\newtheorem{theorem}{Theorem}
\newtheorem{lemma}{Lemma}
\newtheorem{conjecture}{Conjecture}
\newtheorem{observation}{Observation}
\newtheorem{definition}{Definition}
\newtheorem*{conjecture-non}{Conjecture}
\title{Labeled Packing of Cycles and Circuits}
\author{Alice Joffard, Hamamache Kheddouci}
\date{}
\begin{document}
\maketitle

\begin{abstract} In 2013, Duchêne, Kheddouci, Nowakowski and Tahraoui \cite{duchene2013labeled,tahraoui2012coloring} introduced a labeled version of the graph packing problem. It led to the introduction of a new parameter for graphs, the $k$-labeled packing number $\lambda^k$. This parameter corresponds to the maximum number of labels we can assign to the vertices of the graph, such that we will be able to create a packing of $k$ copies of the graph, while conserving the labels of the vertices.
\\ \indent The authors intensively studied the labeled packing of cycles, and, among other results, they conjectured that for every cycle $C_n$ of order $n=2k+x$, with $k\geq 2$ and $1\leq x \leq 2k-1$, the value of $\lambda^k(C_n)$ was $2$ if $x$ was $1$ and $k$ was even, and $x+2$ otherwise.
\\ \indent In this paper, we disprove this conjecture by giving a counter example. We however prove that it gives a valid lower bound, and we give sufficient conditions for the upper bound to hold.
\\ \indent We then give some similar results for the labeled packing of circuits.
\\ \hspace{1cm}\vspace{0.3cm} 
\\ \textbf{Key words:} Packing of graphs, Labeled packing, Cycles, Circuits.
\end{abstract}

\section{Definitions and notations}

\indent We will work with simple graphs, with no multiple edges nor loops. For an undirected graph $G=(V(G),E(G))$, $V(G)$ will represent the set of vertices of $G$, $E(G)$ will represent the set of edges of $G$. For a directed graph $G=(V(G),E(G))$, $V(G)$ will represent the set of vertices of $G$, $E(G)$ will represent the set of arcs of $G$. The \textit{order} $n$ of a graph $G$ will represent its number of vertices, its \textit{size} $m$ will represent its number of edges or arcs, and $\Delta(G)$ will represent its maximal degree. 
\\ \indent $K_n$ will be the complete undirected graph of order $n$, $K_{p,q}$ the complete bipartite graph of partition sizes $p$ and $q$, $P_n$ the path of order $n$, $C_n$ the cycle of order $n$, and $S_n$ the star of order $n$. $\overleftrightarrow{K_n}$ will be the \textit{complete directed graph} of order $n$, meaning that its set of arcs contains all the possible arcs from one of its vertices to another, and $\overrightarrow{C_n}$  will be the \textit{circuit} of order $n$, that is the cyclic orientation of the cycle $C_n$.
\\ \indent $G\cup H$, the \textit{union} of $G$ and $H$, will represent the graph $G\cup H=(V(G)\cup V(H),E(G)\cup E(H))$. The complement of $G$ will be denoted by $\overline{G}$. The $k_{th}$ power $G^k$ of a undirected graph $G$ will correspond to the graph $G$, with the addition of edges between any pair of vertices that have a path of length less or equal to $k$ between them. 
\\ \indent For $p, q$ and $r$ any integers, $p \mod q$ will denote the remainder of the euclidean division of $p$ by $q$, and $p\equiv r\mod q$ is equivalent to $p\mod q=r\mod q$.

\section{Introduction}
The graph packing problem has been widely studied in the literature, in many different ways. The general idea of the problem is  to find sufficient conditions on a set of graphs ($H_1$,$H_2$,...,$H_k$) or on a graph $G$, such that ($H_1$,$H_2$,...,$H_k$) will admit a packing in $G$, a packing being defined in the following way:

\begin{definition}
A packing of ($H_1$,$H_2$,...,$H_k$) in $G$ is a set of injections $\alpha_i: V(H_i) \rightarrow V(G), i=1,...,k$, such that $\alpha_i^*(E(H_i)) \cap \alpha_j^*(E(H_j)) = \emptyset$
for $i\neq j$  where $\alpha_i^*:E(H_i)\rightarrow E(G)$ is induced by $\alpha_i$.
\end{definition}

\bigbreak

If for all $i \in [1,k], |V(H_i)|=|V(G)|=n$, the $k$ injections are actually bijections. In the case of a $k$\textit{-placement}, where the vertices of the same graph $H$ are sent $k$ times to the vertices of $K_n$, we can send the images of the vertices of $H$ in the vertices of $H$ instead of the ones of $K_n$. Thus, the bijections are actually permutations on the vertices of $H$. In this context, we call a \textit{fixed point} any vertex $\alpha \in V(G)$ such that for all $i \in [1,k], \sigma_i(\alpha)=\alpha$.

\bigbreak 

The first results concerning the problem of graph packing focused on the packings into the complete graph $K_n$. In this case, the only conditions for the injections to form a packing is that the induced images of the edges should not intersect. Spencer and Sauer \cite{sauer1978edge} got first interested in the packing of two graphs, and found two sufficient conditions on the graphs for them to admit a packing in $K_n$.

\begin{theorem} \cite{sauer1978edge}  If $|E(H_1)|.|E(H_2)|\leq \dbinom{n}{2}$, then $(H_1,H_2)$ pack into $K_n$.
\end{theorem}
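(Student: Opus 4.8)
The plan is to use the first-moment method on a uniformly random bijection, which is the standard route for this particular statement (Sauer and Spencer's original argument instead proceeds by taking a placement minimizing the number of edge-conflicts and improving it, but the probabilistic proof is shorter here). First I would reduce to the case $|V(H_1)| = |V(H_2)| = n$ by padding the smaller graph with isolated vertices; this changes neither the edge counts nor the existence of a packing, so it is harmless. Then, identifying $V(H_1)$, $V(H_2)$ and $V(K_n)$ all with $\{1,\ldots,n\}$, I would note that a packing of $(H_1,H_2)$ into $K_n$ is the same thing as a permutation $\sigma$ of $\{1,\ldots,n\}$ with $\sigma^*(E(H_1)) \cap E(H_2) = \emptyset$, where $\sigma^*$ sends an edge $\{u,v\}$ to $\{\sigma(u),\sigma(v)\}$: given such a $\sigma$ one takes $\alpha_2 = \mathrm{id}$ and $\alpha_1 = \sigma$, and conversely $\sigma = \alpha_2^{-1}\circ\alpha_1$ does the job.

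Next I would pick $\sigma$ uniformly at random among all $n!$ permutations and estimate the number of conflicts. For a fixed pair $e_1 = \{a,b\} \in E(H_1)$, $e_2 = \{c,d\} \in E(H_2)$, exactly $2\,(n-2)!$ permutations satisfy $\sigma^*(e_1) = e_2$, so this event has probability $\tfrac{2(n-2)!}{n!} = \binom{n}{2}^{-1}$. Letting $X(\sigma)$ be the number of pairs $(e_1,e_2) \in E(H_1)\times E(H_2)$ with $\sigma^*(e_1)=e_2$, linearity of expectation gives $\mathbb{E}[X] = |E(H_1)|\cdot|E(H_2)| \big/ \binom{n}{2}$. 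When this quantity is below $1$ — i.e. when the product of the two edge counts is strictly less than $\binom{n}{2}$ — the nonnegative integer random variable $X$ must take the value $0$ for some $\sigma$, and that $\sigma$ is precisely a packing.

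The only delicate point is the equality case $|E(H_1)|\cdot|E(H_2)| = \binom{n}{2}$: there the computation only yields $\mathbb{E}[X]\le 1$, which is not enough, and in fact the statement genuinely needs the strict inequality here — take $H_2 = K_n$ and $H_1$ a single edge (plus isolated vertices): the product equals $\binom{n}{2}$, yet $\sigma^*$ always maps that edge to an edge of $K_n$, so $X\equiv 1$ and no packing exists. Accordingly I would prove the theorem with the hypothesis $|E(H_1)|\cdot|E(H_2)| < \binom{n}{2}$, the argument above being otherwise untouched. I do not anticipate a genuine obstacle: the whole proof is the one-line probability computation $\binom{n}{2}^{-1}$ together with the remark that a nonnegative integer variable with mean less than $1$ must vanish somewhere; the reduction to equal vertex counts and to a single permutation is pure bookkeeping.
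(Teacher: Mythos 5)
The paper does not prove this statement at all -- it is quoted from Sauer and Spencer's paper as background -- so there is no internal proof to compare against; judged on its own, your argument is correct. The reduction to $|V(H_1)|=|V(H_2)|=n$ by adding isolated vertices, the identification of a packing with a permutation $\sigma$ satisfying $\sigma^*(E(H_1))\cap E(H_2)=\emptyset$, the computation that a fixed edge is mapped onto a fixed edge with probability $2(n-2)!/n!=\binom{n}{2}^{-1}$, and the first-moment conclusion are all sound; linearity of expectation needs no independence, so nothing is hidden there. Your remark about the boundary case is also well taken: as transcribed in this paper with ``$\leq$'', the statement is literally false, since $H_1$ a single edge (plus isolated vertices) and $H_2=K_n$ satisfy $|E(H_1)|\cdot|E(H_2)|=\binom{n}{2}$ but clearly do not pack; the theorem of Sauer and Spencer is stated with strict inequality, which is exactly what your expectation argument delivers, so your proof establishes the correct form of the cited result rather than the (slightly mis-stated) form in the text. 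The only caveat is that what you prove is therefore the strict-inequality version, not the sentence as printed -- which is the right thing to do, but worth saying explicitly if this were to replace the citation.
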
 

\begin{theorem} \cite{sauer1978edge}   If $2\Delta(H_1) \Delta(H_2)< n$, then $(H_1,H_2)$ pack into $K_n$.
\end{theorem}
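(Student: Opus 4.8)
The plan is to run the classical extremal ``swapping'' argument. First I would reduce to the case $|V(H_1)|=|V(H_2)|=n$ by adding isolated vertices to the smaller graph(s): this changes neither $\Delta(H_1)$ nor $\Delta(H_2)$, and a packing of the enlarged graphs into $K_n$ restricts to a packing of the originals. Identifying $V(K_n)$ with $V(H_1)$, we may take $\alpha_1$ to be the identity, so that $\alpha_1^*(E(H_1))=E(H_1)\subseteq E(K_n)$; it then remains to produce a bijection $\sigma\colon V(H_2)\to V(H_1)$ such that no edge of $H_2$ is sent to an edge of $H_1$. For a bijection $\sigma$, call an edge $uv\in E(H_2)$ \emph{bad} if $\{\sigma(u),\sigma(v)\}\in E(H_1)$, and let $\mu(\sigma)$ be the number of bad edges; the goal is a $\sigma$ with $\mu(\sigma)=0$. (If $H_1$ or $H_2$ has no edge, $\mu\equiv 0$ and we are done, so assume both have edges.)

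Next, choose $\sigma$ minimizing $\mu(\sigma)$ and suppose for contradiction that $\mu(\sigma)\geq 1$; fix a bad edge $ab\in E(H_2)$. I would modify $\sigma$ by swapping the images of $b$ and a single vertex $c$, i.e. $\sigma'(b)=\sigma(c)$, $\sigma'(c)=\sigma(b)$, and $\sigma'=\sigma$ elsewhere. Only edges of $H_2$ incident to $b$ or to $c$ change image. Writing $\Delta_i=\Delta(H_i)$, swapping creates a new bad edge at $b$ only if $\sigma(c)\in\bigcup_{y\in N_{H_2}(b)}N_{H_1}(\sigma(y))$, which is at most $\Delta_1\Delta_2$ values of $c$; and it creates a new bad edge at $c$ only if $c$ has an $H_2$-neighbour in the set $\sigma^{-1}(N_{H_1}(\sigma(b)))$ (of size $\le\Delta_1$), which is again at most $\Delta_1\Delta_2$ values. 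If $c$ avoids both families and $c\neq a$, then no edge is bad under $\sigma'$, so every changed edge contributes at most $0$ to $\mu(\sigma')-\mu(\sigma)$, except the edge $ab$: it was bad under $\sigma$, and its new image $\{\sigma(a),\sigma(c)\}$ is non-bad by the first condition applied with $y=a$, so it contributes $-1$. Hence $\mu(\sigma')\le\mu(\sigma)-1$, contradicting minimality.

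The one delicate point — and exactly where the hypothesis $2\Delta_1\Delta_2<n$ is consumed — is the count of forbidden values of $c$. A crude bound gives $2\Delta_1\Delta_2+1$ of them (the two families, plus $c=a$), which would only force $2\Delta_1\Delta_2\le n-2$. The saving is the observation that, since $ab\in E(H_2)$ and $\{\sigma(a),\sigma(b)\}\in E(H_1)$, the vertex $b$ lies in \emph{both} forbidden families (take $y=a$ for the first; note $a\in\sigma^{-1}(N_{H_1}(\sigma(b)))$ and $a\in N_{H_2}(b)$ for the second). Their union therefore has size at most $2\Delta_1\Delta_2-1$, so together with $c=a$ at most $2\Delta_1\Delta_2<n$ vertices are excluded; a legal $c$ exists, giving the contradiction. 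Thus $\mu(\sigma)=0$ and $(H_1,H_2)$ pack into $K_n$. I expect this last bookkeeping step — spotting the double-counted vertex so that ``$<n$'' rather than ``$\le n-O(1)$'' suffices — to be the main obstacle; the padding reduction, the choice of the transposition move, and the verification $\mu(\sigma')\le\mu(\sigma)-1$ are all routine once the framework is in place.
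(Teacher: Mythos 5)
Your argument is correct: it is the standard minimal-counterexample/swap proof of the Sauer--Spencer theorem, and the observation that the vertex $b$ lies in both forbidden families is exactly the saving that makes the strict hypothesis $2\Delta_1\Delta_2<n$ suffice. The paper does not prove this statement at all---it only quotes it from Sauer and Spencer---so there is no internal proof to compare against. One cosmetic point: if $bc\in E(H_2)$, that edge keeps the image $\{\sigma(b),\sigma(c)\}$ after the swap and may remain bad, so ``no edge is bad under $\sigma'$'' is slightly too strong as stated; but since such an edge contributes $0$ to $\mu(\sigma')-\mu(\sigma)$, your conclusion $\mu(\sigma')\le\mu(\sigma)-1$ and the resulting contradiction are unaffected.
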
 

\noindent Those two conditions are quite representative of two common approaches in graph packing theory, one that is interested in bounding the size of the graphs, and the other in bounding their maximal degree. Concerning the more general case of the packing of $k$ graphs, Bollobás and Eldridge formulated the following conjecture, still focusing on small sizes graphs:

\begin{conjecture} \cite{bollobas1978packings}
If $|E(H_1)|,|E(H_2)|,...,|E(H_k)|\leq n-k$, then $(H_1,H_2,...,H_k)$ pack in $K_n$.
\end{conjecture}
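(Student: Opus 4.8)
The statement is the Bollob\'as--Eldridge packing conjecture, which (to my knowledge) is still open for $k\geq 3$; so what follows is a strategy of attack together with the point at which it stalls, rather than a complete proof. The case $k=1$ is vacuous. The case $k=2$ \emph{is} a theorem and suggests the template: start from an arbitrary bijection $\sigma\colon V(H_1)\to V(H_2)$, count the edges of $H_2$ that $\sigma$ maps onto edges of $H_1$, and show that whenever this "defect" is positive a single transposition of $\sigma$ strictly decreases it -- \emph{unless} the configuration is one of a short list of dense exceptional pairs, all of which violate $e(H_i)\leq n-2$. The natural hope is to induct on $k$.

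The induction would go as follows. If each $H_i$ has at most $n-k$ edges, then in particular each has at most $n-(k-1)$ edges, so by the inductive hypothesis $H_1,\dots,H_{k-1}$ pack into $K_n$; let $F$ be the union of their images, so $e(F)\leq (k-1)(n-k)$, and set $G=\overline{F}$. It now suffices to pack the single sparse graph $H_k$ (with $e(H_k)\leq n-k$) into the dense graph $G$ on $n$ vertices. This is the familiar "sparse graph into a graph of large average degree" problem, and the plan is: (i) isolate the \emph{heavy} vertices of $F$ -- those of degree much larger than average -- which are few, since $e(F)$ is linear in $n$; (ii) embed the (also few) high-degree vertices of $H_k$ onto low-degree vertices of $F$, and conversely keep the heavy vertices of $F$ as images of low-degree vertices of $H_k$, so that no heavy vertex meets another heavy vertex; (iii) on the remaining low-degree part, finish by a Sauer--Spencer-type exchange argument, or by a greedy/probabilistic embedding -- the purely degree-bounded version of exactly this step is the second theorem of Sauer and Spencer quoted above.

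The main obstacle -- and the reason the conjecture has resisted proof -- is precisely the interaction between the heavy part of $F$ and the heavy part of $H_k$ in step (ii)--(iii). The union of $k-1$ sparse graphs can concentrate its edges so that $\Theta(n)$ vertices have degree $\Theta(k)$, and $H_k$ may itself have $\Theta(n/k)$ vertices of degree $\Theta(k)$; one cannot in general route all the heavy vertices of $H_k$ onto light images while also keeping the heavy images of $F$ light, because there are simply not enough "room" vertices, and the exchange argument then has too many simultaneous conflicts to resolve greedily, while the probabilistic argument loses a constant factor because the relevant bad events are too strongly correlated (a Lov\'asz-Local-Lemma or second-moment bound comes out off by a multiplicative constant). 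A genuine proof of the conjecture would need a new idea to control this heavy--heavy interaction; without one, the above strategy only delivers the conjecture under an extra hypothesis such as $\Delta(H_i)=O(1)$ for all $i$, or $k$ bounded, recovering the known partial results.
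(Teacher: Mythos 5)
This statement is not proved in the paper: it is the Bollob\'as--Eldridge conjecture, quoted from \cite{bollobas1978packings} purely as background, and the paper offers no argument for it. You were right not to claim a proof, and your write-up honestly presents a strategy sketch and the point where it breaks down, which is consistent with the status of the problem. One factual correction: the conjecture is known not only for $k=2$ (Sauer--Spencer \cite{sauer1978edge}) but also for $k=3$ \cite{kheddouci2001packing}, as the paper itself notes; it is open for $k\geq 4$, not $k\geq 3$. Your inductive template (pack $H_1,\dots,H_{k-1}$ first, then embed the sparse $H_k$ into the complement of their union) and your diagnosis of the heavy--heavy interaction as the obstruction are a reasonable account of why the problem is hard, but since the paper contains no proof of this statement there is nothing to compare your approach against.
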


The conjecture has been proven for $k=2$ \cite{sauer1978edge} and $k=3$ \cite{kheddouci2001packing}. For $k\geq 4$, the conjecture remains open and is still the subject of many researches, as it is considered to be one of the most important open problems in graph packing theory. Another very important conjecture, about the packing of two graphs in $K_n$, is also interested on the maximal degree of two graphs:

\begin{conjecture} \cite{bollobas1978packings}
If $(\Delta(H_1)+1)(\Delta(H_2)+1)\leq n+1$, then $(H_1,H_2)$ pack into $K_n$.
\end{conjecture}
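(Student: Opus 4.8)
The plan would be to organize the argument by the value $d := \min(\Delta(H_1), \Delta(H_2))$, since this degree conjecture (attributed to Bollob\'as--Eldridge and, independently, to Catlin) is in fact open for $d \geq 4$ and only $d \leq 3$ is settled; any attempt should first reproduce the known cases cleanly and then try to push one step further. The first move is to recast the problem: a packing of $H_1$ and $H_2$ into $K_n$ is exactly a bijection $\sigma : V(H_1) \to V(H_2)$ with $\sigma^*(E(H_1)) \cap E(H_2) = \emptyset$, i.e.\ an embedding of $H_1$ into the complement $\overline{H_2}$, whose minimum degree is at least $n - 1 - \Delta(H_2)$. Rewriting the hypothesis as $\Delta(H_1) \cdot (\Delta(H_2)+1) \leq n - \Delta(H_2)$ shows it asserts that $\overline{H_2}$ is dense enough, relative to the sparseness of $H_1$, to host a copy of $H_1$ — which places us in Dirac / Hajnal--Szemer\'edi territory.

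For $d \leq 1$ the statement is easy: if $\Delta(H_1) = 0$ any embedding works, and if $\Delta(H_1) = 1$ then $H_1$ is a matching with at most $\lfloor n/2 \rfloor$ edges while $\Delta(H_2) \leq (n-1)/2$, so I would embed its edges one at a time into non-edges of $H_2$, the number of forbidden endpoint-pairs staying below the threshold the hypothesis provides, closing with a short parity check for the final edge. For $d = 2$ (the Aigner--Brandt case) $H_1$ is a disjoint union of paths and cycles and $\Delta(H_2) \leq (n-2)/3$; here I would order the components, embed their vertices consecutively and greedily into $\overline{H_2}$ while reserving a constant-size buffer so that the last two or three vertices closing each cycle always have a legal image, the delicate point being a rotation/exchange argument when the budget gets tight near the end. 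For $d = 3$ I would follow the only route known to work: take a Szemer\'edi regularity decomposition of $\overline{H_2}$, pair up the regular classes appropriately, embed $H_1$ component by component with the Blow-up Lemma, and absorb the exceptional vertices by hand; this already forces $n$ to be large.

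The hard part — and the reason the conjecture is still open after four decades — is exactly the transition to unbounded degree: regularity and absorption arguments degrade badly as $\Delta(H_2)$ grows, the crude edge- and degree-counting inequalities of the Theorem~1 and Theorem~3 type are far too lossy to fill the gap, and no local greedy scheme is known to survive large degrees. Closing the general case would seem to need a genuinely new device — an absorbing structure that tolerates high degree, or a sharper counting/entropy inequality in place of the classical ones — so I would expect such a plan to re-derive $d \leq 3$ and, optimistically, chip at $d = 4$, but not to resolve the conjecture in full.
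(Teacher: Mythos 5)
You have not proved the statement, and you are right not to claim otherwise: what you were asked to prove is the Bollob\'as--Eldridge--Catlin degree conjecture, which the paper itself states only as a cited \emph{conjecture} (attributed to \cite{bollobas1978packings}) in its introductory survey and never proves. There is therefore no proof in the paper to compare your approach against, and your proposal correctly identifies the actual status of the problem: settled for $\min(\Delta(H_1),\Delta(H_2))\leq 3$ (the cases you attribute to elementary arguments, to Aigner--Brandt for $d=2$, and to the regularity/blow-up machinery of Csaba, Shokoufandeh and Szemer\'edi for $d=3$, the latter only for large $n$), and open in general.

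As a self-assessment your plan is sound, but be aware that even the ``easy'' cases you sketch are not complete as written: the $d=2$ case is a substantial theorem (Aigner--Brandt), and your greedy-with-buffer description glosses over exactly the rotation/exchange difficulties that make that proof nontrivial; similarly the $d=3$ outline inherits the large-$n$ restriction of the regularity method, so it does not even give the full conjecture for $d=3$. The genuine gap, as you say yourself, is the absence of any argument for unbounded minimum of the two maximum degrees, and no amount of refinement of the counting bounds quoted earlier in the paper (the Sauer--Spencer conditions) will close it, since those bounds are off by roughly a factor of two from what the conjecture demands. In short: the statement should be treated as an open conjecture, your writeup is an honest survey of partial progress rather than a proof, and nothing in the paper contradicts or supersedes that assessment.
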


\bigbreak

Most of the studies concerning graph packing focused on the conditions on the graphs $H_i$ to pack into the complete graph $K_n$. However, some authors chose a different point of view, which is to fix the graphs $H_1$,$H_2$,...,$H_k$ and search for a minimal size graph $G$ that would make the packing of the $H_i$ in $G$ possible. A great part of this work has focused on the cases where the $H_i$ were trees.

\begin{theorem}\cite{kheddouci2000packing,kaneko2006packing}  For $n>3$, $T$ a tree of order $n$, $T\neq S_n$, there exists a 2-placement $\sigma$ of $T$ such that $\sigma(T) \subset T^3$.
\end{theorem}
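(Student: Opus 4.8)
The plan is to recast the statement metrically and then induct on $n$. A $2$-placement of $T$ is a permutation $\sigma$ of $V(T)$ with $\sigma(u)\sigma(v)\notin E(T)$ whenever $uv\in E(T)$, i.e.\ $d_T(\sigma(u),\sigma(v))\geq 2$ for every edge; and $\sigma(T)\subseteq T^3$ says exactly $d_T(\sigma(u),\sigma(v))\leq 3$ for every edge. So the goal is to produce a permutation $\sigma$ of $V(T)$ with
\[
d_T\bigl(\sigma(u),\sigma(v)\bigr)\in\{2,3\}\qquad\text{for all }uv\in E(T).
\]
Already for $T=P_n$ this needs a small idea: listing the vertices $1,\dots,n$ in a zig-zag order so that consecutive terms differ by $2$ or $3$ provides such a $\sigma$, because $P_n^3$ contains every pair of vertices at distance $2$ or $3$.

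I would run the induction on $n$, the base cases being the finitely many non-star trees with $4\leq n\leq n_0$ for a small constant $n_0$, each dispatched by an explicit permutation checked directly. For the inductive step, fix a longest path $u_0u_1\cdots u_d$ of $T$; since $T\neq S_n$ and $n\geq 4$, its length $d=\mathrm{diam}(T)$ is at least $3$. Maximality of the path forces every neighbour of $u_1$ other than $u_2$ to be a leaf, so the structure at the end of the path falls into a short list of types (for instance: $u_1$ has at least two leaf children; $\deg_T(u_1)=2$ and $\deg_T(u_2)=2$; $\deg_T(u_1)=2$ and $\deg_T(u_2)\geq 3$). In each type I delete from $T$ a small pendant piece $X$ --- a leaf, a pair of sibling leaves, or a pendant $P_2$ --- obtaining a tree $T'=T-X$ on fewer vertices. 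If $T'$ is neither a star nor below the base range, the induction hypothesis yields a permutation $\sigma'$ of $V(T')$ with $d_{T'}(\sigma'(a),\sigma'(b))\in\{2,3\}$ for all $ab\in E(T')$; and if $T'$ is a star or tiny, then $T$ itself lies in a very restricted family (a spider with short legs, a double star, a broom, and the like) to be handled by an explicit placement. The reason a gluing can work at all is that removing a pendant piece leaves all distances inside $T'$ unchanged, $d_{T'}=d_T$ on $V(T')$, so only the handful of pairs involving $X$ or the rewired vertices will ever need re-examination.

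The heart of the proof --- and the step I expect to be the main obstacle --- is the reinsertion of $X$: one modifies $\sigma'$ on a bounded set of vertices near $u_2$, splicing the vertices of $X$ into a single cycle of $\sigma'$ or performing a $2$- or $3$-cycle surgery, to obtain a permutation $\sigma$ of all of $V(T)$, and must then verify that the finitely many newly created pairs again have $T$-distance exactly $2$ or $3$. It is easy to wreck either bound --- a careless splice can map some edge of $T$ to a pair at distance $1$ (a preserved edge, killing the $2$-placement property) or to a pair at distance $\geq 4$ (leaving $T^3$) --- so to guarantee a safe local choice I expect the inductive hypothesis must be strengthened, e.g.\ insisting that $\sigma'$ treat a prescribed leaf, or the image and preimage of the attachment vertex, in a controlled way so as to leave room for the repair around $u_2$. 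Organising the case analysis so that every configuration of $X$ together with the local structure and $\sigma'$-behaviour at $u_2$ admits such a repair is the bulk of the work, with the star and small-tree forms of $T'$ as the last loose ends to clear up by hand.
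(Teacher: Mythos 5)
Your metric reformulation is correct (a $2$-placement with $\sigma(T)\subseteq T^3$ is exactly a permutation with $d_T(\sigma(u),\sigma(v))\in\{2,3\}$ on every edge), and the zig-zag ordering does settle $P_n$. But what you have written is a plan rather than a proof, and the gap you yourself flag is not a technicality: it is where the entire content of the theorem lives. With the unstrengthened inductive hypothesis you get no control whatsoever over how $\sigma'$ behaves near the attachment vertex $u_2$ --- where $\sigma'$ sends it, what is mapped onto it, or whether any vertex at $T$-distance $2$ or $3$ from the relevant images is available for the spliced-in piece $X$ --- so there is no argument that a bounded local surgery always exists, and a careless splice can indeed create a preserved edge or a pair at distance at least $4$. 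Until you (i) state the precise strengthened hypothesis (e.g.\ prescribed behaviour of $\sigma'$ and $\sigma'^{-1}$ on a chosen leaf or on the attachment vertex), (ii) check that this stronger property is itself reproduced by your reinsertion step, and (iii) actually handle the cases where $T'=T-X$ is a star or small --- note these are infinite families (double stars, brooms, spiders with short legs), parametrised by the number of leaves, so they need general constructions, not finitely many hand checks --- the induction cannot be run. In short, the proposal identifies a plausible skeleton but omits the case analysis and explicit constructions that constitute the proof.

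For calibration: the paper you are reading does not prove this statement at all; it is quoted from the cited works of Kheddouci et al.\ and Kaneko--Suzuki. The published proofs do work structurally along a longest path, broadly in the spirit of your outline, but their substance is precisely the careful local constructions and case distinctions that your sketch postpones, so there is no shortcut here that avoids that work.
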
 

\bigbreak

If the reader wants to go into more details concerning the packing of graphs, he is invited to refer to the numerous surveys on the subject \cite{wozniak2004packing,yap1988packing}.
\bigbreak

In 2013, Duchene, Kheddouci, Nowakowski, and Tahraoui introduced the notion of labeled packings of graphs. They gave the following definition of a $p$-labeled packing of $k$ copies of a graph $G$:
\begin{definition} \cite{duchene2013labeled,tahraoui2012coloring}  For $p\geq 1$, $G$ a graph of order less or equal to $n$, and $f$ a mapping from $V(K_n)$ to a set of labels of cardinality $p$, $f$ is a $p$-labeled packing of $k$ copies of $G$ into $K_n$ if and only if for every $i$ in $\{1,2,...,k\}$, there exists an injection $\sigma_i: V(G)\rightarrow V(K_n)$ such that:
$
\\
\left\{
   \begin{array}{ll}
       \mbox{For every } i \neq j, \sigma_i^*(E(G))\cap \sigma_j^*(E(G))=\emptyset, \mbox{ where } \sigma_i^*: E(G)\rightarrow E(K_n) \mbox{ is induced by } \sigma_i\\
      \mbox{For every } v \in V(G), f(\sigma_1(v))=f(\sigma_2(v))=...=f(\sigma_k(v)).\\
   \end{array}
\right.
$
\end{definition}

\noindent Differently said, a $p$-labeled packing of $k$ copies of $G$ is a labeling of the vertices of $K_n$ with exactly $p$ distinct labels, such that there exists a set of $k$ injections from the vertices of $G$ to the vertices of $K_n$ such that the images of the edges of $G$ never overlap in $K_n$, and for any vertex $v$, the $k$ images of $v$ have the same label. It can be seen as a generalization of the problem of graph packing, as for $p=1$, all the vertices of $G$ have the same label and can thus be sent to any other vertex, as it is the case in the classic graph packing.
\\ \indent This definition allowed the authors to introduce a new parameter for a graph $G$, called the $k$-labeled packing number of $G$:

\begin{definition}\cite{duchene2013labeled,tahraoui2012coloring} The $k$-labeled packing number of $G$, denoted by $\lambda^k(G)$, is the largest $p$ such that $G$ admits a $p$-labeled packing of $k$ copies of $G$.
\end{definition}

The authors linked this new parameter to the cycle decomposition of permutations in the following observation:

\begin{observation}\cite{duchene2013labeled,tahraoui2012coloring} The notion of $k$-labeled packing number is closely related to the cycle decomposition of the packings as permutations.\end{observation}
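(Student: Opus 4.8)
The plan is to make this informal statement precise by exhibiting the exact dictionary between labelings that realize a $k$-labeled packing and the orbit structure of an associated family of permutations. First I would reduce to the case $|V(G)|=n$: if $G$ has order strictly less than $n$, pad with isolated vertices, so that every injection $\sigma_i:V(G)\to V(K_n)$ becomes a bijection and, after identifying $V(K_n)$ with $V(G)$, a permutation of $V(G)$. Then, composing on the right with $\sigma_1^{-1}$, I would replace the family $(\sigma_1,\dots,\sigma_k)$ by the normalized family $(\mathrm{id},\pi_2,\dots,\pi_k)$ with $\pi_i=\sigma_i\sigma_1^{-1}$. This substitution is a global relabeling of the vertices of $K_n$, hence does not affect whether the induced edge images overlap, so it is a harmless normalization; it is exactly the step that turns a $k$-labeled packing into a classical $k$-placement together with extra data.

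Next I would translate the two defining conditions of the packing. The edge-disjointness condition $\sigma_i^*(E(G))\cap\sigma_j^*(E(G))=\emptyset$ becomes precisely the statement that $(\mathrm{id},\pi_2,\dots,\pi_k)$ is a $k$-placement of $G$ in the classical sense. For the label condition, writing $w=\sigma_1(v)$, the requirement $f(\sigma_1(v))=\cdots=f(\sigma_k(v))$ for all $v\in V(G)$ is equivalent to $f(w)=f(\pi_i(w))$ for every $w\in V(K_n)$ and every $i\in\{2,\dots,k\}$; that is, $f$ is constant on every orbit of the permutation group $\Gamma=\langle\pi_2,\dots,\pi_k\rangle$ acting on $V(K_n)$. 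Conversely, given any classical $k$-placement $(\mathrm{id},\pi_2,\dots,\pi_k)$ of $G$, any labeling that is constant on the $\Gamma$-orbits and pairwise distinct across distinct orbits satisfies both conditions. Consequently, the largest number of labels compatible with a fixed placement is exactly its number of $\Gamma$-orbits, and $\lambda^k(G)$ equals the maximum of this quantity over all $k$-placements of $G$.

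Finally I would record the cleanest incarnation, the case $k=2$: there $\Gamma=\langle\pi_2\rangle$ is cyclic, its orbits are exactly the cycles in the disjoint-cycle decomposition of the permutation $\pi_2$, and hence $\lambda^2(G)$ is the maximum number of cycles (counting fixed points as $1$-cycles) in the cycle decomposition of a permutation $\sigma$ of $V(G)$ whose associated $2$-placement is edge-disjoint from the identity placement. I do not expect a genuine obstacle, since the statement is in essence a reformulation; the only point requiring care is the normalization step and the bookkeeping when $|V(G)|<n$, so that the phrase \emph{cycle decomposition of the packing} receives an unambiguous meaning, namely the orbit decomposition of $\langle\sigma_i\sigma_1^{-1}:2\le i\le k\rangle$, which is independent of the choice of base index.
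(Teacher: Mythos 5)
Your dictionary is correct, and it rests on the same core idea as the paper's remark: one label per cycle/orbit of the permutations obtained after normalizing by $\sigma_1^{-1}$. Note, though, that the paper records strictly less than you prove: after the observation it only spells out the case $k=2$ and only one direction, namely that a $2$-placement $(\mathrm{id},\sigma_2)$ whose permutation splits into $p$ disjoint cycles yields a $p$-labeled packing, hence $\lambda^2(G)\geq p$. Your version treats general $k$, identifies the label condition with invariance of $f$ under the group $\Gamma=\langle \sigma_i\sigma_1^{-1} : 2\leq i\leq k\rangle$, and obtains the two-sided statement that, for a fixed placement, the maximal number of labels is exactly the number of $\Gamma$-orbits, so that $\lambda^k(G)$ is the maximum orbit count over all $k$-placements; this is a clean strengthening of what is written and is indeed what the paper's lower-bound constructions implicitly use. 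The one point to handle explicitly is your padding step when $|V(G)|<n$: the definition constrains $f$ only on the images $\sigma_i(V(G))$, so uncovered vertices of $K_n$ can receive fresh labels, and your equality with an orbit count on all of $V(K_n)$ is verbatim correct only for spanning placements ($|V(G)|=n$, which is the situation for the cycles and circuits studied here). For $|V(G)|<n$ you should either check that the extensions of the $\sigma_i$ to the padded vertices can be chosen label-consistently (possible, since the label multisets of the complements of the images agree), or phrase the correspondence via the partition of $V(K_n)$ generated by the relations $\sigma_i(v)\sim\sigma_j(v)$, $v\in V(G)$, rather than via orbits of $\Gamma$ on the whole vertex set.
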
 

Indeed, for $k=2$, if $(id,\sigma_2)$ is a packing of $2$ copies of $G$ decomposed into $p$ disjoint cycles, then we know that in $\sigma_2$, the vertices of $G$ are sent to vertices that belong to the same cycle of the permutation, so that we can always associate one label to each of the $p$ cycles. That way, we can obtain a valid $p$-labeled packing of $2$ copies of $G$, so that we have $\lambda^2(G)\geq p$.
\\ \indent The authors, among other results, analyzed the value of $\lambda^k$ for cycles, and proved the following:

\begin{theorem}\cite{duchene2013labeled,tahraoui2012coloring}  Every cycle $C_n$ of order $n\leq 2k$ does not admit any $k$-placement. For every cycle $C_n$ of order $n=2km+x$, $k,m\geq 2$, $x<2k$, we have:
\\$\lambda^k(C_n) = \left\{
    \begin{array}{ll}
        \frac{n}{2}+1 & \mbox{ if } m=2, k>2 \mbox{ and } x\equiv 2 \mod 4\\
        \lfloor \frac{n}{2} \rfloor +m+1 & \mbox{ if } x=2k-1\\
        \lfloor \frac{n}{2} \rfloor +m & \mbox{ otherwise.}\\
    \end{array}
\right.
$
\end{theorem}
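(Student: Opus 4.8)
The plan is to recast the statement as a question about orbits of a permutation group. Since $C_n$ and $K_n$ both have $n$ vertices, each injection $\sigma_i$ in a $k$-placement is a permutation of the vertex set, and a $k$-placement of $C_n$ is exactly a family of $k$ pairwise edge-disjoint Hamiltonian cycles of $K_n$; counting edges, such a family forces $kn\le\binom n2$, i.e. $n\ge 2k+1$, which already proves the first assertion, and since $n=2km+x$ with $m\ge 2$ gives $n\ge 4k>2k+1$, $k$-placements do exist in the relevant range. Replacing each $\sigma_i$ by $\sigma_1^{-1}\sigma_i$ is just a relabelling of $K_n$ and changes neither edge-disjointness nor the number of labels, so I may assume $\sigma_1=\mathrm{id}$. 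The labelling condition $f(\sigma_1(v))=\dots=f(\sigma_k(v))$ then says precisely that $f$ is constant on the orbits of $G:=\langle\sigma_2,\dots,\sigma_k\rangle$, and conversely one colour per orbit is always admissible; hence
\[
\lambda^k(C_n)=\max\{\,r(G):(\mathrm{id},\sigma_2,\dots,\sigma_k)\text{ is a }k\text{-placement of }C_n\,\},
\]
where $r(G)$ is the number of orbits of $G=\langle\sigma_2,\dots,\sigma_k\rangle$. It remains to compute this maximum by matching a construction with a structural bound.

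\emph{Lower bound.} I would fix a maximum independent set $F$ of $C_n$ and build a $k$-placement with $\sigma_i|_F=\mathrm{id}$ for every $i$. Then each copy $\sigma_i(C_n)$ alternates between $F$ and $T:=V(C_n)\setminus F$ and traverses $F$ in the cyclic order of $C_n$, so it is determined by the permutation $\pi_i$ of the ``slots'' of $T$ recording which vertex of $T$ occupies each gap of $F$; a direct check shows the $k$ copies are pairwise edge-disjoint exactly when $\pi_i(j)\notin\{\pi_{i'}(j-1),\pi_{i'}(j),\pi_{i'}(j+1)\}$ for all $i\ne i'$ and all slots $j$. Taking $\pi_1=\mathrm{id}$ and $\pi_2,\dots,\pi_k$ a carefully designed family meeting this constraint while splitting $T$ into $\lfloor|T|/k\rfloor$ orbits (each of size at least $k$), $G$ ends up with $|F|+\lfloor|T|/k\rfloor$ orbits; with $|F|=\lfloor n/2\rfloor$ and $|T|=\lceil n/2\rceil$ this equals $\lfloor n/2\rfloor+m$ in general and $\lfloor n/2\rfloor+m+1$ when $x=2k-1$ (the unique case in which $k$ divides $|T|$). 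In the exceptional case $m=2,\ k>2,\ x\equiv 2\bmod 4$, this design can only be realised with $T$ in a single orbit (the reason surfacing in the upper bound), giving $n/2+1$.

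\emph{Upper bound.} Let $(\mathrm{id},\sigma_2,\dots,\sigma_k)$ be any $k$-placement, $G=\langle\sigma_2,\dots,\sigma_k\rangle$, and $F$ its set of common fixed points. Two facts drive the bound. First, $F$ is an independent set of $C_n$: an edge of $C_n$ joining two vertices of $F$ would appear in every copy $\sigma_i(C_n)$, contradicting edge-disjointness since $k\ge 2$. Second, every $C_n$-neighbour $t$ of a vertex $v\in F$ lies in a $G$-orbit of size at least $k$: the copy $\sigma_i(C_n)$ contributes exactly two edges at $v$, namely $\{v,\sigma_i(t)\}$ and $\{v,\sigma_i(t')\}$ with $t,t'$ the two $C_n$-neighbours of $v$, and these $2k$ edges being pairwise distinct forces $t=\sigma_1(t),\sigma_2(t),\dots,\sigma_k(t)$ to be $k$ distinct vertices, all lying in the orbit of $t$. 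Writing $a=|F|$ and $N$ for the set of $C_n$-neighbours of $F$, the orbit partition of $G$ therefore has $a$ singletons forming an independent set of $C_n$, all $|N|$ vertices of $N$ in orbits of size $\ge k$, and the rest in orbits of size $\ge 2$. Optimising $r(G)$ over all such profiles, using that $|N|$ is at least $2a$ minus the number of length-$1$ gaps of $C_n\setminus F$ while the gap lengths sum to $n-a$, one finds the maximum is attained when $F$ is a maximum independent set; then $N=V(C_n)\setminus F$, so $r(G)\le\lfloor n/2\rfloor+\lfloor\lceil n/2\rceil/k\rfloor$, which is $\lfloor n/2\rfloor+m$, or $\lfloor n/2\rfloor+m+1$ precisely when $x=2k-1$. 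In the exceptional case this count only gives $r(G)\le n/2+2$, and I would rule out equality separately: attaining it would force $F$ to be a maximum independent set and $V(C_n)\setminus F$ to split into exactly two $G$-orbits of sizes in $[k,k+x/2]$, each corresponding (via the alternating-cycle description above) to a pair of edge-disjoint Hamiltonian cycles of a complete bipartite graph, and a parity argument on those cycles — this is where $x\equiv 2\bmod 4$ enters — produces a contradiction, so $\lambda^k(C_n)=n/2+1$ there.

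\emph{Main obstacle.} The delicate part is the upper bound: turning the two structural facts into the exact value requires a careful case analysis of the orbit-size profile with all its floor and ceiling corrections together with a proof that a sub-maximal $F$ never does better, and the exceptional case $m=2,\ k>2,\ x\equiv 2\bmod 4$ needs a separate, somewhat ad hoc parity argument. The lower-bound constructions, although explicit, also demand a not-entirely-routine verification that the chosen family of permutations simultaneously satisfies the edge-disjointness constraint and generates a group with the claimed orbits.
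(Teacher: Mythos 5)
First, note that the paper does not prove this statement at all: it is Theorem 7 quoted from the cited works of Duchêne, Kheddouci, Nowakowski and Tahraoui, so there is no in-paper proof to compare with. Judged on its own, your proposal is a reasonable strategy outline (the reformulation of $\lambda^k$ as the maximal number of orbits of $\langle\sigma_2,\dots,\sigma_k\rangle$ after normalising $\sigma_1=\mathrm{id}$ is sound and matches the paper's Observation 1, and the edge-count argument for the nonexistence of $k$-placements when $n\le 2k$ is fine), but it is not a proof: every step that actually carries the theorem is left as a placeholder. In the lower bound, the family $\pi_2,\dots,\pi_k$ that is simultaneously edge-disjoint and splits $T$ into $\lfloor|T|/k\rfloor$ orbits is never exhibited, and in the exceptional case $m=2$, $k>2$, $x\equiv 2\bmod 4$ you do not show that a $k$-placement realising $n/2+1$ orbits exists at all.

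The upper bound has a more substantive gap. Your two structural facts ($F$ independent; every $C_n$-neighbour of a fixed point lies in an orbit of size $\ge k$) do not by themselves force $r(G)\le\lfloor n/2\rfloor+\lfloor\lceil n/2\rceil/k\rfloor$: when $F$ is not a maximum independent set, vertices interior to long gaps of $C_n\setminus F$ are only constrained to orbits of size $\ge 2$, and the naive profile count $a+\#\{N\text{-orbits}\}+\#\{\text{other orbits}\}$ then exceeds the target unless one exploits further structure; carrying out the claimed "optimisation over all such profiles", with its integrality corrections, is precisely the content of the theorem and is not done. Worse, the exceptional case shows that any bound derived purely from orbit-size profiles cannot be tight, since the true value there is $n/2+1$, strictly below the generic $\lfloor n/2\rfloor+m$; you acknowledge this and invoke an unspecified "parity argument on pairs of edge-disjoint Hamiltonian cycles of a complete bipartite graph", but this argument — where the hypothesis $x\equiv 2\bmod 4$ must do all the work — is asserted, not given. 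As it stands the proposal identifies the right framework and the right difficulties, but the construction, the exact counting, and the exceptional-case exclusion are all missing.
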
 

\bigbreak

\noindent Concerning the left values of $n$, they proposed the following conjecture:

\begin{conjecture}\cite{duchene2013labeled,tahraoui2012coloring}  For every cycle $C_n$ of order $n=2k+x$, $k\geq 2$, $1\leq x \leq 2k-1$, we have:
\\$\lambda^k(C_n) = \left\{
    \begin{array}{ll}
        2 & \mbox{ if } x=1 \mbox{ and } k \mbox{ is even }\\
        x+2 & \mbox{ otherwise.}\\
    \end{array}
\right.
$
\label{THEconjecture}
\end{conjecture}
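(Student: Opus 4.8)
The plan is to recast the problem group\nobreakdash-theoretically, then to produce an explicit labeled packing meeting the claimed bound (the lower bound), and finally to prove the matching upper bound — which I expect to be the delicate point.

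\textbf{Reformulation.} Since $C_n$ has order exactly $n$, each $\sigma_i$ in a $k$-placement is a permutation of $V(K_n)$, and relabeling $V(K_n)$ by $\sigma_1^{-1}$ we may assume $\sigma_1=\mathrm{id}$. The labeling condition $f(\sigma_1(v))=\dots=f(\sigma_k(v))$ then says precisely that $f$ is constant on the orbits of $\Gamma=\langle\sigma_2,\dots,\sigma_k\rangle$ acting on $V(K_n)$, and the optimal $f$ uses one label per orbit. Hence $\lambda^k(C_n)$ is the maximum, over all edge\nobreakdash-disjoint placements $(\mathrm{id},\sigma_2,\dots,\sigma_k)$ of $C_n$, of the number of $\Gamma$-orbits. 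Two facts I would record at once: each $\sigma_i(C_n)$ is a Hamilton cycle of $K_n$, so $H=\bigcup_i\sigma_i(C_n)$ is $2k$-regular and $\overline{H}$ is $(x-1)$-regular; and when $x=1$ a $k$-placement is exactly a decomposition of $K_{2k+1}$ into $k$ Hamilton cycles.

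\textbf{Lower bound.} To get $\lambda^k(C_n)\ge x+2$ I would exhibit an explicit family with the orbit pattern: $x$ common fixed points, plus two orbits splitting the remaining $2k$ vertices as $\{a_0,\dots,a_{k-1}\}$ and $\{b_0,\dots,b_{k-1}\}$, with $\sigma_i$ acting as the shift $a_t\mapsto a_{t+i-1},\ b_t\mapsto b_{t+i-1}$ (indices mod $k$) and fixing the $x$ special vertices. Since $\sigma_2$ is the shift by $1$, $\Gamma$ is the full cyclic shift group, so these orbits are exactly $x+2$ in number. The work is to choose the base cycle $C_n=\sigma_1(C_n)$, threading the two blocks and the fixed vertices à la Walecki, so that all $k$ rotated copies are pairwise edge\nobreakdash-disjoint; this bookkeeping succeeds for $x\ge2$ and for $x=1$ with $k$ odd, and when $x=1$ and $k$ is even one instead uses a smaller construction with a single large orbit (so only two orbits) — the parity of $k$ entering precisely through the edge\nobreakdash-disjointness of the rotational copies.

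\textbf{Upper bound and the main obstacle.} For $\lambda^k(C_n)\le x+2$ the idea is structural: if $\Gamma$ has $p$ orbits $O_1,\dots,O_p$, then in each Hamilton cycle $\sigma_i(C_n)$ a vertex of a size\nobreakdash-$s$ orbit sends at least $2k-(s-1)$ of its (two\nobreakdash-per\nobreakdash-copy) edges out of its orbit, while every vertex of $K_n$ has only $x-1$ edges outside $H$; counting the forced inter\nobreakdash-orbit edges against this scarcity, and using that each $\sigma_i(C_n)$ is a single cycle (so its crossing pattern is connected), should bound the number of small orbits and hence $p$. The real obstacle is that a pure degree/edge count is too weak — it only reproduces the trivial $2k\le n-1$ — so the cyclic structure and the fact that all $\sigma_i$ are permutations of the \emph{same} $C_n$ must be exploited essentially; and, being candid, I do not expect $x+2$ to be the right value in all cases (an even\nobreakdash-$k$ shift construction with even gcd of shift amounts plausibly yields extra orbits without breaking edge\nobreakdash-disjointness, giving a counterexample), so the defensible target is $\lambda^k(C_n)=x+2$ under extra hypotheses on $k$ and $x$, with a separate parity argument pinning the value $2$ in the $x=1$, $k$ even case, and the conjecture as stated corrected by an explicit counterexample.
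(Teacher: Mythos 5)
You cannot prove this statement, and neither does the paper: Conjecture \ref{THEconjecture} is exactly what this paper refutes. Your hedge at the end is therefore the right call, and it matches the paper's structure precisely: the lower bound is proved in full generality (Theorem \ref{borne_sup_theorem}), the upper bound $x+2$ fails in general, and only sufficient conditions for it survive. Your lower-bound sketch is essentially the paper's construction: $x$ fixed points plus two rotation classes of size $k$, with $\sigma_j$ acting as the shift $v_i\mapsto v_{i+2(j-1)\bmod 2k}$ on the non-fixed vertices and the base cycle threaded Walecki-style so the rotated copies are edge-disjoint (cases split on the parity of $k$ and the range of $x$), and, when $x=1$ with $k$ even, the degenerate variant with one fixed point and a single orbit of size $2k$, giving only $2$ labels. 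So on that half you and the paper coincide, modulo the case-by-case bookkeeping the paper actually carries out.

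Two concrete corrections on the other half. First, your guessed mechanism for a counterexample (even $k$, shifts with even gcd creating extra orbits) is not how the conjecture actually fails: the paper's counterexample has \emph{odd} $k=9$, $x=3$, $n=21$, where a packing exists whose label classes are seven classes of size $3$ (Figure \ref{counterexamplefigure}), so $\lambda^9(C_{21})\geq 7 > x+2=5$. The failure comes from many small label classes of size $\frac{n}{p}$, not from a parity/gcd effect in a shift construction. Second, the conditional upper bounds are not obtained by the per-vertex crossing-edge count you sketched (and rightly judged too weak), but by a count centred on the \emph{smallest} label class: if $q$ is the minimum class size, each of the roughly $2q$ cycle edges leaving that class must be replicated $k$ times into edges between the corresponding label classes, which forces a set of at least $2k$ vertices carrying at most $2q$ labels (Lemma \ref{2klemma}); hence $p\leq 2q+\frac{x}{q}$, so $p\leq x+2$ whenever $q=1$ or $q\leq \frac{x}{2}$ (Theorem \ref{qtheorem}), and unconditionally $\lambda^k(C_n)\leq x+2$ when $x\geq\sqrt{4k-2}$ (Theorem \ref{xtheorem}); a separate parity argument pins $p\leq 2$ in the $x=1$, $k$ even case when a fixed point is present. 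If you want a defensible statement, prove the lower bound as you planned and replace the claimed equality by these conditional upper bounds plus the explicit counterexample.
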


\bigbreak

\noindent To support this conjecture, they gave the following results, that focus on some particular values of $k$ and $x$:

\begin{theorem}\cite{duchene2013labeled,tahraoui2012coloring}
Let $C_n$ be the cycle of order $n=2k+x$, $k \geq 2$, $2k-3 \leq x \leq 2k-1$ and $(k,n) \neq (2,5)$. We have $\lambda^k(C_n) = x+2$.
\label{xbetween}
\end{theorem}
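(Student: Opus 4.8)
\noindent The proof splits into the lower bound $\lambda^k(C_n)\ge x+2$ and the upper bound $\lambda^k(C_n)\le x+2$. In both directions I would first record the following reformulation, which extends the Observation above from $k=2$ to general $k$: a $p$-labeled packing of $k$ copies of $C_n$ is the same thing as a surjective colouring $g:\{v_0,\dots,v_{n-1}\}\to\{1,\dots,p\}$ of the consecutive vertices of $C_n$, together with $k$ pairwise edge-disjoint Hamiltonian cycles $H_1,\dots,H_k$ of $K_n$, where $H_i=\sigma_i(C_n)$ for a bijection $\sigma_i$ such that $g\circ\sigma_i=g$; equivalently, the colour classes are exactly the orbits of $\Gamma=\langle\sigma_2,\dots,\sigma_k\rangle$ (so each $H_i$, traversed along $\sigma_i$, spells one and the same cyclic colour word), and $\lambda^k(C_n)$ is the largest possible number of $\Gamma$-orbits over all $k$-placements.

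For the lower bound I would exhibit an explicit such configuration with exactly $x+2$ orbits. Since $n-(x+2)=2k-2$ in all three cases $x\in\{2k-3,2k-2,2k-1\}$, the ``overhead'' to distribute among the non-trivial orbits is $2k-2$. The naive choice of $2k-2$ orbits of size $2$ together with $x-2k+4\in\{1,2,3\}$ fixed points forces every $\sigma_i$ to be a subproduct of the $2k-2$ transpositions pairing up each doubleton; one then checks that $\sigma_i(C_n)$ and $\sigma_j(C_n)$ can be edge-disjoint only when the symmetric difference of the two transposition-sets supports a vertex cover of $C_n$, which is impossible for small $k$ (e.g.\ $k=3$). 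Hence the construction must use at least one larger orbit — e.g.\ one orbit of size $3$ or more, whose extra internal freedom lets $\sigma_i$ permute it nontrivially — and I would pick its size so that the internal edge-count constraint $k\cdot e_j\le\binom{m_j}{2}$ holds (with $e_j$ the number of edges of $C_n$ inside orbit $j$; see below), place the orbits around $C_n$ at carefully chosen positions, and take $H_1=C_n$ together with $k-1$ further Hamiltonian cycles obtained from these $\sigma_i$, verifying pairwise edge-disjointness by tracking the multiset of edge-lengths of each $H_i$ in a cyclic labelling of $V(K_n)$. The parity of $k$ and $n$, and the exceptional pair $(k,n)=(2,5)$ (where $C_5$ is too small for two edge-disjoint Hamiltonian cycles with the required colour structure), enter exactly here; the three values of $x$ are handled by three closely related placements.

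For the upper bound, assume for contradiction an $(x+3)$-labeled packing, with colour $j$ used $m_j$ times, so $\sum_j m_j=n=2k+x$ and $\sum_j(m_j-1)=2k-3$; in particular there are at most $2k-3$ non-singleton colours, hence at least $(x+3)-(2k-3)=x-2k+6\ge 3$ singleton colours. The key constraint is that each $\sigma_i$ preserves the $\Gamma$-orbits, so for every copy $H_i$ the number of its edges lying inside colour class $j$ equals $e_j$, the number of edges of $C_n$ with both ends coloured $j$, independently of $i$; as the $H_i$ are edge-disjoint this gives $k\cdot e_j\le\binom{m_j}{2}$, whence $e_j=0$ as soon as $m_j\le 2$. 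Now I would exploit the rigidity of singleton colours: their vertices occupy fixed positions in every $H_i$, so consecutive singletons are joined, in each of the $k$ copies, by edge-disjoint paths of one and the same fixed length through intermediate vertices whose colours are prescribed by the colour word. Since there are at least $3$ singletons (and more for larger $x$), some such arc is short, and bounding the number of vertices it can ever use (at most twice the number of non-singleton colours meeting it, plus its two singleton endpoints) against the $k$ edge-disjoint paths it must carry forces $k$ to be too small, a contradiction. The case $x=2k-1$, where $x+2=\lfloor n/2\rfloor+2$, is the tightest; for $x=2k-2$ and $x=2k-3$ one has $x+2=\lfloor n/2\rfloor+1$ and the final counting is milder.

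I expect the main obstacle to be the lower-bound construction together with its edge-disjointness verification: one cannot use the uniform ``all orbits of size $\le 2$'' pattern, and balancing the size of the larger orbit(s) against both the internal edge-count constraint and the need to fit $k$ pairwise edge-disjoint spelling Hamiltonian cycles — while respecting the parity of $k$ and $n$ — is the delicate step. The upper bound, by contrast, should follow from the per-class inequality above plus a bounded case analysis driven by the abundance of forced singleton colours.
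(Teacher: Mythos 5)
This statement is quoted from Duch\^ene, Kheddouci, Nowakowski and Tahraoui and is not reproved in the present paper, but the paper does contain the two ingredients one would compare against: the lower bound $\lambda^k(C_n)\geq x+2$ is a special case of Theorem \ref{borne_sup_theorem} (the cases $k\leq x\leq 2k-1$ of its proof), and the upper bound for this range of $x$ follows, for $k\geq 4$, from Lemma \ref{2klemma} together with Theorems \ref{qtheorem} and \ref{xtheorem}. Measured against that, your proposal has genuine gaps in both directions. The lower bound is the substantive half, and you never produce a construction: you argue informally that ``all orbits of size at most $2$'' cannot work and that some larger orbit is needed, but you give neither the colour classes, nor the permutations $\sigma_j$, nor any edge-disjointness verification, and you yourself flag this as the main unresolved obstacle. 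The construction that actually works is quite different from your ``add one orbit of size $3$ or more'' picture: it uses $x$ fixed points interleaved with \emph{two} classes of size $k$ which are rotated in each copy (in opposite directions when $k$ is odd, Walecki-style when $k$ is even), and the disjointness check is a short modular-arithmetic computation. Without something of this kind the inequality $\lambda^k(C_n)\geq x+2$ is simply not established.

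For the upper bound, your inequality $k\,e_j\leq\binom{m_j}{2}$ (and its bipartite analogue) is correct and is indeed the engine of the known argument, but the completion you sketch does not go through. The dichotomy ``at least three singleton colours, hence some singleton-to-singleton arc is short, hence a capacity contradiction'' fails: for $x=2k-3$ one can have exactly $3$ singletons and $2k-3$ classes of size $2$, the three arcs then all have length roughly $(4k-3)/3$, and your local capacity bound $kd\leq\binom{|S'|}{2}$ is comfortably satisfied, so no contradiction arises from the shortest arc; moreover your estimate ``at most twice the number of non-singleton colours meeting the arc'' tacitly assumes those classes have size $2$, whereas the whole excess $2k-3$ could sit in a single class. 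What closes this case is a global count, not a local one: either some class has size $q\leq x/2$ (or is a singleton), and then the neighbourhood argument of Lemma \ref{2klemma} gives $p\leq 2q+x/q\leq x+2$ (Theorem \ref{qtheorem}); or every class has size at least $(x+1)/2$, and then $(x+1)(x+3)/2\leq 2k+x$, which is false for $x\geq 2k-3$ once $k\geq 4$ (Theorem \ref{xtheorem}), with $k=2,3$ and the excluded pair $(k,n)=(2,5)$ handled by separate small-case analysis (equivalently, one can sum your per-pair capacity bounds over the whole cycle as in Theorem \ref{max_nb_edges}). So your outline points in the right general direction -- orbit/colour counting -- but neither direction is actually carried out, and the local ``short arc'' step would have to be replaced by this global counting.
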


\begin{theorem}\cite{duchene2013labeled,tahraoui2012coloring}
Let $C_n$ be the cycle of order $n=2k+x$, $k$ a power of $2$, $x=1$. We have $\lambda^k(C_n) = 2$.
\label{x1_kpowerof2}
\end{theorem}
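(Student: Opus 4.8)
\emph{Proof plan.} The plan is to exploit the fact that, for $n=2k+1$, a $k$-placement of $C_n$ is forced to be a \emph{perfect} decomposition of $K_n$ into Hamiltonian cycles, and then to read off a divisibility obstruction from the sizes of the colour classes. Write $k=2^t$ with $t\ge 1$ and $n=2k+1$. Since $|V(C_n)|=|V(K_n)|=n$, every injection occurring in a labeled packing is actually a bijection; and since $k\,|E(C_n)| = kn = k(2k+1) = \binom{2k+1}{2} = |E(K_n)|$, the images $H_i:=\sigma_i(C_n)$ of any labeled packing are $k$ pairwise edge-disjoint Hamiltonian cycles that together use \emph{every} edge of $K_n$. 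This exact count is precisely what distinguishes the case $x=1$ from the others.

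For the lower bound $\lambda^k(C_n)\ge 2$ I would colour one vertex of $C_n$ with label $2$ and all the others with label $1$, and likewise distinguish one vertex of $K_n$. A Hamiltonian decomposition of $K_{2k+1}$ into $k$ cycles exists (a classical fact, equivalent to $C_{2k+1}$ admitting a $k$-placement, which is already implicit above); each such cycle meets the distinguished vertex exactly once, so for each $i$ one can base and orient $H_i$ at that vertex and let $\sigma_i$ match it with the label-$2$ vertex of $C_n$. This gives a valid $2$-labeled packing.

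For the upper bound, assume for contradiction that $\lambda^k(C_n)\ge 3$. Merging labels in pairs, down to exactly three nonempty classes (this operation preserves edge-disjointness and the condition $f\circ\sigma_1=\dots=f\circ\sigma_k$, since it amounts to post-composing $f$ with a surjection), produces a $3$-labeled packing: a colouring $f$ of $V(K_n)$ with nonempty classes of sizes $n_1,n_2,n_3\ge 1$ summing to $2k+1$, together with bijections $\sigma_1,\dots,\sigma_k$ giving a Hamiltonian decomposition as above. The crux is the following observation: writing $v_0,v_1,\dots,v_{n-1}$ for the cyclic order on $C_n$, the $j$-th vertex of $H_i$ has colour $f(\sigma_i(v_j))$, which is independent of $i$ because $f\circ\sigma_i$ is; hence \emph{all} the cycles $H_i$ carry the identical cyclic pattern of colours. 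Consequently every $H_i$ contains the same number $e_a$ of edges with both endpoints in colour class $a$, and since the $H_i$ partition $E(K_n)$ we get $k\,e_a=\binom{n_a}{2}$, i.e.\ $2k\mid n_a(n_a-1)$ for each $a$.

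Finally I would close the argument arithmetically. Exactly one of $n_a,\,n_a-1$ is even, so $2k=2^{t+1}\mid n_a(n_a-1)$ forces $2^{t+1}\mid n_a$ or $2^{t+1}\mid n_a-1$; together with $1\le n_a\le 2k+1$ this leaves only $n_a\in\{1,\,2k,\,2k+1\}$. But no three elements of $\{1,2k,2k+1\}$ can sum to $2k+1$ once $k\ge 2$: the only sum below $2k+1$ is $1+1+1=3$, and every other sum is at least $2k+2$. This contradiction gives $\lambda^k(C_n)\le 2$, and with the lower bound, $\lambda^k(C_n)=2$. The step I expect to be most delicate in writing up is not the arithmetic but the structural reduction: spelling out carefully that merging labels keeps a valid labeled packing, that the decomposition one obtains is genuinely perfect (so the monochromatic edges of each class are all used exactly once), and above all that the colour pattern of $H_i$ is literally the same cyclic word for every $i$ — that rigidity is the entire engine of the proof.
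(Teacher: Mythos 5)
Your argument is correct, and it is worth noting that the present paper does not actually prove this statement — it only quotes it from Duchêne, Kheddouci, Nowakowski and Tahraoui — so your proof is a self-contained alternative rather than a reproduction. Your lower bound is exactly the paper's construction in case 2 of Theorem \ref{borne_sup_theorem} (one label-$2$ vertex pinned to a common vertex of a Walecki decomposition of $K_{2k+1}$, everything else labelled $1$). Your upper bound, however, bypasses Lemma \ref{2klemma} entirely and is closer in spirit to the counting in Theorem \ref{upper_x=1}: since $k\,n=\binom{n}{2}$, any $k$-placement of $C_{2k+1}$ is a perfect Hamiltonian decomposition, and the labeled-packing condition $f\circ\sigma_1=\dots=f\circ\sigma_k$ makes every copy carry the same cyclic colour word, so each colour class of size $n_a$ satisfies $k e_a=\binom{n_a}{2}$, i.e.\ $2k\mid n_a(n_a-1)$; with $2k$ a power of two and coprimality of $n_a$ and $n_a-1$ this forces $n_a\in\{1,2k,2k+1\}$, which is incompatible with three nonempty classes summing to $2k+1$ for $k\ge 2$. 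The delicate points you flag are indeed the right ones, and they all check out: merging labels preserves a labeled packing (monotonicity in $p$), the decomposition is perfect so the monochromatic edge counts are exact rather than the floor inequalities of Theorem \ref{max_nb_edges}, and the colour-pattern rigidity is precisely the labeled-packing condition. Two small remarks: your exact-divisibility constraint $2k\mid n_a(n_a-1)$ is strictly stronger than what Theorem \ref{max_nb_edges} records for $x=1$ (and, unlike your power-of-two conclusion, it applies for every $k$, which could be of independent use); and your hypothesis $t\ge 1$ is needed — for $k=1$ the final arithmetic step fails, consistently with $\lambda^1(C_3)=3$ — which matches the standing assumption $k\ge 2$ in the paper.
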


\begin{theorem}\cite{duchene2013labeled,tahraoui2012coloring}
Let $C_n$ be the cycle of order $n=2k+x$, $k$ prime, $x=1$. We have $\lambda^k(C_n) \leq x+2$.
\label{x1_kprime}
\end{theorem}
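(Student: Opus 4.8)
The plan is to exploit the fact that for $x=1$ the edge count is tight: $|E(K_n)|=\binom{2k+1}{2}=k(2k+1)=kn$, which is exactly the total number of edges of $k$ disjoint copies of $C_n$. Hence any $k$-placement of $C_n$ into $K_n$ is not merely a collection of $k$ edge-disjoint Hamiltonian cycles but in fact a \emph{decomposition} of $K_n$ into Hamiltonian cycles $H_1,\dots,H_k$. (If $C_n$ admits no $k$-placement the statement is vacuous, so we may assume a $p$-labeled packing exists.)

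First I would normalize a given $p$-labeled packing $(f,\sigma_1,\dots,\sigma_k)$. Since $|V(C_n)|=|V(K_n)|=n$, each $\sigma_i$ is a bijection, so replacing $f$ by $f\circ\sigma_1$ and each $\sigma_i$ by $\sigma_1^{-1}\sigma_i$ we may assume $\sigma_1=\mathrm{id}$; then $H_1$ is the standard cycle $(0,1,\dots,n-1)$ on $V(K_n)=\mathbb{Z}_n$ and the packing condition becomes $f\circ\sigma_i=f$ for all $i$. Write the $p$ label classes as $V_1,\dots,V_p$ with $|V_a|=n_a$ and $\sum_a n_a=n$.

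The key step is a rigidity observation about the labels. For each $i$, the edge $\{\sigma_i(j),\sigma_i(j+1)\}$ of $H_i$ joins a vertex of label $f(\sigma_i(j))=f(j)$ to a vertex of label $f(\sigma_i(j+1))=f(j+1)$, so as $j$ runs over $\mathbb{Z}_n$ the multiset of unordered label-pairs carried by the edges of $H_i$ equals $\{\,\{f(j),f(j+1)\}:j\in\mathbb{Z}_n\,\}$, which does not depend on $i$. Let $e_{ab}$ be the number of $j$ with $\{f(j),f(j+1)\}=\{a,b\}$ (so $e_{aa}$ counts indices with $f(j)=f(j+1)=a$). Then each $H_i$ contains exactly $e_{ab}$ edges between $V_a$ and $V_b$ and exactly $e_{aa}$ edges inside $V_a$; summing over the decomposition yields $k\,e_{ab}=n_a n_b$ for $a\neq b$ and $k\,e_{aa}=\binom{n_a}{2}$.

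Finally I would invoke primality. From $k\mid n_a n_b$ and $k$ prime we get $k\mid n_a$ or $k\mid n_b$ for every pair $a\neq b$, so at most one class has size not divisible by $k$; since $\sum_a n_a=2k+1\equiv 1\pmod k$, exactly one class (say $V_1$) has $k\nmid n_1$ (with $n_1\geq 1$) while $n_a\geq k$ for the remaining $p-1$ classes. Hence $2k+1=\sum_a n_a\geq 1+(p-1)k$, forcing $p\leq 3=x+2$, i.e.\ $\lambda^k(C_n)\leq x+2$. I expect the only genuine obstacle to be spotting the two structural facts — that for $x=1$ the placement is a perfect Hamiltonian decomposition, and that the labeling constraint makes the cyclic sequence of labels the same along every $H_i$ — after which the argument is a one-line divisibility estimate in which the hypothesis "$k$ prime" is used exactly once.
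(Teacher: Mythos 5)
Your proof is correct. There is nothing in this paper to compare it against line by line: Theorem~\ref{x1_kprime} is stated with a citation to \cite{duchene2013labeled,tahraoui2012coloring} and no proof of it is reproduced here, so your argument has to stand on its own, and it does. For $x=1$ we indeed have $kn=\binom{n}{2}$, so any $k$-placement is a decomposition of $K_n$ into Hamiltonian cycles; the normalization $\sigma_1=\mathrm{id}$ is legitimate (conjugating everything by the bijection $\sigma_1$ preserves edge-disjointness and the number of labels, and is not even strictly needed); the label condition $f(\sigma_i(v))=f(\sigma_1(v))$ makes the cyclic sequence of labels, hence the multiset of label pairs carried by the $n$ edges, identical in every copy, so summing over the decomposition gives the exact identities $k\,e_{ab}=n_an_b$ and $k\,e_{aa}=\binom{n_a}{2}$; primality of $k$ then allows at most one class of size not divisible by $k$, and since $n\equiv 1 \pmod k$ exactly one such class exists, giving $2k+1\geq 1+(p-1)k$ and $p\leq 3=x+2$. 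Relative to what this paper does prove, your identities are an exact, decomposition-based sharpening of the floor inequalities of Theorem~\ref{max_nb_edges} (which hold for arbitrary packings), and your starting observation $k(2k+1)=\binom{2k+1}{2}$ is the same one that drives the proof of Theorem~\ref{upper_x=1}; the extra ingredient, with no counterpart in the paper, is the rigidity of the label sequence along every copy, which converts the edge counting into divisibility constraints and is exactly where the hypothesis that $k$ is prime is consumed.
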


\begin{theorem}\cite{duchene2013labeled,tahraoui2012coloring}
Let $C_n$ be the cycle of order $n=2k+x$, $k$ even, $x=2$. We have $\lambda^k(C_n) \leq x+2$.
\label{x2_keven}
\end{theorem}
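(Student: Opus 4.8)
The plan is to use the standard reformulation of $\lambda^k$ in terms of orbits of permutations. Write a hypothetical $p$-labeled packing as $(\sigma_1,\dots,\sigma_k)$ with $\sigma_1=\mathrm{id}$, so each $\sigma_i$ is a permutation of $V(K_n)$ sending $C_n$ to a Hamiltonian cycle $H_i$, the $H_i$ are pairwise edge-disjoint, and — since $n=2k+2$ — their union is $K_n$ minus a perfect matching $M$ with $|M|=k+1$. The condition $f\circ\sigma_i=f\circ\sigma_j$ for all $i,j$, together with $\sigma_1=\mathrm{id}$, forces $f$ to be constant on each orbit of $\Gamma:=\langle\sigma_2,\dots,\sigma_k\rangle$, and conversely any function constant on $\Gamma$-orbits is a valid labeling; hence $\lambda^k(C_n)$ equals the maximum number of $\Gamma$-orbits over all packings, and it suffices to show $\Gamma$ never has $5$ orbits. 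Suppose it does.

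The central device is a saturation argument at small orbits. Every $\sigma_i$ (including $\sigma_1$) preserves each $\Gamma$-orbit. If some orbit is a singleton $\{b\}$, then in each $H_i$ the two edges at $b$ have other endpoints $\sigma_i(b-1),\sigma_i(b+1)$; edge-disjointness forces all $2k$ of these vertices to be distinct, and they lie in the orbits of $b-1$ and $b+1$. Thus either one orbit has size $\ge 2k$, or two orbits have size $\ge k$ each, leaving room for at most one further vertex, so there are at most $4$ orbits — a contradiction. Hence the packing has no fixed point, every orbit has size $\ge 2$, and in particular $k\ge 4$.

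Next I would extract rigidity from edge counts. For an orbit $O$ of size $s$ made of $r$ maximal arcs of $C_n$, the $k$ edge-disjoint images of its internal graph (a union of $r$ paths, $s-r$ edges) fit inside the complete graph on $O$, so $m_O:=\binom s2-k(s-r)\ge 0$; with the cross-slacks $m_{OO'}:=|O||O'|-k\,e_{OO'}$ (where $e_{OO'}$ counts $C_n$-edges between $O$ and $O'$) one gets $\sum_O m_O+\sum_{O<O'}m_{OO'}=\binom n2-kn=k+1$. Moreover $m_O$ is the number of $M$-edges inside $O$, so $m_O\le\lfloor s/2\rfloor$. This rules out orbits of size $3$ (they would be independent, forcing $m_O=3>1$) and shows each orbit of size $2$ is exactly an edge of $M$. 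Running the saturation argument at such an orbit $O=\{a,a'\}\in M$ — whose $2k$ neighbours in $\bigcup H_i$ are precisely $V(K_n)\setminus O$ and split according to which copies fix, resp.\ swap, $a$ and $a'$ — I would compare the number of edges $O$ sends to each neighbour-orbit with the $2|O'|$ edges available, using that no $M$-edge leaves $O$, to force each neighbour-orbit to have size $k/2$; this is exactly where $k$ even is needed, and the only surviving configuration has five orbits $O,P_1,\dots,P_4$ of sizes $(2,k/2,k/2,k/2,k/2)$ (with a parallel variant when a smallest orbit has size $4$, which can only happen for $k\in\{4,6\}$).

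Finally one must eliminate this rigid configuration. The exact budget $\sum m=k+1$ makes all cross-slacks at $O$ vanish and pins $\sum_i|E_{P_i}|$ to the two values $k/2-2$ and $k/2-1$; I would then treat each case via the divisibility conditions coming from $m_{P_i}=0$ or $m_{P_iP_j}=0$ (i.e.\ $k\mid\binom{k/2}{2}$, $k\mid(k/2)^2$), the degree equations $\deg_Q(P_i)=k$ in the quotient multigraph $Q$ of $C_n$ by the orbits, and the fact that $C_n$ minus the two vertices of $O$ is a union of two paths threading the $P_i$'s. I expect this last step — closing the no-fixed-point case and squeezing the contradiction out of the parity of $k$ — to be the real obstacle, requiring a careful and possibly lengthy case analysis; by contrast, the orbit reformulation, the fixed-point reduction, and the matching-edge observation are routine.
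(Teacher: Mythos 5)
First, a point of reference: the paper you were given does not prove this statement at all --- Theorem~\ref{x2_keven} is quoted from Duch\^ene, Kheddouci, Nowakowski and Tahraoui as supporting evidence for Conjecture~\ref{THEconjecture}, and the closest the present paper comes is Theorem~\ref{qtheorem}, which for $x=2$ only covers the case $q=1$ (a label used once). So your proposal has to stand on its own, and its opening moves are in fact sound: the orbit reformulation is exactly the paper's Observation on cycle decompositions; your singleton-orbit saturation argument is correct and reproves the $q=1$ case; the slack bookkeeping $m_O=\binom{s}{2}-k(s-r)$, the identification of the leftover graph with a perfect matching $M$ of size $k+1$, the exclusion of orbits of size $3$, and the derivation that a size-$2$ orbit forces exactly five orbits of sizes $(2,k/2,k/2,k/2,k/2)$ all check out (I verified the last one: $ke_{OP}=2|P|$ with $\sum_P e_{OP}=4$ forces four neighbour orbits with $e_{OP}=1$).

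However, the proposal is not a proof, for two concrete reasons. (1) The decisive step is missing, and you say so yourself: eliminating the rigid $(2,k/2,k/2,k/2,k/2)$ configuration is exactly where ``$k$ even'' has to produce the bound $4$ rather than $5$, and the tools you list (the budget $\sum m=k+1$, divisibility conditions such as $k\mid\binom{k/2}{2}$ or $k\mid (k/2)^2$, quotient degrees, the two-path structure of $C_n\setminus O$) are not shown to yield a contradiction --- and counting alone plausibly cannot, since already for $k=4$ the profile $(2,2,2,2,2)$ satisfies every edge-count and matching constraint and is only killed by a genuinely structural argument (e.g.\ encoding which copies swap each $2$-orbit as vectors over $\mathbb{F}_2$ and deriving a parity contradiction). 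Declaring this ``the real obstacle, requiring a careful and possibly lengthy case analysis'' is an accurate self-diagnosis, but it means the theorem is not proved. (2) Your case analysis on the smallest orbit is incomplete: you treat smallest size $2$ and smallest size $4$ (the latter correctly confined to $k\in\{4,6\}$), but nothing you state excludes packings in which every orbit has size at least $5$. Sizes $s\geq 3$ only force $s(s-1)\geq 2k$, so for even $k$ roughly $\geq 14$ five ``large'' orbits fit numerically inside $n=2k+2$; ruling such profiles out requires further Diophantine work with the cross-constraints $0\leq s_is_j-ke_{ij}\leq\min(s_i,s_j)$ (which you never state in general), and even if that always succeeds it is an argument you have not supplied. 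So the skeleton up to the rigid configuration is correct and worthwhile, but both the missing case and, above all, the final elimination step are genuine gaps.
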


\section{Labeled packing of cycles}

In this part, we start by disproving Conjecture \ref{THEconjecture}. Indeed, we give a counter example to show that the upper bound it gives is not valid in general. However, we give some particular cases for which the upper bound is valid, and we prove that the lower bound always holds.

\bigbreak

The counter example uses the values $k=9$, $x=3$, and therefore $n=21$. We show that there exists a $7$-labeled packing of $9$ copies of $C_{21}$, so that $\lambda^9(C_{21}) \geq 7$, while Conjecture \ref{THEconjecture} would give $\lambda^9(C_{21}) = 3 +2 = 5$. We take the $7$-labeled packing of $9$ copies of $C_{21}$ presented in Figure \ref{counterexamplefigure}.

\bigbreak

\begin{figure}

	\centering
	\begin{tikzpicture}[scale=0.18]
    
    
    \filldraw (10,0) circle (5pt);
    \draw (10,0) node[right]{$a$};
    \filldraw (9.6,2.9) circle (5pt);
    \draw (9.6,2.9) node[right]{$b$};
    \filldraw (8.3,5.6) circle (5pt);
    \draw (8.3,5.6) node[right]{$c$};
    \filldraw (6.2,7.8) circle (5pt);
    \draw (6.2,7.8) node[above]{$a$};
    \filldraw (3.7,9.3) circle (5pt);
    \draw (3.7,9.3) node[above]{$d$};
    \filldraw (0.7,10) circle (5pt);
    \draw (0.7,10) node[above]{$e$};
    \filldraw (-2.2,9.7) circle (5pt);
    \draw (-2.2,9.7) node[above]{$a$};
    \filldraw (-5,8.7) circle (5pt);
    \draw (-5,8.7) node[above]{$f$};
    \filldraw (-7.3,6.8) circle (5pt);
    \draw (-7.3,6.8) node[left]{$b$};
    \filldraw (-9,4.3) circle (5pt);
    \draw (-9,4.3) node[left]{$g$};
    \filldraw (-9.9,1.5) circle (5pt);
    \draw (-9.9,1.5) node[left]{$d$};
    \filldraw (-9.9,-1.5) circle (5pt);
    \draw (-9.9,-1.5) node[left]{$b$};
    \filldraw (-9,-4.3) circle (5pt);
    \draw (-9,-4.3) node[left]{$e$};
    \filldraw (-7.3,-6.8) circle (5pt);
    \draw (-7.3,-6.8) node[below]{$c$};
    \filldraw (-5,-8.7) circle (5pt);
    \draw (-5,-8.7) node[below]{$f$};
    \filldraw (-2.2,-9.7) circle (5pt);
    \draw (-2.2,-9.7) node[below]{$e$};
    \filldraw (0.7,-10) circle (5pt);
    \draw (0.7,-10) node[below]{$g$};
    \filldraw (3.7,-9.3) circle (5pt);
    \draw (3.7,-9.3) node[below]{$c$};
    \filldraw (6.2,-7.8) circle (5pt);
    \draw (6.2,-7.8) node[below]{$d$};
    \filldraw (8.3,-5.6) circle (5pt);
    \draw (8.3,-5.6) node[right]{$f$};
    \filldraw (9.6,-2.9) circle (5pt);
    \draw (9.6,-2.9) node[right]{$g$};
    
    \draw[thick] (10,0) -- (9.6,2.9) -- (8.3,5.6) -- (6.2,7.8) -- (3.7,9.3) -- (0.7,10) -- (-2.2,9.7) -- (-5,8.7) -- (-7.3,6.8) -- (-9,4.3) -- (-9.9,1.5) -- (-9.9,-1.5) -- (-9,-4.3) -- (-7.3,-6.8) -- (-5,-8.7) -- (-2.2,-9.7) -- (0.7,-10) -- (3.7,-9.3) -- (6.2,-7.8) -- (8.3,-5.6) -- (9.6,-2.9) -- (10,0);

    \filldraw (40,0) circle (5pt);
    \draw (40,0) node[right]{$a$};
    \filldraw (39.6,2.9) circle (5pt);
    \draw (39.6,2.9) node[right]{$b$};
    \filldraw (38.3,5.6) circle (5pt);
    \draw (38.3,5.6) node[right]{$c$};
    \filldraw (36.2,7.8) circle (5pt);
    \draw (36.2,7.8) node[above]{$a$};
    \filldraw (33.7,9.3) circle (5pt);
    \draw (33.7,9.3) node[above]{$d$};
    \filldraw (30.7,10) circle (5pt);
    \draw (30.7,10) node[above]{$e$};
    \filldraw (27.8,9.7) circle (5pt);
    \draw (27.8,9.7) node[above]{$a$};
    \filldraw (25,8.7) circle (5pt);
    \draw (25,8.7) node[above]{$f$};
    \filldraw (22.7,6.8) circle (5pt);
    \draw (22.7,6.8) node[left]{$b$};
    \filldraw (21,4.3) circle (5pt);
    \draw (21,4.3) node[left]{$g$};
    \filldraw (20.1,1.5) circle (5pt);
    \draw (20.1,1.5) node[left]{$d$};
    \filldraw (20.1,-1.5) circle (5pt);
    \draw (20.1,-1.5) node[left]{$b$};
    \filldraw (21,-4.3) circle (5pt);
    \draw (21,-4.3) node[left]{$e$};
    \filldraw (22.7,-6.8) circle (5pt);
    \draw (22.7,-6.8) node[below]{$c$};
    \filldraw (25,-8.7) circle (5pt);
    \draw (25,-8.7) node[below]{$f$};
    \filldraw (27.8,-9.7) circle (5pt);
    \draw (27.8,-9.7) node[below]{$e$};
    \filldraw (30.7,-10) circle (5pt);
    \draw (30.7,-10) node[below]{$g$};
    \filldraw (33.7,-9.3) circle (5pt);
    \draw (33.7,-9.3) node[below]{$c$};
    \filldraw (36.2,-7.8) circle (5pt);
    \draw (36.2,-7.8) node[below]{$d$};
    \filldraw (38.3,-5.6) circle (5pt);
    \draw (38.3,-5.6) node[right]{$f$};
    \filldraw (39.6,-2.9) circle (5pt);
    \draw (39.6,-2.9) node[right]{$g$};
    
    \draw[thick] (40,0) -- (22.7,6.8) -- (22.7,-6.8) -- (36.2,7.8) -- (20.1,1.5) --(21,-4.3) -- (27.8,9.7) -- (27.8,9.7) -- (25,-8.7) -- (20.1,-1.5) -- (39.6,-2.9) -- (36.2,-7.8) -- (39.6,2.9) -- (27.8,-9.7) -- (38.3,5.6) -- (38.3,-5.6) -- (30.7,10) -- (21,4.3) -- (33.7,-9.3) -- (33.7,9.3) -- (25,8.7) -- (30.7,-10) -- (40,0);

    ['a1', 'b2', 'c2', 'a2', 'd2', 'e2', 'a3', 'f2', 'b3', 'g1', 'd3', 'b1', 'e3', 'c1', 'f3', 'e1', 'g2', 'c3', 'd1', 'f1', 'g3']

    \filldraw (70,0) circle (5pt);
    \draw (70,0) node[right]{$a$};
    \filldraw (69.6,2.9) circle (5pt);
    \draw (69.6,2.9) node[right]{$b$};
    \filldraw (68.3,5.6) circle (5pt);
    \draw (68.3,5.6) node[right]{$c$};
    \filldraw (66.2,7.8) circle (5pt);
    \draw (66.2,7.8) node[above]{$a$};
    \filldraw (63.7,9.3) circle (5pt);
    \draw (63.7,9.3) node[above]{$d$};
    \filldraw (60.7,10) circle (5pt);
    \draw (60.7,10) node[above]{$e$};
    \filldraw (57.8,9.7) circle (5pt);
    \draw (57.8,9.7) node[above]{$a$};
    \filldraw (55,8.7) circle (5pt);
    \draw (55,8.7) node[above]{$f$};
    \filldraw (52.7,6.8) circle (5pt);
    \draw (52.7,6.8) node[left]{$b$};
    \filldraw (51,4.3) circle (5pt);
    \draw (51,4.3) node[left]{$g$};
    \filldraw (50.1,1.5) circle (5pt);
    \draw (50.1,1.5) node[left]{$d$};
    \filldraw (50.1,-1.5) circle (5pt);
    \draw (50.1,-1.5) node[left]{$b$};
    \filldraw (51,-4.3) circle (5pt);
    \draw (51,-4.3) node[left]{$e$};
    \filldraw (52.7,-6.8) circle (5pt);
    \draw (52.7,-6.8) node[below]{$c$};
    \filldraw (55,-8.7) circle (5pt);
    \draw (55,-8.7) node[below]{$f$};
    \filldraw (57.8,-9.7) circle (5pt);
    \draw (57.8,-9.7) node[below]{$e$};
    \filldraw (60.7,-10) circle (5pt);
    \draw (60.7,-10) node[below]{$g$};
    \filldraw (63.7,-9.3) circle (5pt);
    \draw (63.7,-9.3) node[below]{$c$};
    \filldraw (66.2,-7.8) circle (5pt);
    \draw (66.2,-7.8) node[below]{$d$};
    \filldraw (68.3,-5.6) circle (5pt);
    \draw (68.3,-5.6) node[right]{$f$};
    \filldraw (69.6,-2.9) circle (5pt);
    \draw (69.6,-2.9) node[right]{$g$};
    
    \draw[thick] (70,0) -- (50.1,-1.5) -- (68.3,5.6) -- (57.8,9.7) -- (66.2,-7.8) -- (57.8,-9.7) -- (66.2,7.8) -- (68.3,-5.6) --  (52.7,6.8) -- (69.6,-2.9) -- (50.1,1.5) -- (69.6,2.9) -- (51,-4.3) -- (63.7,-9.3) -- (55,8.7) -- (60.7,10) -- (60.7,-10) --  (52.7,-6.8) -- (63.7,9.3) -- (55,-8.7) -- (55,-8.7) -- (51,4.3) -- (70,0);
    
    ['a1', 'b3', 'c1', 'a3', 'd3', 'e3', 'a2', 'f3', 'b2', 'g1', 'd2', 'b1', 'e2', 'c3', 'f1', 'e1', 'g3', 'c2', 'd1', 'f2', 'g2']
    
    \filldraw (10,-30) circle (5pt);
    \draw (10,-30) node[right]{$a$};
    \filldraw (9.6,-27.1) circle (5pt);
    \draw (9.6,-27.1) node[right]{$b$};
    \filldraw (8.3,-24.6) circle (5pt);
    \draw (8.3,-24.6) node[right]{$c$};
    \filldraw (6.2,-22.2) circle (5pt);
    \draw (6.2,-22.2) node[above]{$a$};
    \filldraw (3.7,-20.7) circle (5pt);
    \draw (3.7,-20.7) node[above]{$d$};
    \filldraw (0.7,-20) circle (5pt);
    \draw (0.7,-20) node[above]{$e$};
    \filldraw (-2.2,-20.3) circle (5pt);
    \draw (-2.2,-20.3) node[above]{$a$};
    \filldraw (-5,-21.3) circle (5pt);
    \draw (-5,-21.3) node[above]{$f$};
    \filldraw (-7.3,-23.8) circle (5pt);
    \draw (-7.3,-23.8) node[left]{$b$};
    \filldraw (-9,-25.7) circle (5pt);
    \draw (-9,-25.7) node[left]{$g$};
    \filldraw (-9.9,-28.5) circle (5pt);
    \draw (-9.9,-28.5) node[left]{$d$};
    \filldraw (-9.9,-31.5) circle (5pt);
    \draw (-9.9,-31.5) node[left]{$b$};
    \filldraw (-9,-34.3) circle (5pt);
    \draw (-9,-34.3) node[left]{$e$};
    \filldraw (-7.3,-36.8) circle (5pt);
    \draw (-7.3,-36.8) node[below]{$c$};
    \filldraw (-5,-38.7) circle (5pt);
    \draw (-5,-38.7) node[below]{$f$};
    \filldraw (-2.2,-39.7) circle (5pt);
    \draw (-2.2,-39.7) node[below]{$e$};
    \filldraw (0.7,-40) circle (5pt);
    \draw (0.7,-40) node[below]{$g$};
    \filldraw (3.7,-39.3) circle (5pt);
    \draw (3.7,-39.3) node[below]{$c$};
    \filldraw (6.2,-37.8) circle (5pt);
    \draw (6.2,-37.8) node[below]{$d$};
    \filldraw (8.3,-35.6) circle (5pt);
    \draw (8.3,-35.6) node[right]{$f$};
    \filldraw (9.6,-32.9) circle (5pt);
    \draw (9.6,-32.9) node[right]{$g$};
    
    \draw[thick] (6.2,-22.2) -- (9.6,-27.1) -- (3.7,-39.3) -- (-2.2,-20.3) -- (3.7,-20.7) -- (-2.2,-39.7) -- (10,-30) -- (-5,-21.3) -- (-9.9,-31.5) -- (-9,-25.7) -- (6.2,-37.8) -- (-7.3,-23.8) -- (-9,-34.3) -- (8.3,-24.6) -- (-5,-38.7) -- (0.7,-20) -- (9.6,-32.9) -- (-7.3,-36.8) -- (-9.9,-28.5) -- (8.3,-35.6) -- (0.7,-40) -- (6.2,-22.2);

    ['a2', 'b1', 'c3', 'a3', 'd1', 'e3', 'a1', 'f1', 'b3', 'g2', 'd3', 'b2', 'e2', 'c1', 'f2', 'e1', 'g1', 'c2', 'd2', 'f3', 'g3']

    \filldraw (40,-30) circle (5pt);
    \draw (40,-30) node[right]{$a$};
    \filldraw (39.6,-27.1) circle (5pt);
    \draw (39.6,-27.1) node[right]{$b$};
    \filldraw (38.3,-24.4) circle (5pt);
    \draw (38.3,-24.4) node[right]{$c$};
    \filldraw (36.2,-22.2) circle (5pt);
    \draw (36.2,-22.2) node[above]{$a$};
    \filldraw (33.7,-20.7) circle (5pt);
    \draw (33.7,-20.7) node[above]{$d$};
    \filldraw (30.7,-20) circle (5pt);
    \draw (30.7,-20) node[above]{$e$};
    \filldraw (27.8,-20.3) circle (5pt);
    \draw (27.8,-20.3) node[above]{$a$};
    \filldraw (25,-21.3) circle (5pt);
    \draw (25,-21.3) node[above]{$f$};
    \filldraw (22.7,-23.2) circle (5pt);
    \draw (22.7,-23.2) node[left]{$b$};
    \filldraw (21,-25.7) circle (5pt);
    \draw (21,-25.7) node[left]{$g$};
    \filldraw (20.1,-28.5) circle (5pt);
    \draw (20.1,-28.5) node[left]{$d$};
    \filldraw (20.1,-31.5) circle (5pt);
    \draw (20.1,-31.5) node[left]{$b$};
    \filldraw (21,-34.3) circle (5pt);
    \draw (21,-34.3) node[left]{$e$};
    \filldraw (22.7,-36.8) circle (5pt);
    \draw (22.7,-36.8) node[below]{$c$};
    \filldraw (25,-38.7) circle (5pt);
    \draw (25,-38.7) node[below]{$f$};
    \filldraw (27.8,-39.7) circle (5pt);
    \draw (27.8,-39.7) node[below]{$e$};
    \filldraw (30.7,-40) circle (5pt);
    \draw (30.7,-40) node[below]{$g$};
    \filldraw (33.7,-39.3) circle (5pt);
    \draw (33.7,-39.3) node[below]{$c$};
    \filldraw (36.2,-37.8) circle (5pt);
    \draw (36.2,-37.8) node[below]{$d$};
    \filldraw (38.3,-35.6) circle (5pt);
    \draw (38.3,-35.6) node[right]{$f$};
    \filldraw (39.6,-32.9) circle (5pt);
    \draw (39.6,-32.9) node[right]{$g$};
    
    \draw[thick] (36.2,-22.2) -- (22.7,-23.2) -- (33.7,-39.3) -- (40,-30) -- (20.1,-28.5) -- (27.8,-39.7) -- (27.8,-20.3) -- (38.3,-35.6) -- (20.1,-31.5) -- (30.7,-40) -- (33.7,-20.7) -- (39.6,-27.1) -- (30.7,-20) -- (38.3,-24.4) -- (25,-21.3) -- (21,-34.3) -- (21,-25.7) -- (22.7,-36.8) -- (36.2,-37.8) -- (25,-38.7) -- (39.6,-32.9) -- (36.2,-22.2);
    
    ['a2', 'b2', 'c3', 'a1', 'd2', 'e3', 'a3', 'f3', 'b3', 'g3', 'd1', 'b1', 'e1', 'c1', 'f1', 'e2', 'g2', 'c2', 'd3', 'f2', 'g1']
    
    \filldraw (70,-30) circle (5pt);
    \draw (70,-30) node[right]{$a$};
    \filldraw (69.6,-27.1) circle (5pt);
    \draw (69.6,-27.1) node[right]{$b$};
    \filldraw (68.3,-24.4) circle (5pt);
    \draw (68.3,-24.4) node[right]{$c$};
    \filldraw (66.2,-22.2) circle (5pt);
    \draw (66.2,-22.2) node[above]{$a$};
    \filldraw (63.7,-20.7) circle (5pt);
    \draw (63.7,-20.7) node[above]{$d$};
    \filldraw (60.7,-20) circle (5pt);
    \draw (60.7,-20) node[above]{$e$};
    \filldraw (57.8,-20.3) circle (5pt);
    \draw (57.8,-20.3) node[above]{$a$};
    \filldraw (55,-21.3) circle (5pt);
    \draw (55,-21.3) node[above]{$f$};
    \filldraw (52.7,-23.2) circle (5pt);
    \draw (52.7,-23.2) node[left]{$b$};
    \filldraw (51,-25.7) circle (5pt);
    \draw (51,-25.7) node[left]{$g$};
    \filldraw (50.1,-28.5) circle (5pt);
    \draw (50.1,-28.5) node[left]{$d$};
    \filldraw (50.1,-31.5) circle (5pt);
    \draw (50.1,-31.5) node[left]{$b$};
    \filldraw (51,-34.3) circle (5pt);
    \draw (51,-34.3) node[left]{$e$};
    \filldraw (52.7,-36.8) circle (5pt);
    \draw (52.7,-36.8) node[below]{$c$};
    \filldraw (55,-38.7) circle (5pt);
    \draw (55,-38.7) node[below]{$f$};
    \filldraw (57.8,-39.7) circle (5pt);
    \draw (57.8,-39.7) node[below]{$e$};
    \filldraw (60.7,-40) circle (5pt);
    \draw (60.7,-40) node[below]{$g$};
    \filldraw (63.7,-39.3) circle (5pt);
    \draw (63.7,-39.3) node[below]{$c$};
    \filldraw (66.2,-37.8) circle (5pt);
    \draw (66.2,-37.8) node[below]{$d$};
    \filldraw (68.3,-35.6) circle (5pt);
    \draw (68.3,-35.6) node[right]{$f$};
    \filldraw (69.6,-32.9) circle (5pt);
    \draw (69.6,-32.9) node[right]{$g$};

    \draw[thick] (66.2,-22.2) -- (50.1,-31.5) -- (52.7,-36.8) -- (57.8,-20.3) -- (50.1,-28.5) -- (60.7,-20) -- (70,-30) -- (68.3,-35.6) -- (69.6,-27.1) -- (69.6,-32.9) -- (63.7,-20.7) -- (52.7,-23.2) -- (57.8,-39.7) -- (63.7,-39.3) -- (55,-38.7) -- (51,-34.3) -- (60.7,-40)  -- (68.3,-24.4) -- (66.2,-37.8) -- (55,-21.3) -- (51,-25.7) -- (66.2,-22.2);
    
    ['a2', 'b3', 'c2', 'a3', 'd2', 'e1', 'a1', 'f3', 'b1', 'g1', 'd1', 'b2', 'e3', 'c3', 'f2', 'e2', 'g3', 'c1', 'd3', 'f1', 'g2']

      \filldraw (10,-60) circle (5pt);
    \draw (10,-60) node[right]{$a$};
    \filldraw (9.6,-57.1) circle (5pt);
    \draw (9.6,-57.1) node[right]{$b$};
    \filldraw (8.3,-54.6) circle (5pt);
    \draw (8.3,-54.6) node[right]{$c$};
    \filldraw (6.2,-52.2) circle (5pt);
    \draw (6.2,-52.2) node[above]{$a$};
    \filldraw (3.7,-50.7) circle (5pt);
    \draw (3.7,-50.7) node[above]{$d$};
    \filldraw (0.7,-50) circle (5pt);
    \draw (0.7,-50) node[above]{$e$};
    \filldraw (-2.2,-50.3) circle (5pt);
    \draw (-2.2,-50.3) node[above]{$a$};
    \filldraw (-5,-51.3) circle (5pt);
    \draw (-5,-51.3) node[above]{$f$};
    \filldraw (-7.3,-53.8) circle (5pt);
    \draw (-7.3,-53.8) node[left]{$b$};
    \filldraw (-9,-55.7) circle (5pt);
    \draw (-9,-55.7) node[left]{$g$};
    \filldraw (-9.9,-58.5) circle (5pt);
    \draw (-9.9,-58.5) node[left]{$d$};
    \filldraw (-9.9,-61.5) circle (5pt);
    \draw (-9.9,-61.5) node[left]{$b$};
    \filldraw (-9,-64.3) circle (5pt);
    \draw (-9,-64.3) node[left]{$e$};
    \filldraw (-7.3,-66.8) circle (5pt);
    \draw (-7.3,-66.8) node[below]{$c$};
    \filldraw (-5,-68.7) circle (5pt);
    \draw (-5,-68.7) node[below]{$f$};
    \filldraw (-2.2,-69.7) circle (5pt);
    \draw (-2.2,-69.7) node[below]{$e$};
    \filldraw (0.7,-70) circle (5pt);
    \draw (0.7,-70) node[below]{$g$};
    \filldraw (3.7,-69.3) circle (5pt);
    \draw (3.7,-69.3) node[below]{$c$};
    \filldraw (6.2,-67.8) circle (5pt);
    \draw (6.2,-67.8) node[below]{$d$};
    \filldraw (8.3,-65.6) circle (5pt);
    \draw (8.3,-65.6) node[right]{$f$};
    \filldraw (9.6,-62.9) circle (5pt);
    \draw (9.6,-62.9) node[right]{$g$};
    
    \draw[thick] (-2.2,-50.3) -- (9.6,-57.1) -- (-7.3,-66.8) -- (10,-60) -- (3.7,-50.7) -- (-9,-64.3) -- (6.2,-52.2) -- (-5,-68.7) -- (-7.3,-53.8) -- (0.7,-70) -- (6.2,-67.8) -- (-9.9,-61.5) -- (0.7,-50) -- (3.7,-69.3) -- (8.3,-65.6) -- (-2.2,-69.7) -- (-9,-55.7) -- (8.3,-54.6) -- (-9.9,-58.5) -- (-5,-51.3) -- (9.6,-62.9) -- (-2.2,-50.3);
    
    ['a3', 'b1', 'c2', 'a1', 'd1', 'e2', 'a2', 'f2', 'b2', 'g3', 'd3', 'b3', 'e1', 'c3', 'f3', 'e3', 'g2', 'c1', 'd2', 'f1', 'g1']

    \filldraw (40,-60) circle (5pt);
    \draw (40,-60) node[right]{$a$};
    \filldraw (39.6,-57.1) circle (5pt);
    \draw (39.6,-57.1) node[right]{$b$};
    \filldraw (38.3,-54.4) circle (5pt);
    \draw (38.3,-54.4) node[right]{$c$};
    \filldraw (36.2,-52.2) circle (5pt);
    \draw (36.2,-52.2) node[above]{$a$};
    \filldraw (33.7,-50.7) circle (5pt);
    \draw (33.7,-50.7) node[above]{$d$};
    \filldraw (30.7,-50) circle (5pt);
    \draw (30.7,-50) node[above]{$e$};
    \filldraw (27.8,-50.3) circle (5pt);
    \draw (27.8,-50.3) node[above]{$a$};
    \filldraw (25,-51.3) circle (5pt);
    \draw (25,-51.3) node[above]{$f$};
    \filldraw (22.7,-53.2) circle (5pt);
    \draw (22.7,-53.2) node[left]{$b$};
    \filldraw (21,-55.7) circle (5pt);
    \draw (21,-55.7) node[left]{$g$};
    \filldraw (20.1,-58.5) circle (5pt);
    \draw (20.1,-58.5) node[left]{$d$};
    \filldraw (20.1,-61.5) circle (5pt);
    \draw (20.1,-61.5) node[left]{$b$};
    \filldraw (21,-64.3) circle (5pt);
    \draw (21,-64.3) node[left]{$e$};
    \filldraw (22.7,-66.8) circle (5pt);
    \draw (22.7,-66.8) node[below]{$c$};
    \filldraw (25,-68.7) circle (5pt);
    \draw (25,-68.7) node[below]{$f$};
    \filldraw (27.8,-69.7) circle (5pt);
    \draw (27.8,-69.7) node[below]{$e$};
    \filldraw (30.7,-70) circle (5pt);
    \draw (30.7,-70) node[below]{$g$};
    \filldraw (33.7,-69.3) circle (5pt);
    \draw (33.7,-69.3) node[below]{$c$};
    \filldraw (36.2,-67.8) circle (5pt);
    \draw (36.2,-67.8) node[below]{$d$};
    \filldraw (38.3,-65.6) circle (5pt);
    \draw (38.3,-65.6) node[right]{$f$};
    \filldraw (39.6,-62.9) circle (5pt);
    \draw (39.6,-62.9) node[right]{$g$};
    
    \draw[thick] (27.8,-50.3) -- (22.7,-53.2) -- (38.3,-54.4) -- (40,-60) -- (36.2,-67.8) -- (30.7,-50) -- (36.2,-52.2) -- (25,-51.3) -- (39.6,-57.1) -- (21,-55.7) -- (33.7,-50.7) -- (20.1,-61.5) -- (27.8,-69.7) -- (22.7,-66.8) -- (38.3,-65.6) -- (21,-64.3) -- (39.6,-62.9) -- (33.7,-69.3) -- (20.1,-58.5) -- (25,-68.7) -- (30.7,-70) -- (27.8,-50.3);

   ['a3', 'b2', 'c1', 'a1', 'd3', 'e1', 'a2', 'f1', 'b1', 'g2', 'd1', 'b3', 'e3', 'c2', 'f3', 'e2', 'g1', 'c3', 'd2', 'f2', 'g3']

        \filldraw (70,-60) circle (5pt);
    \draw (70,-60) node[right]{$a$};
    \filldraw (69.6,-57.1) circle (5pt);
    \draw (69.6,-57.1) node[right]{$b$};
    \filldraw (68.3,-54.4) circle (5pt);
    \draw (68.3,-54.4) node[right]{$c$};
    \filldraw (66.2,-52.2) circle (5pt);
    \draw (66.2,-52.2) node[above]{$a$};
    \filldraw (63.7,-50.7) circle (5pt);
    \draw (63.7,-50.7) node[above]{$d$};
    \filldraw (60.7,-50) circle (5pt);
    \draw (60.7,-50) node[above]{$e$};
    \filldraw (57.8,-50.3) circle (5pt);
    \draw (57.8,-50.3) node[above]{$a$};
    \filldraw (55,-51.3) circle (5pt);
    \draw (55,-51.3) node[above]{$f$};
    \filldraw (52.7,-53.2) circle (5pt);
    \draw (52.7,-53.2) node[left]{$b$};
    \filldraw (51,-55.7) circle (5pt);
    \draw (51,-55.7) node[left]{$g$};
    \filldraw (50.1,-58.5) circle (5pt);
    \draw (50.1,-58.5) node[left]{$d$};
    \filldraw (50.1,-61.5) circle (5pt);
    \draw (50.1,-61.5) node[left]{$b$};
    \filldraw (51,-64.3) circle (5pt);
    \draw (51,-64.3) node[left]{$e$};
    \filldraw (52.7,-66.8) circle (5pt);
    \draw (52.7,-66.8) node[below]{$c$};
    \filldraw (55,-68.7) circle (5pt);
    \draw (55,-68.7) node[below]{$f$};
    \filldraw (57.8,-69.7) circle (5pt);
    \draw (57.8,-69.7) node[below]{$e$};
    \filldraw (60.7,-70) circle (5pt);
    \draw (60.7,-70) node[below]{$g$};
    \filldraw (63.7,-69.3) circle (5pt);
    \draw (63.7,-69.3) node[below]{$c$};
    \filldraw (66.2,-67.8) circle (5pt);
    \draw (66.2,-67.8) node[below]{$d$};
    \filldraw (68.3,-65.6) circle (5pt);
    \draw (68.3,-65.6) node[right]{$f$};
    \filldraw (69.6,-62.9) circle (5pt);
    \draw (69.6,-62.9) node[right]{$g$};

    \draw[thick] (57.8,-50.3) -- (50.1,-61.5) -- (63.7,-69.3) -- (66.2,-52.2) -- (66.2,-67.8) -- (51,-64.3) -- (70,-60) -- (55,-68.7) -- (69.6,-57.1) -- (60.7,-70) -- (50.1,-58.5) -- (52.7,-53.2) -- (60.7,-50) -- (52.7,-66.8) -- (55,-51.3) -- (57.8,-69.7) -- (69.6,-62.9) -- (68.3,-54.4) -- (63.7,-50.7) -- (68.3,-65.6) -- (51,-55.7) -- (57.8,-50.3);

['a3', 'b3', 'c3', 'a2', 'd3', 'e2', 'a1', 'f2', 'b1', 'g3', 'd2', 'b2', 'e1', 'c2', 'f1', 'e3', 'g1', 'c1', 'd1', 'f3', 'g2']

	\end{tikzpicture}
    \caption{A $7$-labeled packing of $k=9$ copies of $C_{21}$. The labels are represented by the letters on the vertices.}
	\label{counterexamplefigure}
\end{figure}
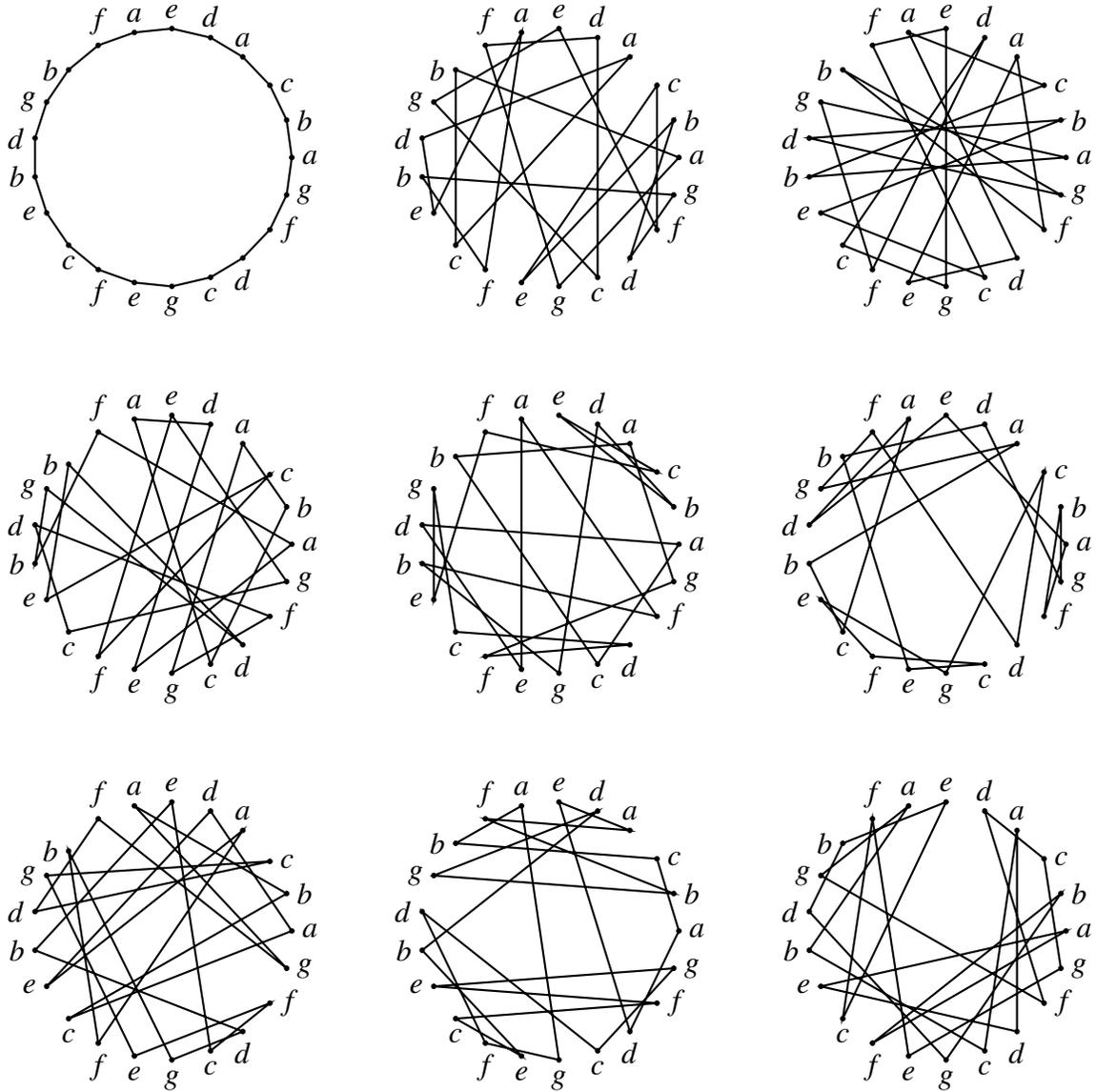
Despite the fact that Conjecture \ref{THEconjecture} is now proven to be false, it gives a good lower bound for $\lambda^k(C_n)$, as expressed in the following theorem:

\begin{theorem}  For every cycle $C_n$ of order $n=2k+x$, $k\geq 2$, $1\leq x \leq 2k-1$, we have:
\\$\lambda^k(C_n) \geq \left\{
    \begin{array}{ll}
        2 & \mbox{ if } x=1 \mbox{ and } k \mbox{ is even }\\
        x+2 & \mbox{ otherwise.}\\
    \end{array}
\right.
$
\label{borne_sup_theorem}
\end{theorem}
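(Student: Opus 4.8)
The plan is to construct the required labeled packing explicitly. Identify $V(K_n)$ with $\mathbb{Z}_{2k}\cup\{\infty_1,\dots,\infty_x\}$, and take the $k$ copies of $C_n$ to be the orbit $C,\sigma(C),\dots,\sigma^{k-1}(C)$ of a single base Hamiltonian cycle $C$ of $K_n$ under the permutation $\sigma$ that acts on $\mathbb{Z}_{2k}$ by $t\mapsto t+2$ and fixes every $\infty_j$. Since $\sigma$ has order $k$ and its orbits are exactly the even residues of $\mathbb{Z}_{2k}$, the odd residues of $\mathbb{Z}_{2k}$, and the $x$ singletons $\{\infty_j\}$, the labeling $f$ that is constant on these $x+2$ orbits satisfies $f\circ\sigma^{j-1}=f$ for every $j$; hence, once $C$ is chosen so that its $k$ translates are pairwise edge-disjoint, we get a valid $(x+2)$-labeled packing of $k$ copies of $C_n$ with $\sigma_j=\sigma^{j-1}$ applied to a fixed parametrization of $C$. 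For the exceptional case $x=1$ with $k$ even, where only the bound $2$ is asserted, I would instead invoke the classical Walecki decomposition of $K_{2k+1}$ into $k$ Hamiltonian cycles, realized as the orbit of one Hamiltonian cycle under the rotation that fixes $\infty_1$ and adds $1$ on $\mathbb{Z}_{2k}$: its two orbits $\{\infty_1\}$ and $\mathbb{Z}_{2k}$ give a $2$-labeled packing directly, settling that case.

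The heart of the argument is therefore the choice of $C$ in the main case. First I would catalogue the $\sigma$-orbits of edges of $K_n$: an edge $\{\infty_j,a\}$ lies in the orbit determined by $j$ and the parity of $a$ (two orbits of size $k$ for each $j$); an edge $\{a,a'\}$ inside $\mathbb{Z}_{2k}$ of length $\ell:=\min(|a-a'|,\,2k-|a-a'|)$ lies, for $1\le\ell\le k-1$, in one of two orbits of size $k$, while the $k$ diametral edges ($\ell=k$) form one orbit of size $k$ when $k$ is odd and two orbits of size $k/2$ when $k$ is even; and an edge joining two $\infty_j$'s is $\sigma$-fixed. Since the $k$ translates of $C$ are pairwise edge-disjoint if and only if every edge of $C$ lies in a $\sigma$-orbit of size exactly $k$ and distinct edges of $C$ lie in distinct orbits, a necessary requirement is that the number of usable orbits, namely $2(k-1)+2x$ (plus $1$ if $k$ is odd), be at least $n=2k+x$; this holds exactly when $x\ge 2$, or when $x=1$ and $k$ is odd, which is precisely the non-exceptional range, a reassuring consistency check. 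I would then exhibit $C$ concretely as a zig-zag Hamiltonian path through $\mathbb{Z}_{2k}$ in Walecki fashion (alternating short forward and long backward steps so that each length is used at most twice, once per orbit, and the diametral edge is avoided when $k$ is even), into which $\infty_1,\dots,\infty_x$ are spliced at prescribed positions — never two consecutively, and each between an even and an odd core vertex — and closed up into a Hamiltonian cycle, and verify by a direct difference computation that the resulting $2k-x$ core edges and $2x$ end-edges occupy $n$ distinct usable orbits.

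The main obstacle is exactly this explicit construction together with its verification: one must simultaneously use $2k-x$ of the roughly $2k$ core-edge orbits with no repetition, steer clear of the diametral orbit when $k$ is even, keep the $\infty_j$'s mutually non-adjacent in $C$ while assigning each of them its two incident edges to the two distinct orbits available to it, and still obtain a single spanning cycle. I expect the bookkeeping to break into a few regimes according to the parity of $k$ and the size of $x$, with the dense range $2k-3\le x\le 2k-1$ handed off to Theorem~\ref{xbetween} (which already gives $\lambda^k(C_n)=x+2$ there, apart from $(k,n)=(2,5)$, itself an instance of the exceptional case) so that only $x\le 2k-4$ is treated by the construction, plus a short finite list of explicitly checked small $(k,x)$. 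Once $C$ is in hand, edge-disjointness of the whole packing is the purely mechanical check that the $n$ orbit labels used are pairwise distinct, and the labeling $f$ above completes the proof.
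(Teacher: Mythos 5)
Your reduction is sound and, in spirit, matches the paper's strategy: you rotate $\mathbb{Z}_{2k}$ by $2$, keep the $x$ extra vertices fixed, label by the $x+2$ orbits of that permutation, and (for $x=1$, $k$ even) fall back on Walecki's decomposition with two labels. Your orbit catalogue is correct (two size-$k$ orbits per length $\ell\le k-1$, one size-$k$ diametral orbit iff $k$ is odd, two size-$k$ orbits per $\infty_j$), and so is the criterion that the $k$ translates of the base cycle $C$ are pairwise edge-disjoint iff every edge of $C$ lies in a size-$k$ orbit and no two edges of $C$ share an orbit. The counting consistency check ($2(k-1)+2x$, plus $1$ if $k$ is odd, being at least $2k+x$) is a nice sanity check.

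But the proof stops exactly where the real content begins: no base cycle $C$ is actually exhibited, and the "direct difference computation" is never performed. That construction and verification is essentially the entire proof in the paper, which gives, for five regimes of $(k,x)$, explicit formulas for the labeling, the permutations $\sigma_j$, the edge partition, and the disjointness checks. Your counting condition is necessary but far from sufficient: once the $\infty_j$'s are spliced in (pairwise non-adjacent, each between an even and an odd core vertex), the $2k-x$ core edges must occupy pairwise distinct core orbits subject to hidden constraints your sketch does not address — for instance, the number of mixed-parity core edges is forced to be $\equiv x \pmod 2$, the numbers of even--even and odd--odd core edges are forced to be equal, and when $k$ is even the diametral orbit is unusable while the orbit budget is tight for small $x$. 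Settling these simultaneously, together with obtaining a single spanning cycle, is exactly the case analysis you defer ("I expect the bookkeeping to break into a few regimes"), so what you have is a plausible plan rather than a proof. It is also worth noting that in one regime ($k$ odd, $k\le x\le 2k-1$) the paper abandons the rotation-by-$2$ framework entirely and instead uses two classes of vertices counter-rotating by $3$ modulo $3k$ around $k$ interleaved fixed points; this does not show your framework fails there, but it is a warning that the splice bookkeeping in that range is not routine. Handing the range $2k-3\le x\le 2k-1$ to the cited Theorem~\ref{xbetween} is legitimate, but the remaining range still requires the missing explicit construction and verification.
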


\begin{proof} We split the proof into five cases, depending on the values of $k$ and $x$, as follows:

\begin{itemize}
\item[1.] $k$ is odd
\begin{itemize}
\item[a)] $1\leq x\leq k-1$
\item[b)] $k \leq x \leq 2k-1$
\end{itemize}
\item[2.] $k$ is even and $x=1$
\item[3.] $k$ is even and $x>1$
\begin{itemize}
\item[a)] $2\leq x\leq k-1$
\item[b)] $k\leq x \leq 2k-1$
\end{itemize}
\end{itemize}

For each case, we construct a labeling $f$ with the given number of labels, and give the associated packing $\sigma=\{\sigma_j, 1\leq j\leq k\}$.

\bigbreak

Let $V(C_n)=\{v_0,...,v_n\}$ be the vertices of $C_n$. For each of those cases, we will partition $V(C_n)$ into a set $F$ containing all the fixed points, and a set $V_l$ for each other label $l$, containing all the vertices having this label:
\\$F=\{v\in V(C_n) : \mbox{ for all } j\in [1,k], \sigma_j(v)=v\}$
\\$V_l=\{v\in V(C_n) : f(v)=l\}$ for any l.
\\We will also partition the set of edges $E(C_n)$ depending on the labels of their vertices:
\\$E_F=\{\{u,v\}\in E(C_n) :  u\in F \mbox{ or } v\in F\}$
\\$E_{p,q}=\{\{u,v\}\in E(C_n) :  u\in V_p \mbox{ and } v\in V_q\}$.

\bigbreak

For $f$ to be a labeled packing with respect to $\sigma$, we must have, for all $j \neq j'$ and $v \in V(C_n)$, $f(\sigma_j(v))=f(\sigma_{j'}(v))$. In any case, for all $v\in F$, since $\sigma_j(v)=v$, we know that $f(\sigma_j(v))=f(v)=f(\sigma_{j'}(v))$. Thus, we only have to show that for all $v\not \in F$ and $j\neq j'$, $f(\sigma_j(v))=f(\sigma_{j'}(v))$.

\bigbreak

For $\sigma$ to be a packing, we must have, for all $j \neq j', \sigma_j^*(E(C_n))\cap \sigma_{j'}^*(E(C_n))=\emptyset$. Assuming that $f$ is a labeled packing with respect to $\sigma$, we already know that for all labels $p,q$, if $X\neq E_{p,q}$, then $\sigma_j^*(E_{p,q})\cap \sigma_{j'}^*(X)=\emptyset$.
\\ \indent We will show for each case that for all $p,q$ and $\{u,v\},\{u',v'\}\in E_{p,q}$ , $\sigma_j(\{u,v\})=\sigma_{j'}(\{u',v'\})$ implies $j=j'$, so that for all $j\neq j', \sigma_j^*(E_{p,q})\cap \sigma_{j'}^*(E_{p,q})=\emptyset$. 
\\ \indent To show that $\sigma_j^*(E_F)\cap \sigma_{j'}^*(E_F)=\emptyset$, since each edge of $E_F$ contains one fixed point, we only have to show that its two neighbors $u$ and $v$ verify $\sigma_j(u)\neq \sigma_{j'}(u)$, $\sigma_j(v)\neq \sigma_{j'}(v)$, $\sigma_j(u)\neq \sigma_{j'}(v)$ and $\sigma_j(v)\neq \sigma_{j'}(u)$. For the two first conditions, we will always choose $\sigma$ such that for all $v\not \in F$ and $j\neq j'$, $\sigma_j(v) \neq \sigma_{j'}(v)$. The two last conditions are respected if $f(u)\neq f(v)$, assuming that $f$ is a labeled packing with respect to $\sigma$. If $f(u)=f(v)$, we will show them.

\bigbreak

For each of the five cases, we are thus going to give a construction and prove that:

\begin{itemize}
\item[*] For each $v\not \in F$ and $j\neq j'$, $f(\sigma_j(v))=f(\sigma_{j'}(v))$.
\item[*] For any $p,q$ and any $\{u,v\},\{u',v'\}\in E_{p,q}$ , if $\sigma_j(\{u,v\})=\sigma_{j'}(\{u',v'\})$, then $j=j'$.
\item[*] For any $v\not \in F$ and $j\neq j'$, $\sigma_j(v) \neq \sigma_{j'}(v)$.
\item[*] For each $u,v$ such that there exists $x\in F$ such that $\{u,x\}, \{x,v\} \in E(C_n)$, if $f(u)=f(v)$, then, for all $j\neq j'$, $\sigma_j(u)\neq \sigma_{j'}(v)$ and $\sigma_j(v)\neq \sigma_{j'}(u)$.
\end{itemize}

\begin{itemize}[leftmargin=*]

\item[1.] $k$ is odd:
\begin{itemize}[leftmargin=*]
\item[a)] $1\leq x\leq k-1$:

\bigbreak

In this case, the principle of the construction is the following: For $x=1$, we label with $1$ the vertex $v_{2k}$, with $a$ every other vertex of even number, and with $b$ every other vertex of odd number. Then, we create the first copy of $C_n$ by joining the vertices in the following order: $$v_0,v_{n-2},v_1,v_{n-3},v_2,v_{n-4},...,v_{\frac{n-1}{2}-1},v_{\frac{n-1}{2}},v_{2k},v_0.$$ We create each other copy by following the same order as the previous one but with every number but $2k$ added to $2$ modulo $n-1$. Note that, since $k$ is odd, the obtained packing is exactly the decomposition of $K_n$ given by Walecki \cite{alspach2008wonderful}. For $x>1$, we add fixed points to the previous construction, on $x-1$ of the $k$ edges that link a vertex $u$ of label $a$ to a vertex $v$ of label $b$, cutting every edge into two edges.

\bigbreak

To express it in a more formal way, we take the following labeling $f$, packing $\sigma=\{\sigma_j, 1\leq j\leq k\}$, and vertices and edges partitions of $C_n$:

\bigbreak

$f(v_i)=\left\{
    \begin{array}{ll}
        a & \mbox{ if } 0\leq i \leq 2k-1 \mbox{ and } i \mbox{ is even } \\
        b & \mbox{ if } 0\leq i \leq 2k-1 \mbox{ and } i \mbox{ is odd } \\
        i-2k+1 & \mbox{ if } i \geq 2k.\\
    \end{array}
\right.
$

\bigbreak

$\sigma_j(v_i)=
\left\{
    \begin{array}{ll}
        v_{i+2(j-1)\mod 2k} & \mbox{ if } i < 2k\\
        v_{i} & \mbox{ if } i \geq 2k.\\
    \end{array}
\right.$

\bigbreak

$F=\{v_i,2k\leq i\leq n-1\}$
\\$V_a=\{v_i,0\leq i \leq 2k-1 \mbox{ and } i\equiv 0 \mod 2\}$
\\$V_b=\{v_i,0\leq i \leq 2k-1 \mbox{ and } i\equiv 1 \mod 2\}$.

\bigbreak

$E_F=\{\{v_{i-1},v_{2k+i}\}\cup \{v_{2k-i},v_{2k+i}\}, 1\leq i\leq x-1\}\cup \{\{v_0,v_{2k}\},\{v_k,v_{2k}\}\}$
\\$E_{a,a}=\{\{v_i,v_{2k-i}\}, 1\leq i\leq k-1 \mbox{ and } i\equiv 0 \mod 2\}$
\\$E_{b,b}=\{\{v_i,v_{2k-i}\}, 1\leq i\leq k-1 \mbox{ and } i\equiv 1 \mod 2\}$
\\$E_{a,b}=\{\{v_i,v_{2k-1-i}\}, x-1\leq i\leq k-1\}$.

\bigbreak

The obtained graph $G$ is, as wanted, isomorphic to $C_n$. \\Indeed, for $x=1$, for $H$ a graph, if $V(H)=\{v_1,v_2,...,v_n\}$ and \\$E(H)=\{\{v_i,v_{i+1 \mod n}\},1\leq i\leq n\}$, we obviously have $H\simeq C_n$, and $G=\tau(H)$, with $\tau$ the permutation defined as follows:

$ \tau(v_i)=
\left\{
    \begin{array}{ll}
        v_{\frac{i}{2}} & \mbox{ if } i<2k \mbox{ and } i \equiv 0\mod 2\\
        v_{2k-\frac{i+1}{2}} & \mbox{ if } i<2k \mbox{ and } i \equiv 1\mod 2\\
        v_{i} & \mbox{ if } i=2k.\\
    \end{array}
\right.$

For $x>1$, the extra fixed point are just inserted in some of the edges of $C_{2k+1}$, so that we always keep $G \simeq C_n$.

\bigbreak

We now show that $f$ and $\sigma$ define a $(x+2)$-labeled packing of $k$ copies of $C_n$. As previously stated, for $x=1$, since $k$ is odd the construction gives us the same decomposition of $K_n$ as the one presented by Walecki \cite{alspach2008wonderful}. The idea of the decomposition is that all the edges that do not involve any fixed point are linking two vertices whose numbers have a unique difference. Thus, those edges do not overlap, and $\sigma$ defines a packing. Moreover, as $n-1$ is even, the parity of the indices of a vertex and its image are the same, as well as their labels. For $x>1$, we know that the added fixed points will not create any superposition of edges, as their neighbors have different labels and are never sent to themselves. More rigorously, we have:

\begin{itemize}
\item[*] Since $2(j-1)$ and $2k$ are even, $i+2(j-1)\mod 2k$ always has the same parity as $i$, and for any $v\not \in F$ and $j\neq j'$, $f(\sigma_j(v))=f(\sigma_{j'}(v))$. 

\bigbreak

\item[*] For every $p,q$ and every $\{u,v\},\{u',v'\}\in E_{p,q}$ , $\sigma_j(\{u,v\})=\sigma_{j'}(\{u',v'\})$ implies $j=j'$. 
\\Indeed, for $1\leq i,i'\leq k-1$, if $\sigma_j^*(\{v_i,v_{2k-1-i}\})=\sigma_{j'}^*(\{v_{i'},v_{2k-1-i'}\})$, then:
\\$\left\{
    \begin{array}{ll}
        i+2(j-1)=i'+2(j'-1)\mod 2k\\
        2k-1-i+2(j-1)=2k-1-i'+2(j'-1)\mod 2k.\\
    \end{array}
\right.$
\\By adding the two equations, since $k$ is odd, and $1\leq j,j'\leq k$, we get $j=j'$. Similarly, for $1\leq i,i'\leq k-1$, if $\sigma_j^*(\{v_i,v_{2k-i}\})=\sigma_{j'}^*(\{v_{i'},v_{2k-i'}\})$, $j=j'$.

\bigbreak

\item[*] $i+2(j-1)\mod 2k=i+2(j'-1)\mod 2k$ implies $j=j'$, so that for any $v\not \in F$ and $j\neq j'$, $\sigma_j(v) \neq \sigma_{j'}(v)$.

\bigbreak

\item[*] $k$ and $0$ have different parity, and so do $2k-i$ and $i-1$ for all $i$. Thus, for all $u,v$ such that there exists $x\in F \mbox{ such that } \{u,x\}, \{x,v\} \in E(C_n)$, $f(u)\neq f(v)$.
\end{itemize}

\bigbreak

In the following example, we present the packing of $k=3$ copies of $C_8$ with $4$ labels.

\vspace{1cm}
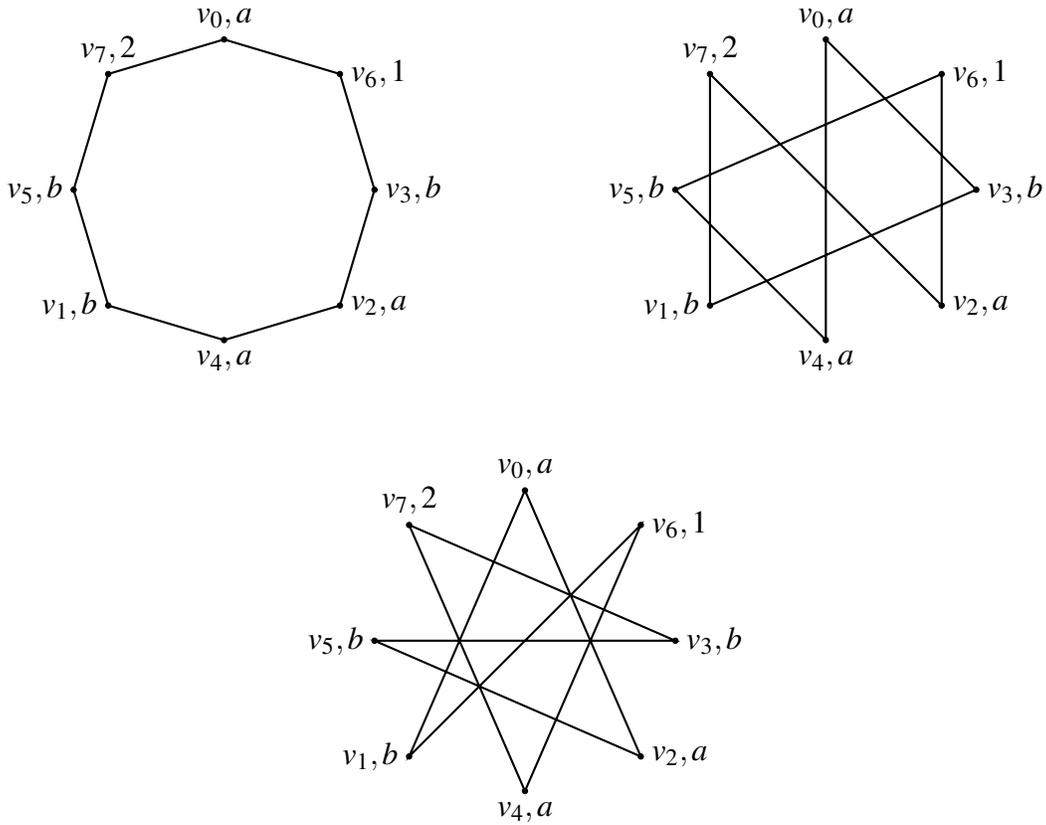
\begin{figure}[H]
	\centering
	\begin{tikzpicture}[scale=0.20]
    
    
    \filldraw (10,30) circle (5pt);
    \draw (10,30) node[right]{$v_3,b$};
    \filldraw (7.7,37.7) circle (5pt);
    \draw (7.7,37.7) node[right]{$v_6,1$};
    \filldraw (0,40) circle (5pt);
    \draw (0,40) node[above]{$v_0,a$};
    \filldraw (-7.7,37.7) circle (5pt);
    \draw (-7.7,37.7) node[above]{$v_7,2$};
    \filldraw (-10,30) circle (5pt);
    \draw (-10,30) node[left]{$v_5,b$};
    \filldraw (-7.7,22.3) circle (5pt);
    \draw (-7.7,22.3) node[left]{$v_1,b$};
    \filldraw (0,20) circle (5pt);
    \draw (0,20) node[below]{$v_4,a$};
    \filldraw (7.7,22.3) circle (5pt);
    \draw (7.7,22.3) node[right]{$v_2,a$};
    
    \draw[thick] (10,30) -- (7.7,37.7) -- (0,40) -- (-7.7,37.7) -- (-10,30) -- (-7.7,22.3) -- (0,20) -- (7.7,22.3) -- (10,30);
    
    \filldraw (50,30) circle (5pt);
    \draw (50,30) node[right]{$v_3,b$};
    \filldraw (47.7,37.7) circle (5pt);
    \draw (47.7,37.7) node[right]{$v_6,1$};
    \filldraw (40,40) circle (5pt);
    \draw (40,40) node[above]{$v_0,a$};
    \filldraw (32.3,37.7) circle (5pt);
    \draw (32.3,37.7) node[above]{$v_7,2$};
    \filldraw (30,30) circle (5pt);
    \draw (30,30) node[left]{$v_5,b$};
    \filldraw (32.3,22.3) circle (5pt);
    \draw (32.3,22.3) node[left]{$v_1,b$};
    \filldraw (40,20) circle (5pt);
    \draw (40,20) node[below]{$v_4,a$};
    \filldraw (47.7,22.3) circle (5pt);
    \draw (47.7,22.3) node[right]{$v_2,a$};

    \draw[thick] (47.7,22.3) -- (32.3,37.7) -- (32.3,22.3) -- (50,30) -- (40,40) -- (40,20) -- (30,30) -- (47.7,37.7) -- (47.7,22.3);

    \filldraw (30,0) circle (5pt);
    \draw (30,0) node[right]{$v_3,b$};
    \filldraw (27.7,7.7) circle (5pt);
    \draw (27.7,7.7) node[right]{$v_6,1$};
    \filldraw (20,10) circle (5pt);
    \draw (20,10) node[above]{$v_0,a$};
    \filldraw (12.3,7.7) circle (5pt);
    \draw (12.3,7.7) node[above]{$v_7,2$};
    \filldraw (10,0) circle (5pt);
    \draw (10,0) node[left]{$v_5,b$};
    \filldraw (12.3,-7.7) circle (5pt);
    \draw (12.3,-7.7) node[left]{$v_1,b$};
    \filldraw (20,-10) circle (5pt);
    \draw (20,-10) node[below]{$v_4,a$};
    \filldraw (27.7,-7.7) circle (5pt);
    \draw (27.7,-7.7) node[right]{$v_2,a$};

    \draw[thick] (20,-10) -- (12.3,7.7) -- (30,0) -- (10,0) -- (27.7,-7.7) -- (20,10) -- (12.3,-7.7) -- (27.7,7.7) -- (20,-10);
    
	\end{tikzpicture}
    \caption{A $4$-labeled packing of $k=3$ copies of $C_{8}$.}
	\label{plusfp}
\end{figure}

\item[b)] $k\leq x\leq 2k-1$:

\bigbreak

Here, for $x=k$ and thus $n=3k$, we construct the first copy of $C_n$ by linking successively a fixed point, a vertex of label $a$, and a vertex of label $b$. Then, we construct the $k-1$ other copies by rotating $j$ times clockwise, $1\leq j \leq k-1$, the positions of the vertices of label $a$, and $j$ times anti-clockwise the positions of the vertices of label $b$. For $x>k$, we add fixed points to the previous construction on $x-k$ of the $k$ edges that link a vertex of label $a$ to a vertex of label $b$. We thus take :
 
\bigbreak

$f(v_i)=\left\{
    \begin{array}{ll}
        \frac{i}{3}+1 & \mbox{ if } i\leq 3k \mbox{ and } i \equiv 0\mod 3\\
        a & \mbox{ if } i\leq 3k \mbox{ and } i \equiv 1\mod 3\\
        b & \mbox{ if } i\leq 3k \mbox{ and } i \equiv 2\mod 3\\
        i-2k+1 & \mbox{ if } i>3k.\\
    \end{array}
\right.
$

\bigbreak

$\sigma_j(v_i)=
\left\{
    \begin{array}{ll}
    	v_{i} & \mbox{ if } i \equiv 0\mod 3 \mbox{ or } i>3k\\
        v_{i+3(j-1)\mod 3k} & \mbox{ if } i\leq 3k \mbox{ and } i \equiv 1\mod 3\\
        v_{i-3(j-1)\mod 3k} & \mbox{ if } i\leq 3k \mbox{ and } i \equiv 2\mod 3.\\
    \end{array}
\right.$

\bigbreak 

$F=\{v_i,i \equiv 0\mod 3 \mbox{ or } i>3k\}$
\\$V_a=\{v_i,i\leq 3k \mbox{ and } i \equiv 1\mod 3\}$
\\$V_b=\{v_i,i\leq 3k \mbox{ and } i \equiv 2\mod 3\}$.

\bigbreak

$E_F=\{\{v_{i-1 \mod 3k},v_{i}\}\cup \{v_{i},v_{i+1 \mod 3k}\}, 0\leq i< 3k \mbox{ and } i\equiv 0\mod 3\}\cup 
\\ \{\{v_{3i-9k+1},v_{i}\}\cup\{v_i,v_{3i-9k+2}\}, 3k\leq i< n\}$
\\$E_{a,a}=E_{b,b}=\emptyset$
\\$E_{a,b}=\{\{v_{3i+1},v_{3i+2}\}, x-k\leq i\leq k-1\}$.

\bigbreak

For $x=k$, we have $E(G)=\{\{v_i,v_{i+1 \mod n}\}, 0\leq i\leq n-1\}$, so that $G \simeq C_n$. For $x>k$, the extra fixed point are inserted in some of the edges of $C_{3k}$, and we keep $G \simeq C_n$.

\bigbreak

With the way we defined $\sigma$ and $f$, we obviously have that all the vertices are always sent to vertices that have the same label. Now, to show that $\sigma$ defines a packing, the idea is that the edges involving fixed points will never create any overlap since their neighbors have different labels and are never sent to themselves, and the other edges either, precisely because $k$ is odd. The full proof is given next:

\bigbreak

\begin{itemize}
\item[*] Since $3(j-1)$ and $3k$ are multiples of $3$, $i+3(j-1)\mod 3k$ always has the same remainder in the euclidean division by $3$ as $i$, and so does $i-3(j-1)\mod 3k$. Thus, for all $v\not \in F$ and $j\neq j'$, $f(\sigma_j(v))=f(\sigma_{j'}(v))$. 

\bigbreak

\item[*] For any $\{u,v\},\{u',v'\}\in E_{a,b}$ , $\sigma_j(\{u,v\})=\sigma_{j'}(\{u',v'\})$ implies $j=j'$. 
\\Indeed, for $x-k\leq i\leq k-1$, if $\sigma_j^*(\{v_{3i+1},v_{3i+2}\})=\sigma_{j'}^*(\{v_{3i'+1},v_{3i'+2}\})$, then:
\\$\left\{
    \begin{array}{ll}
        3i+1+3(j-1)\mod 3k=3i'+1+3(j'-1)\mod 3k\\
        3i+2-3(j-1)\mod 3k=3i'+2-3(j'-1)\mod 3k.\\
    \end{array}
\right.$
\\By substracting the two equations, since $k$ is odd, and $1\leq j,j'\leq k$, we get $j=j'$.

\bigbreak

\item[*] $i+3(j-1)\mod 3k=i+3(j'-1)\mod 3k$ implies $j=j'$, and \\$i-3(j-1)\mod 3k=i-3(j'-1)\mod 3k$ implies $j=j'$.
\\Thus, for all $v \not \in F$ and $j\neq j'$, $\sigma_j(v) \neq \sigma_{j'}(v)$.

\bigbreak

\item[*] Since for all $i$, $i-1$ and $i+1$ have have a different remainder in the euclidean \\division by $3$, and so do $3i-9k+1$ and $3i-9k+2$, we have that for all $u,v$ such that there exists $x\in F \mbox{ such that } \{u,x\}, \{x,v\} \in E(C_n)$, $f(u)\neq f(v)$.
\end{itemize}

\bigbreak

In the following figure, we give an example of the construction for $k=3$ and $n=11$.

\vspace{1cm}
\begin{figure}[H]
	\centering
	\begin{tikzpicture}[scale=0.23]
    
    
    \filldraw (10,0) circle (5pt);
    \draw (10,0) node[right]{$v_2,b$};
    \filldraw (8.4,5.4) circle (5pt);
    \draw (8.4,5.4) node[right]{$v_9,4$};
    \filldraw (4.2,9.1) circle (5pt);
    \draw (4.2,9.1) node[above]{$v_1,a$};
    \filldraw (-1.4,9.8) circle (5pt);
    \draw (-1.4,9.8) node[above]{$v_0,1$};
    \filldraw (-6.5,7.6) circle (5pt);
    \draw (-6.5,7.6) node[left]{$v_8,b$};
    \filldraw (-9.6,2.8) circle (5pt);
    \draw (-9.6,2.8) node[left]{$v_7,a$};
    \filldraw (-9.6,-2.8) circle (5pt);
    \draw (-9.6,-2.8) node[left]{$v_6,3$};
    \filldraw (-6.5,-7.6) circle (5pt);
    \draw (-6.5,-7.6) node[below]{$v_5,b$};
    \filldraw (-1.4,-9.8) circle (5pt);
    \draw (-1.4,-9.8) node[below]{$v_{10},5$};
    \filldraw (4.2,-9.1) circle (5pt);
    \draw (4.2,-9.1) node[below]{$v_4,a$};
    \filldraw (8.4,-5.4) circle (5pt);
    \draw (8.4,-5.4) node[right]{$v_3,2$};
    
    \draw[thick] (10,0) -- (8.4,5.4) -- (4.2,9.1) -- (-1.4,9.8) -- (-6.5,7.6) -- (-9.6,2.8) -- (-9.6,-2.8) -- (-6.5,-7.6) -- (-1.4,-9.8) -- (4.2,-9.1) -- (8.4,-5.4) -- (10,0);
    
    \filldraw (50,0) circle (5pt);
    \draw (50,0) node[right]{$v_2,b$};
    \filldraw (48.4,5.4) circle (5pt);
    \draw (48.4,5.4) node[right]{$v_9,4$};
    \filldraw (44.2,9.1) circle (5pt);
    \draw (44.2,9.1) node[above]{$v_1,a$};
    \filldraw (38.6,9.8) circle (5pt);
    \draw (38.6,9.8) node[above]{$v_0,1$};
    \filldraw (33.5,7.6) circle (5pt);
    \draw (33.5,7.6) node[left]{$v_8,b$};
    \filldraw (30.4,2.8) circle (5pt);
    \draw (30.4,2.8) node[left]{$v_7,a$};
    \filldraw (30.4,-2.8) circle (5pt);
    \draw (30.4,-2.8) node[left]{$v_6,3$};
    \filldraw (33.5,-7.6) circle (5pt);
    \draw (33.5,-7.6) node[below]{$v_5,b$};
    \filldraw (38.6,-9.8) circle (5pt);
    \draw (38.6,-9.8) node[below]{$v_{10},5$};
    \filldraw (44.2,-9.1) circle (5pt);
    \draw (44.2,-9.1) node[below]{$v_4,a$};
    \filldraw (48.4,-5.4) circle (5pt);
    \draw (48.4,-5.4) node[right]{$v_3,2$};
    
    \draw[thick] (38.6,9.8) -- (44.2,-9.1) -- (48.4,5.4) -- (33.5,7.6) -- (48.4,-5.4) -- (30.4,2.8) -- (38.6,-9.8) -- (50,0) -- (30.4,-2.8) -- (44.2,9.1) -- (33.5,-7.6) -- (38.6,9.8);
    
    \filldraw (30,-30) circle (5pt);
    \draw (30,-30) node[right]{$v_2,b$};
    \filldraw (28.4,-24.6) circle (5pt);
    \draw (28.4,-24.6) node[right]{$v_9,4$};
    \filldraw (24.2,-20.9) circle (5pt);
    \draw (24.2,-20.9) node[above]{$v_1,a$};
    \filldraw (18.6,-20.2) circle (5pt);
    \draw (18.6,-20.2) node[above]{$v_0,1$};
    \filldraw (13.5,-22.4) circle (5pt);
    \draw (13.5,-22.4) node[left]{$v_8,b$};
    \filldraw (10.4,-27.2) circle (5pt);
    \draw (10.4,-27.2) node[left]{$v_7,a$};
    \filldraw (10.4,-32.8) circle (5pt);
    \draw (10.4,-32.8) node[left]{$v_6,3$};
    \filldraw (13.5,-37.6) circle (5pt);
    \draw (13.5,-37.6) node[below]{$v_5,b$};
    \filldraw (18.6,-39.8) circle (5pt);
    \draw (18.6,-39.8) node[below]{$v_{10},5$};
    \filldraw (24.2,-39.1) circle (5pt);
    \draw (24.2,-39.1) node[below]{$v_4,a$};
    \filldraw (28.4,-35.4) circle (5pt);
    \draw (28.4,-35.4) node[right]{$v_3,2$};
    
    \draw[thick] (18.6,-20.2) -- (10.4,-27.2) -- (28.4,-24.6) -- (13.5,-37.6) -- (28.4,-35.4) -- (24.2,-20.9) -- (18.6,-39.8) -- (13.5,-22.4) -- (10.4,-32.8) -- (24.2,-39.1) -- (30,-30) -- (18.6,-20.2);
    
	\end{tikzpicture}
    \caption{A $7$-labeled packing of $k=3$ copies of $C_{11}$.}
	\label{3k}
\end{figure}
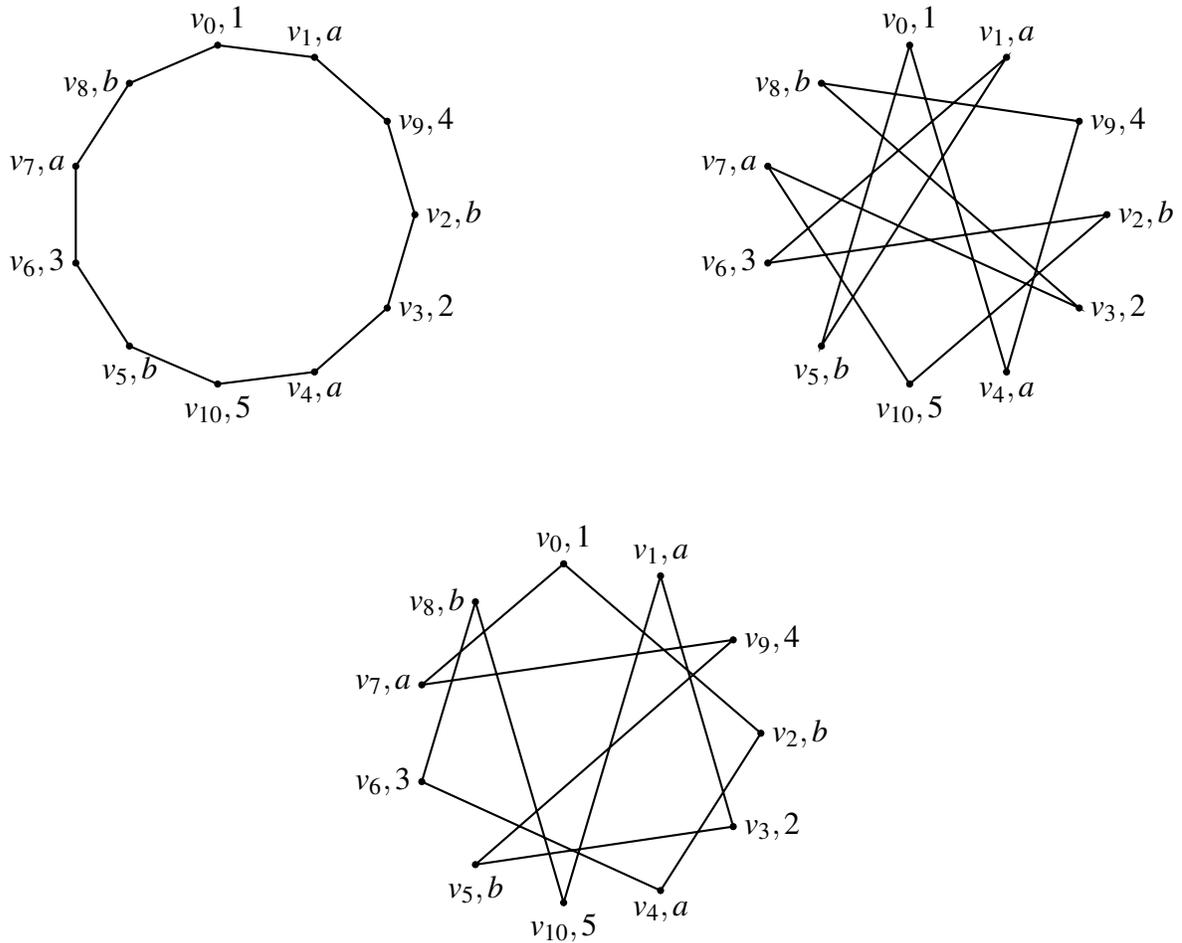

This concludes the case where $k$ is odd. 

\end{itemize}

\item[2.] $k$ is even and $x=1$:

\bigbreak

In this case, because $x=1$, $n$ is odd, and we can use a labeling of Walecki's decomposition of $K_n$ \cite{alspach2008wonderful}: We label $v_{2k}$, the fixed point, with label $1$, and every other vertex with label $b$. Then, we create the first copy of $C_n$ by joining the vertices in the following order: $$v_0,v_{n-2},v_1,v_{n-3},v_2,v_{n-4},...,v_{\frac{n-1}{2}-1},v_{\frac{n-1}{2}},v_{2k},v_0.$$ We construct the other copies by following the same order but with every number added each time to 1 modulo $n-1$. The corresponding expressions of $f$, $\sigma$, and the corresponding partitions are:

\bigbreak

$f(v_i)=\left\{
    \begin{array}{ll}
        1 & \mbox{ if } i=2k\\
        a & \mbox{ otherwise. }\\
    \end{array}
\right.
$

\bigbreak

$\sigma_j(v_i)=
\left\{
    \begin{array}{ll}
    	v_{i} & \mbox{ if } i=2k\\
        v_{i+j-1\mod 2k} & \mbox{ otherwise. }\\
    \end{array}
\right.$

\bigbreak

$F=\{v_{2k}\}$
\\$V_a=\{v_i, 0\leq i\leq 2k-1\}$.

\bigbreak

$E_F=\{\{v_{0},v_{2k}\}\cup \{v_{k},v_{2k}\}\}$
\\$E_{a,a}=\{\{v_{i},v_{2k-i}\}, 1\leq i\leq k-1\}\cup \{\{v_{i},v_{2k-1-i}\}, 0\leq i\leq k-1\}$.

\bigbreak

The obtained graph $G$ is isomorphic to $C_n$. Indeed, $G=\tau(H)$, with $\tau$ the permutation defined as follows:

$ \tau(v_i)=
\left\{
    \begin{array}{ll}
        v_{\frac{i}{2}} & \mbox{ if } i<2k \mbox{ and } i \equiv 0\mod 2\\
        v_{2k-\frac{i+1}{2}} & \mbox{ if } i<2k \mbox{ and } i \equiv 1\mod 2\\
        v_{i} & \mbox{ if } i=2k.\\
    \end{array}
\right.$

\bigbreak

We now show that $f$ and $\sigma$ define a $2$-labeled packing of $k$ copies of $C_n$. Intuitively, the construction obviously gives us the same decomposition of $K_n$ as the one presented by Walecki \cite{alspach2008wonderful}, and the labels are all the same except for the fixed point. Still, we give a more rigorous proof:

\begin{itemize}
\item[*] Since $i+j-1\mod 2k<2k$, we have, for all $v\not \in F$, $f(\sigma_j(v))=a=f(\sigma_{j'}(v))$. 

\bigbreak

\item[*] Similarly to the case 1.a), for all $p,q$ and $\{u,v\},\{u',v'\}\in E_{p,q}$ , $\sigma_j(\{u,v\})=\sigma_{j'}(\{u',v'\})$ implies $j=j'$.

\bigbreak

\item[*] Similarly to 1.a), for all $v\not \in F$ and $j\neq j'$, $\sigma_j(v) \neq \sigma_{j'}(v)$.

\bigbreak

\item[*] For all $j\neq j'$, with $1\leq j,j'\leq k$, $0+j-1\mod 2k\neq k+j'-1\mod 2k$. Thus, for all $u,v$ such that there exists $x\in F \mbox{ such that } \{u,x\}, \{x,v\} \in E(C_n)$, if $f(u)=f(v)$, then, $\sigma_j(u)\neq \sigma_{j'}(v)$ and $\sigma_j(v)\neq \sigma_{j'}(u)$.
\end{itemize}

\bigbreak

The following example, for $k=4$, gives an idea of the general construction for the case where $k$ is even and $x=1$. We can clearly see here how it corresponds to a labelling of Walecki's construction \cite{alspach2008wonderful}.
 
\vspace{1cm}
\begin{figure}[H]
	\centering
	\begin{tikzpicture}[scale=0.2]
    
    
    \filldraw (50,40) circle (5pt);
    \draw (50,40) node[right]{$v_3,a$};
    \filldraw (47.1,47.1) circle (5pt);
    \draw (47.1,47.1) node[right]{$v_2,a$};
    \filldraw (40,50) circle (5pt);
    \draw (40,50) node[above]{$v_1,a$};
    \filldraw (32.9,47.1) circle (5pt);
    \draw (32.9,47.1) node[above]{$v_0,a$};
    \filldraw (30,40) circle (5pt);
    \draw (30,40) node[left]{$v_7,a$};
    \filldraw (32.9,32.9) circle (5pt);
    \draw (32.9,32.9) node[below]{$v_6,a$};
    \filldraw (40,30) circle (5pt);
    \draw (40,30) node[below]{$v_5,a$};
    \filldraw (47.1,32.9) circle (5pt);
    \draw (47.1,32.9) node[right]{$v_4,a$};  
    \filldraw (40,41) circle (5pt);
    \draw (40.2,41.2) node[above]{$v_8,1$};
    
    \draw[thick] (32.9,47.1) -- (30,40) -- (40,50) -- (32.9,32.9) -- (47.1,47.1) -- (40,30) -- (50,40) -- (47.1,32.9) -- (40,41) -- (32.9,47.1);
    
     \filldraw (90,40) circle (5pt);
    \draw (90,40) node[right]{$v_3,a$};
    \filldraw (87.1,47.1) circle (5pt);
    \draw (87.1,47.1) node[right]{$v_2,a$};
    \filldraw (80,50) circle (5pt);
    \draw (80,50) node[above]{$v_1,a$};
    \filldraw (72.9,47.1) circle (5pt);
    \draw (72.9,47.1) node[above]{$v_0,a$};
    \filldraw (70,40) circle (5pt);
    \draw (70,40) node[left]{$v_7,a$};
    \filldraw (72.9,32.9) circle (5pt);
    \draw (72.9,32.9) node[below]{$v_6,a$};
    \filldraw (80,30) circle (5pt);
    \draw (80,30) node[below]{$v_5,a$};
    \filldraw (87.1,32.9) circle (5pt);
    \draw (87.1,32.9) node[right]{$v_4,a$};  
    \filldraw (80,41) circle (5pt);
    \draw (80,41.2) node[right]{$v_8,1$};
    
    \draw[thick] (80,50) -- (72.9,47.1) -- (87.1,47.1) -- (70,40) -- (90,40) -- (72.9,32.9) -- (87.1,32.9) -- (80,30) -- (80,41) -- (80,50);
    
    \filldraw (50,10) circle (5pt);
    \draw (50,10) node[right]{$v_3,a$};
    \filldraw (47.1,17.1) circle (5pt);
    \draw (47.1,17.1) node[right]{$v_2,a$};
    \filldraw (40,20) circle (5pt);
    \draw (40,20) node[above]{$v_1,a$};
    \filldraw (32.9,17.1) circle (5pt);
    \draw (32.9,17.1) node[above]{$v_0,a$};
    \filldraw (30,10) circle (5pt);
    \draw (30,10) node[left]{$v_7,a$};
    \filldraw (32.9,2.9) circle (5pt);
    \draw (32.9,2.9) node[below]{$v_6,a$};
    \filldraw (40,0) circle (5pt);
    \draw (40,0) node[below]{$v_5,a$};
    \filldraw (47.1,2.9) circle (5pt);
    \draw (47.1,2.9) node[right]{$v_4,a$};  
    \filldraw (40,11) circle (5pt);
    \draw (39.7,11.3) node[above]{$v_8,1$};
    
    \draw[thick] (47.1,17.1) -- (50,10) -- (40,20) -- (47.1,2.9) -- (32.9,17.1) -- (40,0) -- (30,10) -- (32.9,2.9) -- (40,11) -- (47.1,17.1);
    
     \filldraw (90,10) circle (5pt);
    \draw (90,10) node[right]{$v_3,a$};
    \filldraw (87.1,17.1) circle (5pt);
    \draw (87.1,17.1) node[right]{$v_2,a$};
    \filldraw (80,20) circle (5pt);
    \draw (80,20) node[above]{$v_1,a$};
    \filldraw (72.9,17.1) circle (5pt);
    \draw (72.9,17.1) node[above]{$v_0,a$};
    \filldraw (70,10) circle (5pt);
    \draw (70,10) node[left]{$v_7,a$};
    \filldraw (72.9,2.9) circle (5pt);
    \draw (72.9,2.9) node[below]{$v_6,a$};
    \filldraw (80,0) circle (5pt);
    \draw (80,0) node[below]{$v_5,a$};
    \filldraw (87.1,2.9) circle (5pt);
    \draw (87.1,2.9) node[right]{$v_4,a$};  
    \filldraw (81,11) circle (5pt);
    \draw (82.2,11) node[above]{$v_8,1$};
    
    \draw[thick] (90,10) -- (87.1,2.9) -- (87.1,17.1) -- (80,0) -- (80,20) -- (72.9,2.9) -- (72.9,17.1) --  (70,10) -- (81,11) -- (90,10);
    
	\end{tikzpicture}
    \caption{A $2$-labeled packing of $k=4$ copies of $C_{9}$.}
	\label{k2}
\end{figure}
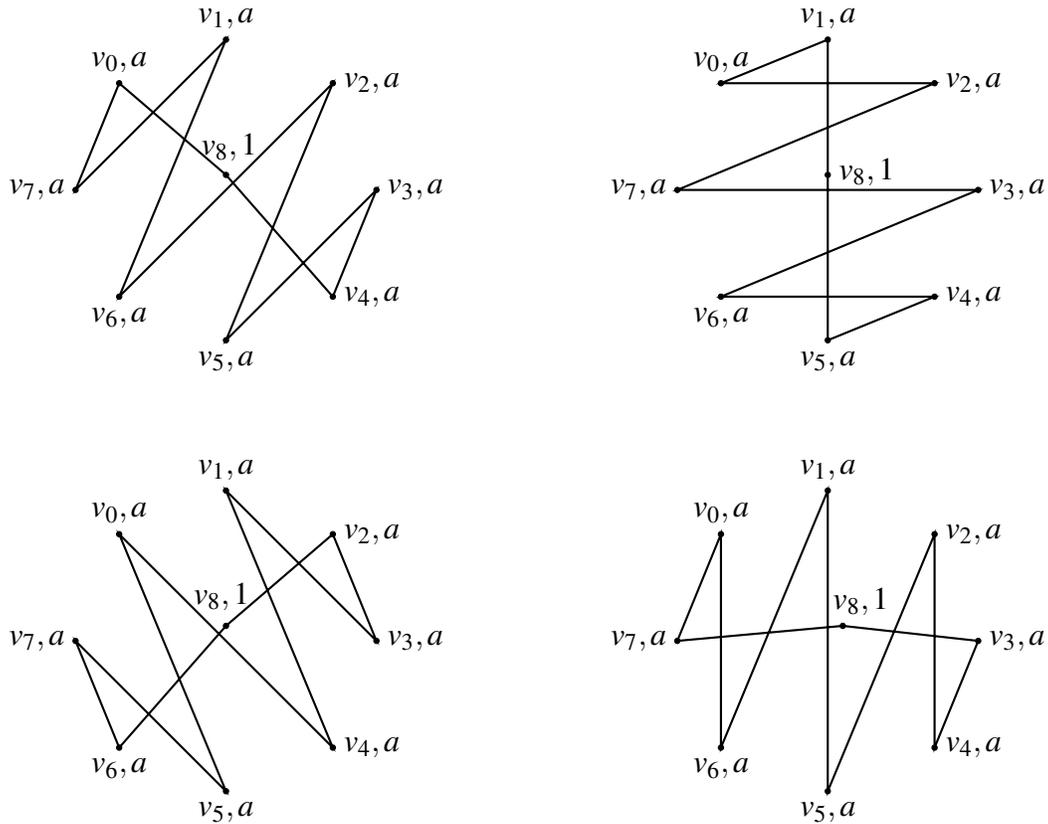

\bigbreak

\item[3.] $k$ is even and $x>1$:

\bigbreak

\begin{itemize}[leftmargin=*]

\bigbreak

\item[a)] $2\leq x\leq k-1$:

\bigbreak

The construction for this case seems more complicated, but it was actually inspired by the ones following Walecki's construction \cite{alspach2008wonderful}. For $x=2$, the first copy of $C_n$ follows the following sequence of vertices:

\bigbreak

$C_n=
\left\{
    \begin{array}{ll}
        (v_0,v_2,v_{n-4},v_4,v_{n-6},v_6...v_{\frac{k}{2}},v_{n-2},v_1,v_3,v_{n-3},v_5,v_{n-5}...& \\
        v_{\frac{k}{2}+1},v_{\frac{3k}{2}},v_{\frac{k}{2}+3},v_{\frac{3k}{2}-2},v_{\frac{k}{2}+5},v_{\frac{3k}{2}-4}...v_{\frac{3k}{2}+1},v_{n-1},v_0) & \mbox{ if } k\equiv 0\mod 4\\
        (v_0,v_2,v_{n-4},v_4,v_{n-6},v_6...v_{\frac{3k}{2}+1},v_{n-2},v_{n-3},v_1,v_{n-5},v_3...&\\
        v_{\frac{3k}{2}},v_{\frac{k}{2}+1},v_{\frac{3k}{2}-2},v_{\frac{k}{2}+3},v_{\frac{3k}{2}-4},v_{\frac{k}{2}+5}...v_{\frac{k}{2}},v_{n-1},v_0) & \mbox{ if } k\equiv 2\mod 4.\\
    \end{array}
\right.$

\bigbreak

We label $v_{2k}$ and $v_{2k+1}$, the fixed points, with respective labels $2k$ and $2k+1$, and the other vertices with $a$ if their number is even and $b$ if it is odd. Then, we create the other copies by adding each time $2$ modulo $n-2$ to every number but $2k$ and $2k+1$ in this sequence. For $x>2$, we add extra fixed points in some edges that are linking a vertex of label $a$ and a vertex of label $b$. Thus, $G \simeq C_n$. More formally, we take:

\bigbreak

$f(v_i)=\left\{
    \begin{array}{ll}
        a & \mbox{ if } 0\leq i \leq 2k-1 \mbox{ and } i \mbox{ is even }\\
        b & \mbox{ if } 0\leq i \leq 2k-1 \mbox{ and } i \mbox{ is odd }\\
        i-2k+1 & \mbox{ if } i \geq 2k.\\
    \end{array}
\right.
$

\bigbreak

$\sigma_j(v_i)=
\left\{
    \begin{array}{ll}
        v_{i+2(j-1)\mod 2k} & \mbox{ if } i < 2k\\
        v_{i} & \mbox{ if } i \geq 2k.\\
    \end{array}
\right.$

\bigbreak

$F=\{v_i,2k\leq i\leq n-1\}$
\\$V_a=\{v_i,0\leq i \leq 2k-1 \mbox{ and } i\equiv 0 \mod 2\}$
\\$V_b=\{v_i,0\leq i \leq 2k-1 \mbox{ and } i\equiv 1 \mod 2\}$.

\bigbreak

$E_F=\{\{v_{\alpha(k,i)},v_{k+3+i}\}\cup \{v_{k+3+i},v_{\alpha(k,i+1)}\}, k-1\leq i\leq k+x-4\}\cup \\ \{\{v_{\beta(k,1)},v_{2k}\},\{v_{(-1)^{\frac{k}{2}}\mod 2k},v_{2k}\}\}\cup \{\{v_0,v_{2k+1}\},\{v_{\beta(k,0)},v_{2k+1}\}\}$
\\$E_{a,a}=\{\{v_{\gamma(k,i)} ,v_{\gamma(k,i+1)}\}, 0\leq i\leq \frac{k}{2}-2\}$
\\$E_{b,b}=\{\{v_{(-1)^{\frac{k}{2}}+\gamma(k,i)\mod 2k},v_{(-1)^{\frac{k}{2}}+\gamma(k,i+1) \mod 2k} \}, 0\leq i\leq \frac{k}{2}-2\}$
\\$E_{a,b}=\{\{v_{\alpha(k,i)},v_{\alpha(k,i+1)}\}, k+x-3\leq i\leq 2k-2\}$.

\bigbreak

With:

\bigbreak

$\alpha(k,i)=(-1)^{\frac{k}{2}}(1+2\lceil \frac{\frac{k}{2}-1}{2}\rceil+(-1)^{\frac{(-1)^i+1}{2}}.2\lfloor \frac{i-k+1}{2}\rfloor)+\frac{(-1)^i+1}{2}.(k+(-1)^{\frac{k}{2}-1})\mod 2k$

$\beta(k,i)=\frac{k}{2}+\frac{(-1)^{\frac{k}{2}+i}+1}{2}(k+1) \mod 2k$

$\gamma(k,i)=2.(-1)^{i}\lceil \frac{i+1}{2}\rceil \mod 2k$.

\bigbreak

The obtained graph $G$ is isomorphic to $C_n$, as for $x=2$, $G=\tau(H)$, with $\tau$ the permutation defined as follows:

$ \tau(v_i)=
\left\{
    \begin{array}{ll}
        v_{2.(-1)^{i+1}\lceil \frac{i}{2}\rceil \mod 2k} & \mbox{ if } 0\leq i\leq \frac{k}{2}-1\\
        v_{(-1)^{\frac{k}{2}}+2.(-1)^{i+1-\frac{k}{2}}\lceil \frac{i-\frac{k}{2}}{2}\rceil \mod 2k} & \mbox{ if } \frac{k}{2}\leq i\leq k-1\\
        v_{\alpha(k,i)} & \mbox{ if } k\leq i\leq 2k-1\\
        v_{\frac{k}{2}} & \mbox{ if } i=2k\\
        v_{i} & \mbox{ if } i=2k+1.\\
    \end{array}
\right.$

For $x>2$, the extra fixed points are inserted in some edges of $C_{2k+2}$ who are linking a vertex of label $a$ and a vertex of label $b$. Thus, $G \simeq C_n$.

\bigbreak

Since $2k$ is even, all the vertices are always sent to vertices that have the same label. The neighbors of the fixed points always have different labels and are never sent to themselves, so that their edges will not create any problem for the packing. To see that the other edges do not overlap either, the principle is similar to the one of Walecki's construction \cite{alspach2008wonderful}. Indeed, for every type of edge, or differently said, for every couple of labels an edge can link, we chose the difference of the numbers of the vertices to be unique modulo $2k$ in the sequence, so that no other edge can overlap it. To be more precise, we show that the four previously mentioned properties hold:

\bigbreak

\begin{itemize}
\item[*] Similarly to the case 1.a), for all $v\not \in F$ and $j\neq j'$, $f(\sigma_j(v))=f(\sigma_{j'}(v))$. 

\bigbreak

\item[*] For all $p,q$ and for all $\{u,v\},\{u',v'\}\in E_{p,q}$ , $\sigma_j(\{u,v\})=\sigma_{j'}(\{u',v'\}) $ implies $j=j'$. 
\\Indeed, for $p=q=a$, for $0\leq i\leq \frac{k}{2}-2$, suppose that:
$$\sigma_j^*(\{v_{\gamma(k,i)},v_{\gamma(k,i+1)}\})=\sigma_{j'}^*(\{v_{\gamma(k,i')},v_{\gamma(k,i'+1)}\}).$$
Then, by adding the two equations resulting from the vertices, we get:
\small
$$2((-1)^{i+1}\lceil \frac{i}{2}\rceil+(-1)^{i}\lceil \frac{i+1}{2}\rceil) +2j \equiv 2((-1)^{i+1}\lceil \frac{i}{2}\rceil+(-1)^{i'}\lceil \frac{i'+1}{2}\rceil) +2j' \mod 2k.$$
\normalsize
\\Since $k$ is even, this gives $j=j'$. The case where $p=q=b$ is similar. 
\\For $p=a$ and $q=b$, for $k+x-3\leq i\leq 2k-2$, suppose that $$\sigma_j^*(\{v_{\alpha(k,i)},v_{\alpha(k,i+1)}\})=\sigma_{j'}^*(\{v_{\alpha(k,i')},v_{\alpha(k,i'+1)}\}).$$
Then, by adding the two equations, we get:

\bigbreak

$2.((-1)^{\frac{(-1)^i+1}{2}}\lfloor\frac{i-k+1}{2}\rfloor + (-1)^{\frac{(-1)^{i+1}+1}{2}}\lfloor\frac{i-k+2}{2}\rfloor) +4j \equiv$
\\$2.((-1)^{\frac{(-1)^{i'}+1}{2}}\lfloor\frac{i'-k+1}{2}\rfloor + (-1)^{\frac{(-1)^{i'+1}+1}{2}}\lfloor\frac{i'-k+2}{2}\rfloor) +4j'\mod 2k.$

\bigbreak

Since $k$ is even, this also gives $j=j'$.

\bigbreak

\item[*] Similarly to 1.a), for all $v\not \in F$ and $j\neq j'$, $\sigma_j(v) \neq \sigma_{j'}(v)$.

\bigbreak

\item[*] Since $k+(-1)^{\frac{k}{2}-1}$ is odd, $\alpha(k,i)$ and $\alpha(k,i+1)$ have different parity for all $i$ such that $k-1\leq i\leq k+x-4$. Plus, $\beta(k,1)$ is even while $(-1)^{\frac{k}{2}}\mod 2k$ is odd, and $0$ is even while $\beta(k,0)$ is odd. Thus, for all $u,v$ such that there exists $x\in F \mbox{ such that } \{u,x\}, \{x,v\} \in E(C_n)$, $f(u)\neq f(v)$.
\end{itemize}

\bigbreak

This construction is illustrated in the following figure, for $k=4$  and $n=10$.

\bigbreak

\vspace{1cm}
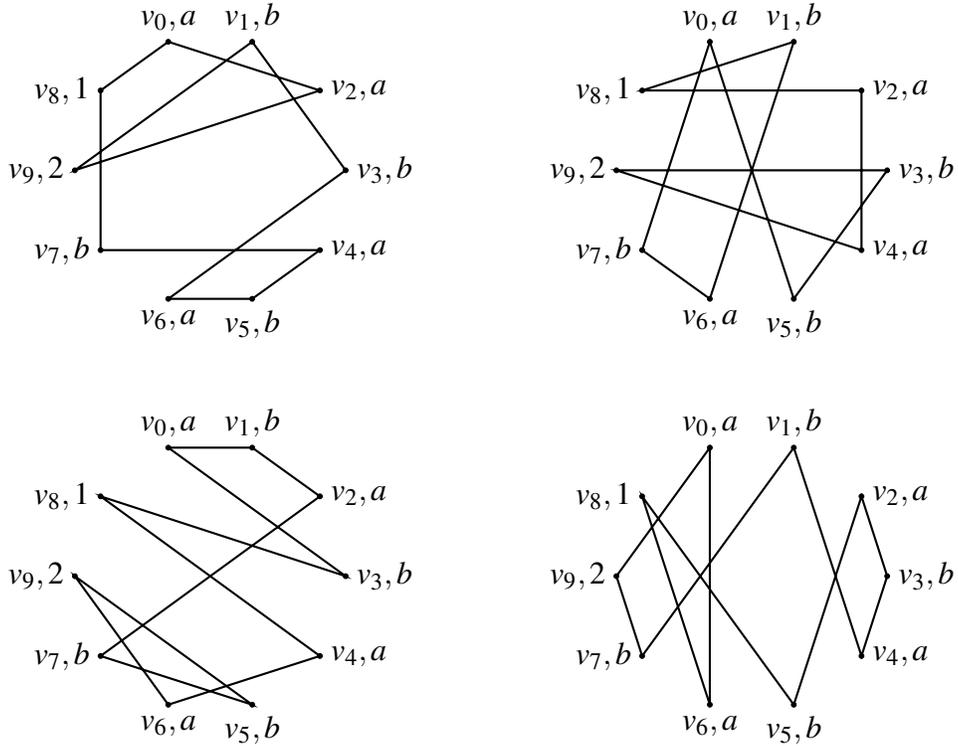
\begin{figure}[H]
	\centering
	\begin{tikzpicture}[scale=0.18]
    
    
    \filldraw (50,40) circle (5pt);
    \draw (50,40) node[right]{$v_3,b$};
    \filldraw (48.1,45.9) circle (5pt);
    \draw (48.1,45.9) node[right]{$v_2,a$};
    \filldraw (43.1,49.5) circle (5pt);
    \draw (43.1,49.5) node[above]{$v_1,b$};
    \filldraw (36.9,49.5) circle (5pt);
    \draw (36.9,49.5) node[above]{$v_0,a$};
    \filldraw (31.9,45.9) circle (5pt);
    \draw (31.9,45.9) node[left]{$v_8,1$};
    \filldraw (30,40) circle (5pt);
    \draw (30,40) node[left]{$v_9,2$};
    \filldraw (31.9,34.1) circle (5pt);
    \draw (31.9,34.1) node[left]{$v_7,b$};
    \filldraw (36.9,30.5) circle (5pt);
    \draw (36.9,30.5) node[below]{$v_6,a$};
    \filldraw (43.1,30.5) circle (5pt);
    \draw (43.1,30.5) node[below]{$v_5,b$};
    \filldraw (48.1,34.1) circle (5pt);
    \draw (48.1,34.1) node[right]{$v_4,a$};
    
    \draw[thick] (36.9,49.5) -- (48.1,45.9) -- (30,40) -- (43.1,49.5) -- (50,40) -- (36.9,30.5) -- (43.1,30.5) -- (48.1,34.1) -- (31.9,34.1) -- (31.9,45.9) -- (36.9,49.5);
    
     \filldraw (90,40) circle (5pt);
    \draw (90,40) node[right]{$v_3,b$};
    \filldraw (88.1,45.9) circle (5pt);
    \draw (88.1,45.9) node[right]{$v_2,a$};
    \filldraw (83.1,49.5) circle (5pt);
    \draw (83.1,49.5) node[above]{$v_1,b$};
    \filldraw (76.9,49.5) circle (5pt);
    \draw (76.9,49.5) node[above]{$v_0,a$};
    \filldraw (71.9,45.9) circle (5pt);
    \draw (71.9,45.9) node[left]{$v_8,1$};
    \filldraw (70,40) circle (5pt);
    \draw (70,40) node[left]{$v_9,2$};
    \filldraw (71.9,34.1) circle (5pt);
    \draw (71.9,34.1) node[left]{$v_7,b$};
    \filldraw (76.9,30.5) circle (5pt);
    \draw (76.9,30.5) node[below]{$v_6,a$};
    \filldraw (83.1,30.5) circle (5pt);
    \draw (83.1,30.5) node[below]{$v_5,b$};
    \filldraw (88.1,34.1) circle (5pt);
    \draw (88.1,34.1) node[right]{$v_4,a$};
    
    \draw[thick] (88.1,45.9) -- (88.1,34.1) -- (70,40) -- (90,40) -- (83.1,30.5) -- (76.9,49.5) -- (71.9,34.1) -- (76.9,30.5) -- (83.1,49.5) -- (71.9,45.9) -- (88.1,45.9);
    
    \filldraw (50,10) circle (5pt);
    \draw (50,10) node[right]{$v_3,b$};
    \filldraw (48.1,15.9) circle (5pt);
    \draw (48.1,15.9) node[right]{$v_2,a$};
    \filldraw (43.1,19.5) circle (5pt);
    \draw (43.1,19.5) node[above]{$v_1,b$};
    \filldraw (36.9,19.5) circle (5pt);
    \draw (36.9,19.5) node[above]{$v_0,a$};
    \filldraw (31.9,15.9) circle (5pt);
    \draw (31.9,15.9) node[left]{$v_8,1$};
    \filldraw (30,10) circle (5pt);
    \draw (30,10) node[left]{$v_9,2$};
    \filldraw (31.9,4.1) circle (5pt);
    \draw (31.9,4.1) node[left]{$v_7,b$};
    \filldraw (36.9,0.5) circle (5pt);
    \draw (36.9,0.5) node[below]{$v_6,a$};
    \filldraw (43.1,0.5) circle (5pt);
    \draw (43.1,0.5) node[below]{$v_5,b$};
    \filldraw (48.1,4.1) circle (5pt);
    \draw (48.1,4.1) node[right]{$v_4,a$};
    
    \draw[thick] (48.1,4.1) -- (36.9,0.5) -- (30,10) -- (43.1,0.5) -- (31.9,4.1) -- (48.1,15.9) -- (43.1,19.5) -- (36.9,19.5) -- (50,10) -- (31.9,15.9) -- (48.1,4.1);
    
     \filldraw (90,10) circle (5pt);
    \draw (90,10) node[right]{$v_3,b$};
    \filldraw (88.1,15.9) circle (5pt);
    \draw (88.1,15.9) node[right]{$v_2,a$};
    \filldraw (83.1,19.5) circle (5pt);
    \draw (83.1,19.5) node[above]{$v_1,b$};
    \filldraw (76.9,19.5) circle (5pt);
    \draw (76.9,19.5) node[above]{$v_0,a$};
    \filldraw (71.9,15.9) circle (5pt);
    \draw (71.9,15.9) node[left]{$v_8,1$};
    \filldraw (70,10) circle (5pt);
    \draw (70,10) node[left]{$v_9,2$};
    \filldraw (71.9,4.1) circle (5pt);
    \draw (71.9,4.1) node[left]{$v_7,b$};
    \filldraw (76.9,0.5) circle (5pt);
    \draw (76.9,0.5) node[below]{$v_6,a$};
    \filldraw (83.1,0.5) circle (5pt);
    \draw (83.1,0.5) node[below]{$v_5,b$};
    \filldraw (88.1,4.1) circle (5pt);
    \draw (88.1,4.1) node[right]{$v_4,a$};
    
    \draw[thick] (76.9,0.5) -- (76.9,19.5) -- (70,10) -- (71.9,4.1) -- (83.1,19.5) -- (88.1,4.1) -- (90,10) --  (88.1,15.9) -- (83.1,0.5) -- (71.9,15.9) -- (76.9,0.5);
    
	\end{tikzpicture}
    \caption{A $4$-labeled packing of $k=4$ copies of $C_{10}$.}
	\label{Walecki2}
\end{figure}

\bigbreak

\item[b)] $k\leq x\leq 2k-1$:

\bigbreak

For this case, we also got inspired by Walecki's construction \cite{alspach2008wonderful}, but with a simpler construction. For $x=k$, we build a $(k+2)$-labeled packing of $G$ in the following way: We label the vertices of number $i\leq 2k$ by their number, and the other vertices with $a$ if their number is even and $b$ if their number is odd. We create the first copy of $C_n$ by joining the vertices in the following order: 

\bigbreak

$v_{2k},v_0,v_{2k+1},v_1,...,v_{3k-1},v_{k-1},v_{2k-2},v_{k+1},v_{2k-4},v_{k+3},v_{2k-6},...,v_{2k-k},v_{k+(k-1)},v_{2k}.$

\bigbreak

The other copies are created by adding $2$ modulo $n-2$ to the numbers of the $a$ and $b$ label vertices of this sequence. For $x>k$, we add fixed points on the edges linking a vertex of label $a$ to a vertex of label $b$. We have:

$f(v_i)=\left\{
    \begin{array}{ll}
        a & \mbox{ if } 0\leq i \leq 2k-1 \mbox{ and } i \mbox{ is even }\\
        b & \mbox{ if } 0\leq i \leq 2k-1 \mbox{ and } i \mbox{ is odd }\\
        i-2k+1 & \mbox{ if } i \geq 2k.\\
    \end{array}
\right.
$

\bigbreak

$\sigma_j(v_i)=
\left\{
    \begin{array}{ll}
        v_{i+2(j-1)\mod 2k} & \mbox{ if } i < 2k\\
        v_{i} & \mbox{ if } i \geq 2k.\\
    \end{array}
\right.$

\bigbreak

$F=\{v_i,2k\leq i\leq n-1\}$
\\$V_a=\{v_i,0\leq i \leq 2k-1 \mbox{ and } i\equiv 0 \mod 2\}$
\\$V_b=\{v_i,0\leq i \leq 2k-1 \mbox{ and } i\equiv 1 \mod 2\}$.

\bigbreak

$E_F=\{\{v_{2k+i},v_{i-1 \mod 2k}\}\cup \{v_{2k+i},v_{i}\}, 0\leq i\leq k-1\}\cup
\\ \{\{v_{4k-2\lfloor\frac{i}{2}\rfloor-2},v_{k+i}\},\{v_{k+i},v_{2\lfloor\frac{i}{2}\rfloor+(-1)^{i+1}-k}\}, 2k\leq i\leq k+x-1\}$
\\$E_{a,a}=E_{b,b}=\emptyset$
\\$E_{a,b}=\{\{v_{4k-2\lfloor\frac{i}{2}\rfloor-2},v_{2\lfloor\frac{i}{2}\rfloor+(-1)^{i+1}-k}\}, k+x\leq i\leq 3k-1\}$.

\bigbreak

For $x=k$, $G=\tau(H)$, with $\tau$ the permutation defined as follows, so that $G$ is isomorphic to $C_n$.

$ \tau(v_i)=
\left\{
    \begin{array}{ll}
        v_{2k+\frac{i}{2}} & \mbox{ if } 0\leq i\leq 2k-1, i\equiv 0\mod 2\\
        v_{\frac{i-1}{2}} & \mbox{ if } 0\leq i\leq 2k-1, i\equiv 1\mod 2\\
        v_{4k-i-2} & \mbox{ if } 2k\leq i\leq 3k-1,i\equiv 0\mod 2\\
        v_{i-k} & \mbox{ if } 2k\leq i\leq 3k-1,i\equiv 1\mod 2.\\
    \end{array}
\right.$

For $x>2$, the extra fixed points are inserted in some edges of $C_{3k}$.

\bigbreak

In this construction, all the fixed points have one neighbor with label $a$ and one neighbor with label $b$, that are never sent to themselves or each other. The edges involving them will thus never overlap. The other edges link a vertex of label $a$ with a vertex of label $b$, and the differences between their numbers are all distinct. Plus, as $2k$ is even, the parity of the vertices remains the same in all the copies, so do their labels. To be more precise, we have:

\bigbreak

\begin{itemize}
\item[*] Similarly to the case 1.a), for all $v\not \in F$ and $j\neq j'$, $f(\sigma_j(v))=f(\sigma_{j'}(v))$. 

\bigbreak

\item[*] For all $p,q$ and for all $\{u,v\},\{u',v'\}\in E_{p,q}$ , $\sigma_j(\{u,v\})=\sigma_{j'}(\{u',v'\})$ implies $j=j'$. 
\\Indeed, suppose, for a given $i$, $k+x\leq i\leq 3k-1$, that:
$$\sigma_j^*(\{v_{4k-2\lfloor\frac{i}{2}\rfloor-2},v_{2\lfloor\frac{i}{2}\rfloor+(-1)^{i+1}-k}\})=\sigma_{j'}^*(\{v_{4k-2\lfloor\frac{i'}{2}\rfloor-2},v_{2\lfloor\frac{i'}{2}\rfloor+(-1)^{i'+1}-k}\}).$$
Then, by adding the two resulting equations, we get:
$$4j+(-1)^{i+1}\mod 2k=4j'+(-1)^{i'+1}\mod 2k.$$
Since the parity of both sides of this equality must be the same, this gives $j=j'$.

\bigbreak

\item[*] Similarly to the case 1.a), for all $v\not \in F$ and $j\neq j'$, $\sigma_j(v) \neq \sigma_{j'}(v)$.

\bigbreak

\item[*] Since for all $i$ with $2k\leq i\leq k+x-1$ ,$4k-2\lfloor\frac{i}{2}\rfloor-2$ is even, and $2\lfloor\frac{i}{2}\rfloor+(-1)^{i+1}-k$ is odd, they have different parity, and so do $i-1\mod 2k$ and $i$ for all $i$ with $0\leq i\leq k-1$. We therefore have, for all $u,v$ such that there exists $x\in F \mbox{ such that } \{u,x\}, \{x,v\} \in E(C_n)$, $f(u)\neq f(v)$.
\end{itemize}

\bigbreak

We present this last construction in the following figure, that takes the example of $k=4$ and $n=12$

\bigbreak

\vspace{1cm}
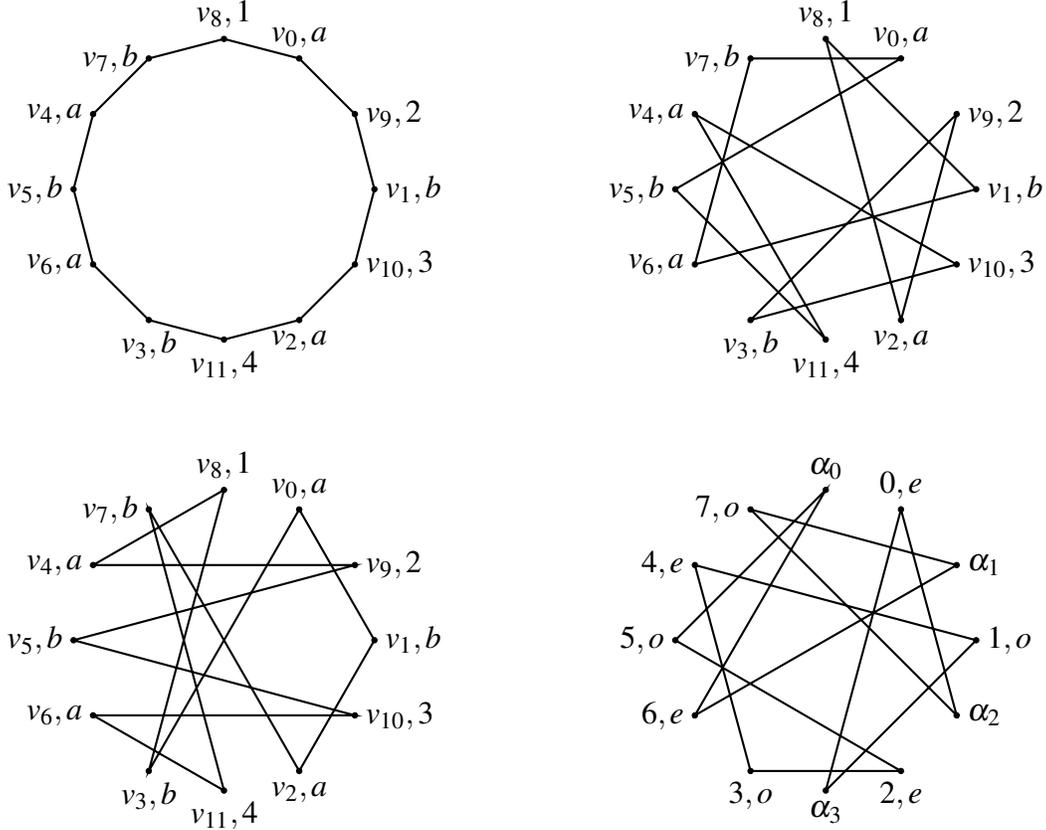
\begin{figure}[H]
	\centering
	\begin{tikzpicture}[scale=0.2]
    
    
    \filldraw (50,40) circle (5pt);
    \draw (50,40) node[right]{$v_1,b$};
    \filldraw (48.7,45) circle (5pt);
    \draw (48.7,45) node[right]{$v_9,2$};
    \filldraw (45,48.7) circle (5pt);
    \draw (45,48.7) node[above]{$v_0,a$};
    \filldraw (40,50) circle (5pt);
    \draw (40,50) node[above]{$v_8,1$};
    \filldraw (35,48.7) circle (5pt);
    \draw (35,48.7) node[left]{$v_7,b$};
    \filldraw (31.3,45) circle (5pt);
    \draw (31.3,45) node[left]{$v_4,a$};
    \filldraw (30,40) circle (5pt);
    \draw (30,40) node[left]{$v_5,b$};
    \filldraw (31.3,35) circle (5pt);
    \draw (31.3,35) node[left]{$v_6,a$};
    \filldraw (35,31.3) circle (5pt);
    \draw (35,31.3) node[below]{$v_3,b$};
    \filldraw (40,30) circle (5pt);
    \draw (40,30) node[below]{$v_{11},4$};
    \filldraw (45,31.3) circle (5pt);
    \draw (45,31.3) node[below]{$v_2,a$};
    \filldraw (48.7,35) circle (5pt);
    \draw (48.7,35) node[right]{$v_{10},3$};
    
    \draw[thick] (40,30) -- (45,31.3) -- (48.7,35) -- (50,40) -- (48.7,45) -- (45,48.7) -- (40,50) -- (35,48.7) -- (31.3,45) -- (30,40) -- (31.3,35) -- (35,31.3) -- (40,30);
    
    \filldraw (90,40) circle (5pt);
    \draw (90,40) node[right]{$v_1,b$};
    \filldraw (88.7,45) circle (5pt);
    \draw (88.7,45) node[right]{$v_9,2$};
    \filldraw (85,48.7) circle (5pt);
    \draw (85,48.7) node[above]{$v_0,a$};
    \filldraw (80,50) circle (5pt);
    \draw (80,50) node[above]{$v_8,1$};
    \filldraw (75,48.7) circle (5pt);
    \draw (75,48.7) node[left]{$v_7,b$};
    \filldraw (71.3,45) circle (5pt);
    \draw (71.3,45) node[left]{$v_4,a$};
    \filldraw (70,40) circle (5pt);
    \draw (70,40) node[left]{$v_5,b$};
    \filldraw (71.3,35) circle (5pt);
    \draw (71.3,35) node[left]{$v_6,a$};
    \filldraw (75,31.3) circle (5pt);
    \draw (75,31.3) node[below]{$v_3,b$};
    \filldraw (80,30) circle (5pt);
    \draw (80,30) node[below]{$v_{11},4$};
    \filldraw (85,31.3) circle (5pt);
    \draw (85,31.3) node[below]{$v_2,a$};
    \filldraw (88.7,35) circle (5pt);
    \draw (88.7,35) node[right]{$v_{10},3$};
    
    \draw[thick] (80,30) -- (71.3,45) -- (88.7,35) -- (75,31.3) -- (88.7,45) -- (85,31.3) -- (80,50) -- (90,40) -- (71.3,35) -- (75,48.7) -- (85,48.7) -- (70,40) -- (80,30);
    
        \filldraw (50,10) circle (5pt);
    \draw (50,10) node[right]{$v_1,b$};
    \filldraw (48.7,15) circle (5pt);
    \draw (48.7,15) node[right]{$v_9,2$};
    \filldraw (45,18.7) circle (5pt);
    \draw (45,18.7) node[above]{$v_0,a$};
    \filldraw (40,20) circle (5pt);
    \draw (40,20) node[above]{$v_8,1$};
    \filldraw (35,18.7) circle (5pt);
    \draw (35,18.7) node[left]{$v_7,b$};
    \filldraw (31.3,15) circle (5pt);
    \draw (31.3,15) node[left]{$v_4,a$};
    \filldraw (30,10) circle (5pt);
    \draw (30,10) node[left]{$v_5,b$};
    \filldraw (31.3,5) circle (5pt);
    \draw (31.3,5) node[left]{$v_6,a$};
    \filldraw (35,1.3) circle (5pt);
    \draw (35,1.3) node[below]{$v_3,b$};
    \filldraw (40,0) circle (5pt);
    \draw (40,0) node[below]{$v_{11},4$};
    \filldraw (45,1.3) circle (5pt);
    \draw (45,1.3) node[below]{$v_2,a$};
    \filldraw (48.7,5) circle (5pt);
    \draw (48.7,5) node[right]{$v_{10},3$};
    
    \draw[thick] (40,0) -- (31.3,5) -- (48.7,5) -- (30,10) -- (48.7,15) -- (31.3,15) -- (40,20) -- (35,1.3) -- (45,18.7) -- (50,10) -- (45,1.3) -- (35,18.7) -- (40,0);
    
    \filldraw (90,10) circle (5pt);
    \draw (90,10) node[right]{$1,o$};
    \filldraw (88.7,15) circle (5pt);
    \draw (88.7,15) node[right]{$\alpha_1$};
    \filldraw (85,18.7) circle (5pt);
    \draw (85,18.7) node[above]{$0,e$};
    \filldraw (80,20) circle (5pt);
    \draw (80,20) node[above]{$\alpha_0$};
    \filldraw (75,18.7) circle (5pt);
    \draw (75,18.7) node[left]{$7,o$};
    \filldraw (71.3,15) circle (5pt);
    \draw (71.3,15) node[left]{$4,e$};
    \filldraw (70,10) circle (5pt);
    \draw (70,10) node[left]{$5,o$};
    \filldraw (71.3,5) circle (5pt);
    \draw (71.3,5) node[left]{$6,e$};
    \filldraw (75,1.3) circle (5pt);
    \draw (75,1.3) node[below]{$3,o$};
    \filldraw (80,0) circle (5pt);
    \draw (80,0) node[below]{$\alpha_3$};
    \filldraw (85,1.3) circle (5pt);
    \draw (85,1.3) node[below]{$2,e$};
    \filldraw (88.7,5) circle (5pt);
    \draw (88.7,5) node[right]{$\alpha_2$};
    
    \draw[thick] (80,0) -- (85,18.7) -- (88.7,5) -- (75,18.7) -- (88.7,15) -- (71.3,5) -- (80,20) -- (70,10) -- (85,1.3) -- (75,1.3) -- (71.3,15) -- (90,10) -- (80,0);
    
	\end{tikzpicture}
    \caption{A $6$-labeled packing of $k=4$ copies of $C_{12}$.}
	\label{3k2}
\end{figure}

\end{itemize}
\end{itemize}

This concludes case 3), and thus the entire proof of Theorem \ref{borne_sup_lemma}.
\end{proof}

\bigbreak

Concerning the upper bound given by Conjecture \ref{THEconjecture}, we have already seen, with a counter-example, that it did not hold in general. That being said, we still found some sufficient conditions for it to hold. The following lemma helped in the process of finding those conditions:

\begin{lemma}  Let $G$ be the cycle $C_n$ of order $n=2k+x$, $k\geq 2$, $1\leq x \leq 2k-1$. Let $f$ be a $p$-labeling of $k$ copies of $G$. Let $q=\min \limits_{i\in \{1,...,p\}}|\{v\in V(C_n) : f(v)=i\}|$ be the minimum, over all $p$ labels, of the number of vertices that have this label. Then, there exists in $G$ a set of at least $2k$ vertices that are associated, together, to at most $2q$ labels.
\label{2klemma}
\end{lemma}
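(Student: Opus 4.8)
\emph{Approach.} The plan is to distill the packing down to two numerical inequalities about the sizes of the label classes, and then finish with a short case analysis on how small the smallest class is. Write $n_i$ for the number of vertices of $C_n$ carrying label $i$, so $\sum_{i=1}^{p} n_i = n = 2k+x$ and $q=\min_i n_i$, and let $\sigma_1,\dots,\sigma_k$ be injections realising the $p$-labeled packing. The label condition $f(\sigma_j(v))=f(\sigma_{j'}(v))$ says exactly that the set $U_i:=\sigma_j^{-1}(\{w\in V(K_n):f(w)=i\})$ does not depend on $j$; since $C_n$ and $K_n$ have the same order, each $\sigma_j$ is a bijection, so $|U_i|=n_i$. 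Writing $E_{i,j}$ for the set of edges of $C_n$ with one endpoint in $U_i$ and the other in $U_j$, for $i\ne j$ each $\sigma_j$ sends $E_{i,j}$ into the $n_i n_j$ edges of $K_n$ between the two label classes, and for $j=1,\dots,k$ these images are pairwise disjoint; hence $k\,|E_{i,j}|\le n_i n_j$, and likewise $k\,|E_{i,i}|\le\binom{n_i}{2}$. These are the only consequences of the packing that I would use.

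\emph{Easy case $q^2\ge k$.} Here the conclusion is purely arithmetic. If $p\le 2q$, then $V(C_n)$ itself is a set of $n\ge 2k$ vertices carrying at most $2q$ labels. If $p>2q$, then any $2q$ of the classes together contain at least $2q\cdot q\ge 2k$ vertices (every class has at least $q$ of them) and involve exactly $2q$ labels.

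\emph{Main case $q^2<k$.} This is where the packing is genuinely used. Fix a label $i^\ast$ with $n_{i^\ast}=q$. Since $\binom{q}{2}\le q^2/2<k$, the inequality $k\,|E_{i^\ast,i^\ast}|\le\binom{q}{2}$ forces $E_{i^\ast,i^\ast}=\emptyset$, i.e. $U_{i^\ast}$ is an independent set of the cycle $C_n$. Consequently all $2q$ edge-ends incident to $U_{i^\ast}$ leave $U_{i^\ast}$, so $\sum_{j\ne i^\ast}|E_{i^\ast,j}|=2q$. Let $A=\{\,j\ne i^\ast:E_{i^\ast,j}\ne\emptyset\,\}$; the corresponding terms are positive integers summing to $2q$, so $|A|\le 2q$. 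For each $j\in A$ the inequality $k\,|E_{i^\ast,j}|\le n_{i^\ast}n_j=q\,n_j$ gives $n_j\ge\tfrac{k}{q}\,|E_{i^\ast,j}|$, and summing over $A$ yields $\sum_{j\in A}n_j\ge\tfrac{k}{q}\cdot 2q=2k$. Hence the union of the classes with a label in $A$ is a set of at least $2k$ vertices carrying at most $2q$ labels, as required.

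\emph{Main obstacle.} The only real content — and the one place where the hypothesis that $f$ comes from an honest packing (rather than an arbitrary labeling) is used — is the case $q^2<k$: one must observe that a smallest label class is automatically independent in $C_n$, and then turn its edge-boundary (of size exactly $2q$) into a lower bound of $2k$ on the combined size of its at most $2q$ neighbouring classes, via the single inequality $k\,|E_{i^\ast,j}|\le q\,n_j$. Everything else is routine counting; the one point to be careful about in the write-up is stating the two basic inequalities correctly, in particular handling same-label edges through $\binom{n_i}{2}$, since that is precisely what forces $E_{i^\ast,i^\ast}$ to be empty.
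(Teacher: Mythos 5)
Your proof is correct and rests on the same two packing inequalities as the paper's (namely $k\,|E_{i^\ast,j}|\le q\,n_j$ and $k\,|E_{i^\ast,i^\ast}|\le\binom{q}{2}$), applied to a smallest label class and the classes of its neighbours, so it is essentially the paper's argument. The only organisational difference is how edges inside the smallest class are dispatched: the paper keeps the case $E_{i^\ast,i^\ast}\neq\emptyset$ and compensates by adjoining the class itself to the set (obtaining at least $2k+1$ vertices with at most $2q-1$ labels), whereas you split off the packing-free case $q^2\ge k$ and observe that otherwise the same-label inequality forces the smallest class to be independent, which is an equally valid (and arguably cleaner) way to reduce to the boundary-edge count of $2q$.
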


\begin{proof}  Let $j$ be one of the labels associated to exactly $q$ vertices, and $V_j$ be the set of vertices of $G$ with label $j$. All the vertices of $V_j$ have two neighbors. Let $N_j$ be the set of neighbors, and $M_j=N_j-(N_j\cap V_j)$ be the set of neighbors that are not in $V_j$. Let $L$ be the number of different labels $l_1$, $l_2$, ..., $l_L$ associated to the vertices of $M_j$, $V(l_1)$, $V(l_2)$, ..., $V(l_L)$ be the sets of vertices of $G$ associated to those respective labels, and $U=V(l_1)\cup V(l_2) \cup ... \cup V(l_L)$. Let $e$ be the number of edges linking two vertices from $V_j$ in $G$, and, for all $1 \leq i \leq L$, $e_i$ be the number of edges linking a vertex from $V_j$ to a vertex from $V(l_i)$ in $G$. To have a valid packing, we must have, for every $i$ with $1 \leq i \leq L, ke_i\leq q|V(l_i)|$. Therefore, we have: 
$$\sum \limits_{i=1}^L e_i \leq \frac{q}{k}\sum \limits_{i=1}^L|V(l_i)|.$$
But $\sum \limits_{i=1}^L e_i=2q-2e$, and, since the $V(l_i)$ are pairwise disjoint, $\sum \limits_{i=1}^L|V(l_i)|=|U|$. Therefore, 
$$|U| \geq \frac{k}{q}(2q-2e).$$ 
If $e=0$, we obtain $|U| \geq 2k$. Moreover, at best, in $M_j$, we have $2q$ distinct vertices that all have distinct labels, so that $L\leq 2q$. By taking the set $U$, we obtain a set of at least $2k$ vertices in $G$ associated to at most $2q$ labels. If $e\geq 1$, to have a valid packing, we must have $ke\leq \frac{q(q-1)}{2}$, we thus have $|U| \geq 2k-(q-1)$. Plus, at best, in $M_j$, we have $2q-2$ distinct vertices that all have distinct labels, so that $L\leq 2q-2$. Therefore, by taking the set $U\cup V_j$, we obtain a set of at least $2k+1$ vertices in $G$ associated to at most $2q-1$ labels. Those two cases give the result.
\end{proof}

Lemma \ref{2klemma} led, in particular, to the following theorem, that gives a sufficient condition for the upper bound given by Conjecture \ref{THEconjecture} to hold.

\begin{theorem}  Let $G$ be the cycle $C_n$ of order $n=2k+x$, $k\geq 2$, $1\leq x \leq 2k-1$. Let $f$ be a $p$-labeling of $k$ copies of $G$. Let $q=\min \limits_{i\in \{1,...,p\}}|\{v\in V(C_n) : f(v)=i\}|$ be the minimum, over all $p$ labels, of the number of vertices that have this label. If $q=1$, meaning that $f$ has fixed points, or $q\leq \frac{x}{2}$, then $p \leq x+2$.
\label{qtheorem}
\end{theorem}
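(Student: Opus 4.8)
The plan is to derive this theorem as a short consequence of Lemma~\ref{2klemma} combined with an elementary count of the labels that do \emph{not} appear on the large set it produces. First I would invoke Lemma~\ref{2klemma} to obtain a set $S\subseteq V(C_n)$ with $|S|\geq 2k$ whose vertices carry at most $2q$ distinct labels. Write $A$ for the set of labels that occur on $S$, so $|A|\leq 2q$, and put $B=\{1,\dots,p\}\setminus A$. By the very definition of $A$, every vertex whose label lies in $B$ must lie in $V(C_n)\setminus S$. Since $|V(C_n)\setminus S|\leq n-2k=x$, and since by the definition of $q$ each of the $|B|$ labels of $B$ is used on at least $q$ vertices — with these vertex sets pairwise disjoint — we get $q\,|B|\leq x$, hence $|B|\leq \lfloor x/q\rfloor\leq x/q$.

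Summing, $p=|A|+|B|\leq 2q+x/q$. The remaining step is to check that $2q+x/q\leq x+2$ under each of the two hypotheses. Rearranging, this inequality is equivalent to $2(q-1)\leq x(q-1)/q$. If $q=1$ — the fixed-point case, since a label class of size $1$ forces that vertex to be fixed by every $\sigma_j$ — both sides vanish and we get $p\leq x+2$. If $q\geq 2$, dividing by $q-1>0$ reduces the inequality to $x\geq 2q$, i.e.\ exactly the hypothesis $q\leq x/2$; so again $p\leq x+2$.

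I do not anticipate a genuine obstacle here: once Lemma~\ref{2klemma} is in hand, the argument is a couple of lines. The only points requiring a little care are that $B$ is by construction confined to $V(C_n)\setminus S$ (so the bound $q\,|B|\leq x$ is legitimate) and that $|B|$ being an integer lets one pass from $x/q$ to $\lfloor x/q\rfloor$ harmlessly. All the substantive content — in particular the packing inequalities $ke_i\leq q\,|V(l_i)|$ and the case split on whether a minimum label class contains an edge — has already been absorbed into Lemma~\ref{2klemma}, which we are entitled to assume; so the work left for Theorem~\ref{qtheorem} itself is essentially just the inequality $2q+x/q\leq x+2$.
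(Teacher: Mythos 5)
Your proposal is correct and follows essentially the same route as the paper: apply Lemma~\ref{2klemma} to get at most $2q$ labels on a set of at least $2k$ vertices, bound the remaining labels by $\lfloor x/q\rfloor$ since each needs at least $q$ vertices in the complement, and then verify $2q+\frac{x}{q}\leq x+2$ by splitting on $q=1$ versus $q\geq 2$ (where it is equivalent to $q\leq \frac{x}{2}$). Your write-up just makes explicit the counting step that the paper states more tersely.
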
 

\begin{proof}  We know by Lemma \ref{2klemma} that there exists in $G$ a group of at least $2k$ vertices associated to at most $2q$ labels, and the rest of the labels are represented at least $q$ times. Therefore, we have:
$$p\leq 2q + \lfloor \frac{n-2k}{q} \rfloor \leq 2q + \frac{x}{q}.$$

\noindent But, for all $q\geq1$, $2q + \frac{x}{q} \leq x +2$.
\\Indeed, for q=1, we have $2q + \frac{x}{q}=x +2$.
\\For $q>1$, $2q + \frac{x}{q} \leq x+2 \leftrightarrow q\leq \frac{x}{2}$, which is true by hypothesis.
Therefore, we have $p\leq x+2$.
\end{proof}

The disadvantage of Theorem \ref{qtheorem} is that the conditions it gives are on a preexisting packing, while we want the upper bound to hold for all packings, with conditions on $k$ and $x$ only. Such conditions are thus given in the following theorem:

\begin{theorem}  Let $C_n$ be the cycle of order $n=2k+x$, $k\geq 2$, $1\leq x \leq 2k-1$. If $x\geq \sqrt{4k-2}$, then $\lambda^k(C_n) \leq x+2$.
\label{xtheorem}
\end{theorem}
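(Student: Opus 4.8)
\emph{Proof plan.} The plan is to combine Theorem~\ref{qtheorem} with a crude counting estimate, the dichotomy being governed by the quantity $q$ from that theorem. Fix an arbitrary $p$-labeled packing $f$ of $k$ copies of $C_n$ and set $q=\min_{i\in\{1,\dots,p\}}\lvert\{v\in V(C_n):f(v)=i\}\rvert$. It suffices to prove $p\le x+2$, since $f$ is arbitrary. First I would record that $x\ge\sqrt{4k-2}\ge\sqrt{6}>2$ and $x$ is an integer, so $x\ge 3$ for every $k\ge 2$; in particular $q=1$ implies $q\le x/2$, so the ``$q=1$'' clause of Theorem~\ref{qtheorem} is absorbed into the condition $q\le x/2$.

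If $q\le x/2$, then Theorem~\ref{qtheorem} applies directly and gives $p\le x+2$. So assume $q>x/2$ (hence $q\ge 2$). In this regime I would use only the trivial fact that each of the $p$ labels is carried by at least $q$ vertices, so $pq\le n=2k+x$. Since $q$ is an integer strictly larger than $x/2$, we have $q\ge(x+1)/2$, and therefore $p\le (2k+x)/q\le 2(2k+x)/(x+1)$. It remains to verify $2(2k+x)/(x+1)\le x+2$; clearing the positive denominator, this is $4k+2x\le(x+1)(x+2)=x^2+3x+2$, i.e.\ $4k\le x^2+x+2$. This is exactly where the hypothesis enters: from $x^2\ge 4k-2$ we get $x^2+x+2\ge(4k-2)+x+2=4k+x\ge 4k$, so $p\le x+2$. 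Combining the two cases, $\lambda^k(C_n)\le x+2$.

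I do not anticipate a deep obstacle: the argument is a two-way case split followed by elementary arithmetic. The only points needing a little care are (i) confirming the cases $q\le x/2$ and $q>x/2$ exhaust all possibilities and meet cleanly, which is automatic as they are complementary; and (ii) genuinely exploiting the integrality of $q$ in the second case — the jump from $q>x/2$ to $q\ge(x+1)/2$ is essential, since the naive bound $p\le(2k+x)/(x/2)$ would only yield the weaker threshold $x\ge 2\sqrt{k}$. Thus the constant $\sqrt{4k-2}$ is tuned precisely so that the ``small-$q$'' regime of Theorem~\ref{qtheorem} and the ``large-$q$'' counting bound overlap. (One could in principle tighten things further by invoking the refinement in Lemma~\ref{2klemma} in the intermediate range, but that is not needed for the stated bound.)
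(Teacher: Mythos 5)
Your argument is correct and is essentially the paper's proof: the paper likewise applies Theorem~\ref{qtheorem} (in contrapositive form, assuming a $(x+3)$-labeled packing exists) to get $q\ge\frac{x+1}{2}$, then uses the same counting bound $pq\le n=2k+x$ to reach a contradiction with $x\ge\sqrt{4k-2}$. Your direct two-case phrasing and your explicit use of the integrality of $q$ are only cosmetic differences from the paper's contradiction-style write-up.
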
 

\begin{proof}  Let us suppose that $\lambda^k(C_n) \geq x+3$. Then, there exists a $(x+3)$-labeling $f$ of $k$ copies of $G$. Let $q$ be the minimum, over all $p$ labels, of the number of vertices that have this label. Then, by Theorem \ref{qtheorem}, we have $q\geq \frac{x+1}{2}$. We have therefore at least $x+3$ labels, each represented by at least $\frac{x+1}{2}$ vertices, so that we have:
$$\frac{(x+1)(x+3)}{2}\leq 2k+x$$
Or $$x^2+2x+(3-4k)\leq 0.$$
For $x\geq \sqrt{4k-2}$, this last inequality is false and we obtain a contradiction.
\end{proof}

We now have a satisfying sufficient condition for the upper bound of $x+2$ to be valid, even more satisfying when taking into account the fact that the proportion of cases it covers grows with $n$. That being said, this bound is not the one given in Conjecture \ref{THEconjecture} for the particular case where $k$ is even and $x=1$. The following theorem thus gives a sufficient condition for this case, that depends on the considered packing:

\begin{theorem}  Let $C_n$ be the cycle of order $n=2k+1$, $k\geq 2$, k even. Let $f$ be a $p$-labeling of $k$ copies of $C_n$. Let $q=\min \limits_{i\in \{1,...,p\}}|\{v\in V(C_n) : f(v)=i\}|$ be the minimum, over all $p$ labels, of the number of vertices that have this label. If $q=1$, then $p \leq 2.$
\label{upper_x=1}
\end{theorem}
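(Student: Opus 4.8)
The plan is to exploit the identity $\binom{2k+1}{2}=k(2k+1)=k\cdot|E(C_n)|$: for $n=2k+1$ \emph{every} $k$-placement of $C_n$ is automatically a decomposition of $K_n$, i.e.\ the images $\sigma_1^*(E(C_n)),\dots,\sigma_k^*(E(C_n))$ partition $E(K_n)$. So fix a $p$-labeling $f$ of $k$ copies of $C_n$ with packing $\sigma=\{\sigma_1,\dots,\sigma_k\}$, and (as in Theorem~\ref{qtheorem}) use that $q=1$ means $f$ has a fixed point: after pulling the packing back by $\sigma_1^{-1}$ we may assume $\sigma_1=\mathrm{id}$, and then the vertex $v$ carrying the unique label satisfies $f(\sigma_j(v))=f(v)$ for all $j$, hence $\sigma_j(v)=v$. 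Let $a,b$ be the two neighbours of $v$ in $C_n$ and put $A=\{\sigma_j(a):1\le j\le k\}$ and $B=\{\sigma_j(b):1\le j\le k\}$.

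Next I would pin down the structure near $v$. Since $v$ is fixed, $\sigma_j^*$ sends $\{v,a\}$ to $\{v,\sigma_j(a)\}$ and $\{v,b\}$ to $\{v,\sigma_j(b)\}$; because the $k$ copies are edge-disjoint and together cover all of $E(K_n)$, these $2k$ edges at $v$ must be distinct and exhaust the $n-1=2k$ edges at $v$. Hence $A\cap B=\emptyset$, $|A|=|B|=k$, and $A\sqcup B=V(C_n)\setminus\{v\}$. Moreover each $\sigma_j$ preserves labels and $\sigma_1=\mathrm{id}$, so every vertex of $A$ has label $f(a)$ and every vertex of $B$ has label $f(b)$; in particular $\sigma_j(A)=A$ and $\sigma_j(B)=B$. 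If $f(a)=f(b)$ then all $2k$ vertices other than $v$ have one common label while $v$ has another, so $p=2$ and we are done. It therefore suffices to rule out $f(a)\neq f(b)$; in that case $A$ is exactly the label class of $f(a)$ and $B$ exactly that of $f(b)$.

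The heart of the argument, and the only place where $k$ being \emph{even} is used, is an edge count on the path $P=C_n-v$, which carries the $2k-1$ edges of $C_n$ not incident to $v$. Classify these edges by their endpoints as type $AA$, $BB$, or $AB$, with counts $e_{AA}+e_{BB}+e_{AB}=2k-1$. Applying the injections $\sigma_j$, which permute $A$ and $B$: over the $k$ copies the $AA$-edges give $k\,e_{AA}$ distinct edges inside the $k$-set $A$, so $k\,e_{AA}\le\binom{k}{2}$; likewise $k\,e_{BB}\le\binom{k}{2}$ and $k\,e_{AB}\le k^2$. Thus $e_{AA},e_{BB}\le\frac{k-1}{2}$ and $e_{AB}\le k$, and since $k$ is even an integer cannot reach the half-integer $\frac{k-1}{2}$, so in fact $e_{AA},e_{BB}\le\frac{k}{2}-1$. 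Then $2k-1=e_{AA}+e_{BB}+e_{AB}\le(\tfrac{k}{2}-1)+(\tfrac{k}{2}-1)+k=2k-2$, a contradiction. Hence $f(a)=f(b)$ and $p\le 2$. I expect the main obstacle to be making the ``forced decomposition and fixed point'' preliminaries fully rigorous; once $A$ and $B$ are available the count is immediate, and it is worth noting that for $k$ odd the same computation only gives $2k-1\le 2k-1$, consistent with the conjectured value $x+2=3$ being attainable there.
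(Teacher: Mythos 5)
Your proof is correct, and its heart is the same parity obstruction as the paper's: a label class of size $1$ forces a fixed vertex whose $2k$ incident edges, spread over the $k$ edge-disjoint copies, split $V(C_n)\setminus\{v\}$ into two label-constant sets of size $k$, and then $k$ even kills the count of same-label edges. The differences are in the scaffolding and the finish. The paper first invokes Theorem \ref{qtheorem} (to get $p\leq 3$) and Lemma \ref{2klemma} (to get the configuration of one fixed point plus $2k$ vertices on exactly two labels), and then exploits that the packing is a \emph{decomposition} of $K_n$ to obtain the exact equality $e_{aa}=\binom{k}{2}/k=\frac{k-1}{2}$, whose non-integrality for $k$ even is the contradiction. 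You instead derive the structure directly from the fixed point (distinctness of the $2k$ edges at $v$ plus counting already gives $A\sqcup B=V\setminus\{v\}$, $|A|=|B|=k$, without the global decomposition property), you dispose of the case $f(a)=f(b)$ immediately, so you never need to assume $p=3$, and you close with the inequalities $k\,e_{AA}\leq\binom{k}{2}$, $k\,e_{BB}\leq\binom{k}{2}$, $k\,e_{AB}\leq k^2$ together with integrality, summed against $e_{AA}+e_{BB}+e_{AB}=2k-1$. What each buys: the paper's route is shorter given its earlier lemmas; yours is self-contained, uses only edge-disjointness and local counting at $v$, and makes explicit the normalization $\sigma_1=\mathrm{id}$ (transporting the cycle through the first copy), a step the paper leaves implicit. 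Your closing remark that for $k$ odd the same count gives only $2k-1\leq 2k-1$ correctly explains why the bound $3$ can be attained there.
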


\begin{proof}  From Theorem \ref{qtheorem}, since $q=1$, we already have $\lambda^k(C_n)\leq 3$. Let us assume that $\lambda^k(C_n)=3$. Then, there exists a $3$-labeled packing of $k$ copies of $C_n$. By Lemma \ref{2klemma}, we know that there exists in this packing a set of $2k$ vertices associated to exactly $2$ labels, called $a$ and $b$, and one fixed point. 
\\ \indent We have $k(2k+1)=\frac{(2k+1)(2k+1-1)}{2}$, so that every edge between any two vertices is going to belong to exactly one of $k$ the copies of $G$. In particular, every edge between the fixed point and the other vertices is going to belong to a copy, so that the fixed point has to be linked to exactly one vertex with label $a$, and one vertex with label $b$. Therefore, there has to be $k$ vertices with label $a$ and $k$ vertices with label $b$. The number of edges linking two vertices of label $a$ in $G$ is therefore $\frac{k(k-1)}{2k}=\frac{k-1}{2}$, which means that $k$ is odd, and contradicts the hypothesis. Therefore, $\lambda^k(C_n)\neq 3$, and $\lambda^k(C_n)\leq 2$.
\end{proof}

For the cases where the previously seen conditions are not verified, the following theorem gives a necessary condition on $p$ for a $p$-labeled packing of $C_{2k+x}$ to exist and thus provides a different upper bound on $\lambda^k(C_{2k+x})$:

\begin{theorem} Let $C_n$ be the cycle of order $n=2k+x$. For $p \in \mathbb{N}^*$, if $p\leq \lambda^k(C_n)$,  there exists a partition $n_1,n_2,...,n_p$ of $n$ into $p$ parts such that $\sum \limits_{i=1}^{p} \lfloor \frac{n_i(n_i-1)}{2k} \rfloor + \sum \limits_{i=1}^{p} \sum \limits_{j=i+1}^{p} \lfloor \frac{n_in_j}{k} \rfloor \geq n$.
\label{max_nb_edges}
\end{theorem}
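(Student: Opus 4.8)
The plan is to read off, from any $p$-labeled packing of $k$ copies of $C_n$, the multiset of label-class sizes as the desired partition of $n$, and then to bound the number of edges of $C_n$ of each ``colour type'' by counting inside $K_n$. Assume $p\le\lambda^k(C_n)$; then there is a labeling $f\colon V(K_n)\to\{1,\dots,p\}$ together with bijections $\sigma_1,\dots,\sigma_k\colon V(C_n)\to V(K_n)$ satisfying the two conditions of the definition of a labeled packing (we may assume all $p$ labels actually occur, since otherwise we merge labels, which only weakens the constraints). Because $f(\sigma_1(v))=\dots=f(\sigma_k(v))$, every vertex $v$ of $C_n$ gets a well-defined label $\ell(v)$, and since each $\sigma_j$ is a label-preserving bijection, the number $n_i:=|\{v\in V(C_n):\ell(v)=i\}|$ of vertices of label $i$ in $C_n$ equals the number of vertices of label $i$ in $K_n$. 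Hence $n_1,\dots,n_p$ is a partition of $n$ into $p$ (positive) parts.

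The single observation that makes the count work is that label-preservation forces the colour type of an edge to be the same in every copy: if $e=\{u,v\}\in E(C_n)$ then $\sigma_j^*(e)$ joins a vertex of label $\ell(u)$ to one of label $\ell(v)$, for every $j$. So I would let $a_i$ be the number of edges of $C_n$ whose two endpoints both have label $i$, and $b_{ij}$ (for $i<j$) the number of edges whose endpoints have labels $i$ and $j$; these are intrinsic to $(C_n,f)$ and satisfy $\sum_{i}a_i+\sum_{i<j}b_{ij}=n$.

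Then I would invoke edge-disjointness of the copies. Fix a label $i$. For each $\ell\in\{1,\dots,k\}$, $\sigma_\ell$ maps the $a_i$ edges of $C_n$ that are monochromatic in colour $i$ into the set of $\binom{n_i}{2}$ edges of $K_n$ joining two label-$i$ vertices, and these $k$ images are pairwise disjoint; hence $k\,a_i\le\binom{n_i}{2}$, so $a_i\le\lfloor n_i(n_i-1)/(2k)\rfloor$. The same argument applied to the $n_in_j$ edges of $K_n$ between label-$i$ and label-$j$ vertices gives $b_{ij}\le\lfloor n_in_j/k\rfloor$. Summing over all classes, $n=\sum_i a_i+\sum_{i<j}b_{ij}\le\sum_{i=1}^{p}\lfloor n_i(n_i-1)/(2k)\rfloor+\sum_{i=1}^{p}\sum_{j=i+1}^{p}\lfloor n_in_j/k\rfloor$, which is exactly the asserted inequality.

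I do not anticipate a genuine obstacle: the proof is essentially a double-counting argument once one notices that each edge of $C_n$ has a fixed colour type across all $k$ copies. The only points needing a little care are justifying that the label-class sizes in $C_n$ and in $K_n$ coincide (bijectivity together with label-preservation) and, if one insists on a partition into exactly $p$ parts, arguing that one may assume every label is used.
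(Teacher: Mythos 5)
Your proof is correct and follows essentially the same route as the paper: read off the label-class sizes $n_1,\dots,n_p$ as the partition, split the $n$ edges of $C_n$ by the label pair of their endpoints, and use edge-disjointness of the $k$ copies to bound each count by $\lfloor n_i(n_i-1)/(2k)\rfloor$ or $\lfloor n_in_j/k\rfloor$ before summing. You merely make explicit a couple of points the paper leaves implicit (that the colour type of each edge is the same in every copy, and that the class sizes agree in $C_n$ and $K_n$).
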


\begin{proof} Since $p\leq \lambda^k(C_n)$, there exists a $p$-labeled packing $f$ of $k$ copies of $C_n$ into $K_n$. Let $1,2,...,p$ be the name of the labels of $f$, and $n_1,n_2,...,n_p$ be the number of vertices respectively associated to those labels. Obviously, $n_1,n_2,...,n_p$ is a partition of $n$ into $p$ parts. 

For all $i \in [1,p]$, $j \in [i+1,p]$, let $m_i$ be the number of edges of $C_n$ in $f$ linking two vertices of label $i$, and let $m_{i,j}$ be the number of edges of $C_n$ in $f$ linking a vertex of label $i$ and a vertex of label $j$. 

We have $\sum \limits_{i=1}^{p} m_i + \sum \limits_{i=1}^{p} \sum \limits_{j=i+1}^{p} m_{i,j} = n$. 

For $f$ to be a valid packing, we must have, for all $i \in [1,p]$, $m_i\leq \lfloor \frac{n_i(n_i-1)}{2k} \rfloor$, and, for all $j \in [i+1,p]$, $m_{i,j} \leq \sum \limits_{j=i+1}^{p} \lfloor \frac{n_in_j}{k} \rfloor$. 

Thus, $\sum \limits_{i=1}^{p} \lfloor \frac{n_i(n_i-1)}{2k} \rfloor + \sum \limits_{i=1}^{p} \sum \limits_{j=i+1}^{p} \lfloor \frac{n_in_j}{k} \rfloor \geq n$.
\end{proof}

\bigbreak

By regrouping the sufficient conditions of Theorems \ref{x2_keven}, \ref{qtheorem} and \ref{xtheorem}, we can already restrict the possible values of $k$ and $x$ for which the second inequality of Conjecture \ref{THEconjecture} might not hold. Table \ref{tab} lists those values and gives the upper bound given by Theorem \ref{max_nb_edges} for each of those, until $k=35$.

\bigbreak

\begin{table}[H]
\huge
\begin{center}
\scalebox{0.5}{
\begin{tabular}[scale=0.1]{|c|c|c|c|c|c|}
  \hline
  $k$ & $x$ & $n$ & maximal possible value of $\lambda^k(C_n)$ \\
  \hline
  $9$ & $3$ & $21$ & $x+4$ \\
  \hline
  $15$ & $6$ & $36$ & $x+3$ \\
  \hline
  $16$ & $4$ & $36$ & $x+5$ \\
  \hline
  $16$ & $5$ & $37$ & $x+3$ \\
  \hline
  $24$ & $7$ & $55$ & $x+4$ \\
  \hline
  $25$ & $5$ & $55$ & $x+6$ \\
  \hline
  $25$ & $6$ & $56$ & $x+4$ \\
  \hline
  $27$ & $3$ & $57$ & $x+4$ \\
  \hline
  $28$ & $4$ & $60$ & $x+3$ \\
  \hline
  $33$ & $4$ & $70$ & $x+3$ \\
  \hline
  $33$ & $5$ & $71$ & $x+3$ \\
  \hline
  $34$ & $10$ & $78$ & $x+3$ \\
  \hline
  $35$ & $4$ & $74$ & $x+3$ \\
  \hline
  $35$ & $8$ & $78$ & $x+5$ \\
  \hline
  $35$ & $9$ & $79$ & $x+3$ \\
  \hline
\end{tabular}}
\end{center}
\caption{Values of $k$ and $x$, until $k=35$, for which the second inequality of Conjecture \ref{THEconjecture} is unproven. The upper bound given by Theorem \ref{max_nb_edges} is also expressed, depending on $x$.
}
\label{tab}
\end{table}

\section{Labeled Packing of Circuits}

In this part, we study the directed version of the previous problem, that is the labeled packing of circuits, where the circuit $\overrightarrow{C_n}$ of order $n$ is the directed graph defined by $V(\overrightarrow{C_n})=\{v_i,0\leq i\leq n-1\}$ and $E(\overrightarrow{C_n})=\{(v_i,v_{i+1 \mod n}),0\leq i\leq n-1\}$.

\bigbreak

We define a packing of directed graphs $(H_1,H_2,...,H_k)$ in the directed graph $G$ the same way as a packing of graphs, the only difference being that for all $i$, $E(H_i)$ stands for the arcs of $H_i$, and $E(G)$ for the arcs of $G$, so that the induced images of the arcs of the $H_i$ must go into the arcs of $G$ without intersecting. We focus on the packings of $k$ copies of $H$ into the complete digraph $\overleftrightarrow{K_n}$, the digraph of order $n$ with all possible arcs. Given those modifications, the definitions of a labeled packing and of $\lambda^k(G)$ for $G$ a digraph are direct.

\bigbreak

The results we present are very similar to the ones presented for cycles, with some adaptations. We first have the following conditions for the $k$-placement to exist:

\begin{theorem} $\overrightarrow{C_{k+x}}$ admits a $k$-placement if and only if $x\geq 1$, and $(x,n)\not \in \{(1,4),(1,6)\}$.
\end{theorem}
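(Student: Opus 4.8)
\emph{Approach overview.} The plan is to identify a $k$-placement of the circuit $\overrightarrow{C_n}$, where $n=k+x$, with a family of $k$ pairwise arc-disjoint directed Hamilton cycles of the complete digraph $\overleftrightarrow{K_n}$ (each copy of $\overrightarrow{C_n}$ spans all $n$ vertices, hence is such a cycle), and then to read off the statement from the classical theory of Hamilton decompositions of $\overleftrightarrow{K_n}$.

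\emph{Necessity.} First, the $k$ copies contribute $kn$ pairwise distinct arcs inside the $n(n-1)$ arcs of $\overleftrightarrow{K_n}$, whence $kn\le n(n-1)$, i.e.\ $x=n-k\ge1$. Second, if $x=1$ then $k=n-1$ and a $k$-placement uses \emph{every} arc of $\overleftrightarrow{K_n}$: it is precisely a decomposition of $\overleftrightarrow{K_n}$ into $n-1$ directed Hamilton cycles. By Tillson's theorem such a Hamilton decomposition exists if and only if $n\notin\{4,6\}$, which forbids $(x,n)\in\{(1,4),(1,6)\}$. (For $n=4$ one can bypass the citation: the three would-be dicycles are pure $4$-cycles, each ``missing'' one of the three perfect matchings of $K_4$; tracking the two possible orientations of each cycle then imposes mutually contradictory constraints on their relative orientations, so no such decomposition exists. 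For $n=6$ I would simply cite Tillson.)

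\emph{Sufficiency.} Assume $x\ge1$ and $(x,n)\notin\{(1,4),(1,6)\}$; it suffices to exhibit $k$ arc-disjoint directed Hamilton cycles in $\overleftrightarrow{K_n}$, since these, with $\sigma_1$ taken to be the identity, constitute the required $k$-placement. If $x\ge2$, then $k\le n-2$ and no hard input is needed. When $n$ is odd, Walecki's decomposition of $K_n$ into $(n-1)/2$ Hamilton cycles, each oriented in both senses, yields $n-1$ arc-disjoint directed Hamilton cycles; when $n$ is even, the classical decomposition of $K_n$ into $(n-2)/2$ Hamilton cycles plus a perfect matching, with the cycles oriented in both senses and the matching discarded, yields $n-2$ arc-disjoint directed Hamilton cycles; in either case at least $k$ of them are available. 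If $x=1$, then $k=n-1$ and we need a full Hamilton decomposition of $\overleftrightarrow{K_n}$: for $n$ odd this is the two-orientations Walecki decomposition again (it covers each arc exactly once), and for $n$ even with $n\notin\{4,6\}$ it is Tillson's theorem, the order $n=2$ being trivial. This exhausts all admissible pairs $(x,n)$.

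\emph{Main obstacle.} The only substantial ingredient is Tillson's Hamilton-decomposition theorem, invoked for even $n\ge8$ (existence) and for $n\in\{4,6\}$ (non-existence); I would cite it rather than reprove it, so the genuine difficulty is external. Everything else is elementary bookkeeping: the counting bound, the parity split, and the fact that both orientations of an undirected Hamilton decomposition are arc-disjoint. The point that demands the most care is the handling of the two small even orders $4$ and $6$ — one must verify both that the forbidden $x=1$ instances really fail (the sole non-routine content of the ``only if'' direction) and that every $x\ge2$ instance is caught by the matching-plus-cycles decomposition, which is exactly why the argument above isolates the case $x\ge2$ from $x=1$.
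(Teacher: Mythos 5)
Your proposal is correct, and its skeleton (reduce a $k$-placement to $k$ arc-disjoint directed Hamilton cycles of $\overleftrightarrow{K_n}$, get $x\geq 1$ by counting arcs, and invoke Tillson's theorem for the $x=1$ case, in particular for the non-existence at $n\in\{4,6\}$) coincides with the paper's. Where you diverge is in producing the packings for $x\geq 2$: the paper observes that dropping copies from a packing of $k$ copies yields packings of fewer copies, so for $n\notin\{4,6\}$ everything follows from the full decomposition of $\overleftrightarrow{K_n}$, and it then disposes of the two exceptional orders by exhibiting explicit packings of $2$ copies of $\overrightarrow{C_4}$ and $4$ copies of $\overrightarrow{C_6}$ (given as figures). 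You instead build the required cycles directly: for odd $n$ you double Walecki's Hamilton decomposition of $K_n$ (giving $n-1$ arc-disjoint directed Hamilton cycles, which also settles $x=1$ for odd $n$ without any citation), and for even $n$ you orient both ways the $(n-2)/2$ Hamilton cycles of the classical cycles-plus-perfect-matching decomposition, giving $n-2$ directed cycles, which suffices whenever $x\geq 2$ and in particular covers $n=4$ and $n=6$ uniformly, so you only need Tillson for even $n\geq 8$ with $x=1$. Your route buys a uniform, figure-free treatment of the exceptional orders and localizes the external input more precisely; the paper's route is shorter on the page because it leans entirely on the cited decomposition theorem plus two small pictures. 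One caveat: your parenthetical ad hoc argument that $\overleftrightarrow{K_4}$ has no Hamilton decomposition ("mutually contradictory constraints on their relative orientations") is not actually a proof as written — it would need the case analysis spelled out — but since you fall back on citing Tillson for both $n=4$ and $n=6$, exactly as the paper cites the decomposition theorem, this does not affect the correctness of the argument.
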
 
\begin{proof} First, for a $k$-placement of $\overrightarrow{C_n}$ to exist, we must have $k|E(\overrightarrow{C_n})|\leq |E(\overleftrightarrow{K_n})|$, so that $n\geq k+1$. Plus, if $x=1$, the $k$-placement would actually be a decomposition of $\overleftrightarrow{K_n}$, and we know from Theorem 1.1 in \cite{alspach2003cycle} that such a decomposition exists if and only if $n\neq 4$, and $n\neq 6$. Now, since the existence of a packing of $k$ copies of $\overrightarrow{C_{k+x}}$ implies the existence of a packing of $k-1$ copies of $\overrightarrow{C_{(k-1)+(x+1)}}$, and since we have the following packings of $k=2$ copies of $\overrightarrow{C_4}$ and $k=4$ copies of $\overrightarrow{C_6}$, this gives the result:
 
 \vspace{1cm}
\begin{figure}[H]
	\centering
	\begin{tikzpicture}[scale=0.2]
    
    
    \filldraw (10,0) circle (5pt);
    \draw (10,0) node[right]{$v_0$};
	\filldraw (10,10) circle (5pt);
    \draw (10,10) node[above]{$v_1$};
    \filldraw (0,10) circle (5pt);
    \draw (0,10) node[above]{$v_2$};
    \filldraw (0,0) circle (5pt);
    \draw (0,0) node[left]{$v_3$};

    \draw[thick,-{Latex[length=3mm]}] (10,0) -- (10,10);
    \draw[thick,-{Latex[length=3mm]}] (10,10) -- (0,10);
    \draw[thick,-{Latex[length=3mm]}] (0,10) -- (0,0);
    \draw[thick,-{Latex[length=3mm]}] (0,0) -- (10,0);
    
    \filldraw (40,0) circle (5pt);
    \draw (40,0) node[right]{$v_0$};
	\filldraw (40,10) circle (5pt);
    \draw (40,10) node[above]{$v_1$};
    \filldraw (30,10) circle (5pt);
    \draw (30,10) node[above]{$v_2$};
    \filldraw (30,0) circle (5pt);
    \draw (30,0) node[left]{$v_3$};

    \draw[thick,-{Latex[length=3mm]}] (40,0) -- (30,0);
    \draw[thick,-{Latex[length=3mm]}] (30,0) -- (30,10);
    \draw[thick,-{Latex[length=3mm]}] (30,10) -- (40,10);
    \draw[thick,-{Latex[length=3mm]}] (40,10) -- (40,0);

	\end{tikzpicture}
    \caption{A packing of $k=2$ copies of $\vec{C_4}$.}
	\label{packing_2_6}
\end{figure}
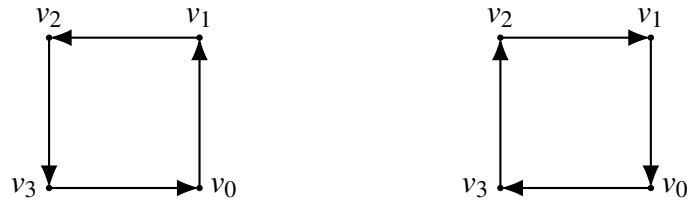

\vspace{1cm}
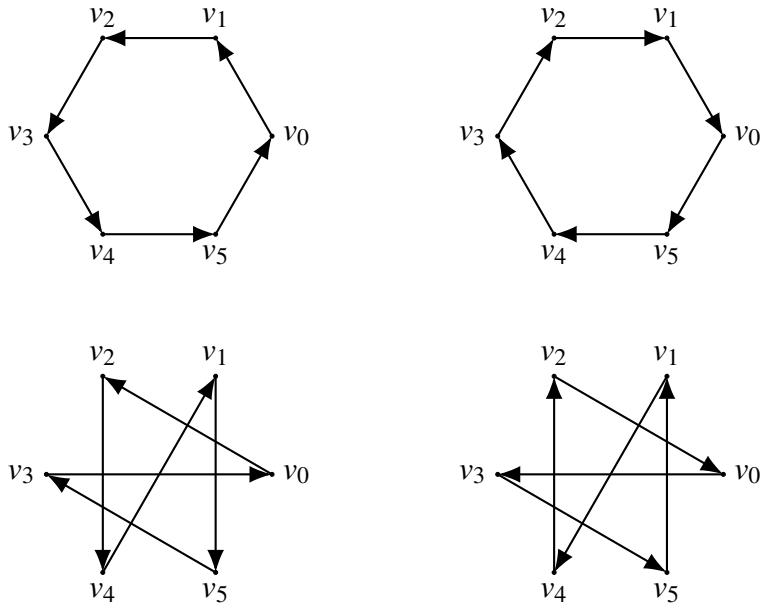
\begin{figure}[H]
	\centering
	\begin{tikzpicture}[scale=0.15]
    
    
    \filldraw (10,0) circle (5pt);
    \draw (10,0) node[right]{$v_0$};
	\filldraw (5,8.7) circle (5pt);
    \draw (5,8.7) node[above]{$v_1$};
    \filldraw (-5,8.7) circle (5pt);
    \draw (-5,8.7) node[above]{$v_2$};
    \filldraw (-10,0) circle (5pt);
    \draw (-10,0) node[left]{$v_3$};
    \filldraw (-5,-8.7) circle (5pt);
    \draw (-5,-8.7) node[below]{$v_4$};
    \filldraw (5,-8.7) circle (5pt);
    \draw (5,-8.7) node[below]{$v_5$};
    \draw[thick,-{Latex[length=3mm]}] (10,0) -- (5,8.7);
    \draw[thick,-{Latex[length=3mm]}] (5,8.7) -- (-5,8.7);
    \draw[thick,-{Latex[length=3mm]}] (-5,8.7) -- (-10,0);
    \draw[thick,-{Latex[length=3mm]}] (-10,0) -- (-5,-8.7);
    \draw[thick,-{Latex[length=3mm]}] (-5,-8.7) -- (5,-8.7);
    \draw[thick,-{Latex[length=3mm]}] (5,-8.7) -- (10,0);
    
    \filldraw (50,0) circle (5pt);
    \draw (50,0) node[right]{$v_0$};
	\filldraw (45,8.7) circle (5pt);
    \draw (45,8.7) node[above]{$v_1$};
    \filldraw (35,8.7) circle (5pt);
    \draw (35,8.7) node[above]{$v_2$};
    \filldraw (30,0) circle (5pt);
    \draw (30,0) node[left]{$v_3$};
    \filldraw (35,-8.7) circle (5pt);
    \draw (35,-8.7) node[below]{$v_4$};
    \filldraw (45,-8.7) circle (5pt);
    \draw (45,-8.7) node[below]{$v_5$};
    \draw[thick,-{Latex[length=3mm]}] (50,0) -- (45,-8.7);
    \draw[thick,-{Latex[length=3mm]}] (45,-8.7) -- (35,-8.7);
    \draw[thick,-{Latex[length=3mm]}] (35,-8.7) -- (30,0);
    \draw[thick,-{Latex[length=3mm]}] (30,0) -- (35,8.7);
    \draw[thick,-{Latex[length=3mm]}] (35,8.7) -- (45,8.7);
    \draw[thick,-{Latex[length=3mm]}] (45,8.7) -- (50,0);
    
   \filldraw (10,-30) circle (5pt);
    \draw (10,-30) node[right]{$v_0$};
	\filldraw (5,-21.3) circle (5pt);
    \draw (5,-21.3) node[above]{$v_1$};
    \filldraw (-5,-21.3) circle (5pt);
    \draw (-5,-21.3) node[above]{$v_2$};
    \filldraw (-10,-30) circle (5pt);
    \draw (-10,-30) node[left]{$v_3$};
    \filldraw (-5,-38.7) circle (5pt);
    \draw (-5,-38.7) node[below]{$v_4$};
    \filldraw (5,-38.7) circle (5pt);
    \draw (5,-38.7) node[below]{$v_5$};
    \draw[thick,-{Latex[length=3mm]}] (10,-30) -- (-5,-21.3);
    \draw[thick,-{Latex[length=3mm]}] (-5,-21.3) -- (-5,-38.7);
    \draw[thick,-{Latex[length=3mm]}] (-5,-38.7) -- (5,-21.3);
    \draw[thick,-{Latex[length=3mm]}] (5,-21.3) -- (5,-38.7);
    \draw[thick,-{Latex[length=3mm]}] (5,-38.7) -- (-10,-30);
    \draw[thick,-{Latex[length=3mm]}] (-10,-30) -- (10,-30);
    
    \filldraw (50,-30) circle (5pt);
    \draw (50,-30) node[right]{$v_0$};
	\filldraw (45,-21.3) circle (5pt);
    \draw (45,-21.3) node[above]{$v_1$};
    \filldraw (35,-21.3) circle (5pt);
    \draw (35,-21.3) node[above]{$v_2$};
    \filldraw (30,-30) circle (5pt);
    \draw (30,-30) node[left]{$v_3$};
    \filldraw (35,-38.7) circle (5pt);
    \draw (35,-38.7) node[below]{$v_4$};
    \filldraw (45,-38.7) circle (5pt);
    \draw (45,-38.7) node[below]{$v_5$};
    \draw[thick,-{Latex[length=3mm]}] (50,-30) -- (30,-30);
    \draw[thick,-{Latex[length=3mm]}] (30,-30) -- (45,-38.7);
    \draw[thick,-{Latex[length=3mm]}] (45,-38.7) -- (45,-21.3);
    \draw[thick,-{Latex[length=3mm]}] (45,-21.3) -- (35,-38.7);
    \draw[thick,-{Latex[length=3mm]}] (35,-38.7) -- (35,-21.3);
    \draw[thick,-{Latex[length=3mm]}] (35,-21.3) -- (50,-30);
    
	\end{tikzpicture}
    \caption{A packing of $k=4$ copies of $\vec{C_6}$.}
	\label{packing_4_6}
\end{figure}

\end{proof}

When those conditions are satisfied, we can study the value of $\lambda^k(\overrightarrow{C_n})$. We first have the following lemma, that is an extension of Duchêne, Kheddouci, Nowakowski and Tahraoui's Lemma 7 \cite{duchene2013labeled} to the case of circuits, following the same proof:

\begin{lemma} For every circuit $\overrightarrow{C_n}$ of order $n>k$, with $(x,n)\not \in \{(1,4),(1,6)\}$, we have $\lambda^k(\overrightarrow{C_n}) \leq \lfloor \frac{n}{2} \rfloor + \lfloor \frac{\lceil \frac{n}{2} \rceil}{k} \rfloor.$
\label{borne_sup_lemma}
\end{lemma}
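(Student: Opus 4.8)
The plan is to follow the proof of Lemma~7 of \cite{duchene2013labeled} for cycles, replacing its edge counting by arc counting. Suppose $\overrightarrow{C_n}$ admits a $p$-labeled packing of $k$ copies, given by injections $\sigma_1,\dots,\sigma_k\colon V(\overrightarrow{C_n})\to V(\overleftrightarrow{K_n})$ and a labeling $f$; the case $k=1$ is trivial (the bound is then $n$), so assume $k\geq 2$. Write $V(\overrightarrow{C_n})=\{v_0,\dots,v_{n-1}\}$ with arcs $v_i\to v_{i+1}$, call a vertex $v$ a \emph{fixed point} if $\sigma_j(v)=v$ for all $j$, and for a label $c$ let $n_c$ be the number of vertices carrying it.

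First I would establish the structural fact that powers the whole argument: if $v$ is a fixed point and $u$ is its in- or out-neighbour in $\overrightarrow{C_n}$, then $n_{f(u)}\geq k$. Indeed, if $u\to v$ is an arc then its image under $\sigma_j$ is the arc $\sigma_j(u)\to v$, and since these $k$ arcs are pairwise distinct (arc-disjointness of the packing) their tails $\sigma_1(u),\dots,\sigma_k(u)$ are $k$ distinct vertices, all of label $f(u)$ because $f$ is a labeled packing. The same reasoning shows no arc of $\overrightarrow{C_n}$ joins two fixed points (its image would repeat in every copy), so the fixed points form an independent set in the underlying cycle.

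Then I would classify the labels by class size: say there are $\ell$ classes of size $\geq k$, $m$ classes of size in $\{2,\dots,k-1\}$, and $s$ singleton classes, so $p=\ell+m+s$. A singleton class is a fixed point, so the $s$ singleton vertices form an independent set $B_1$ of $C_n$; by the structural fact every vertex of $N(B_1)$ lies in a class of size $\geq k$, and an elementary count on the cycle gives $|N(B_1)|\geq s+1$ whenever $1\leq s$ and $n>2s$. Writing $X$ for the number of vertices in classes of size $\geq k$ and noting $n=X+(\text{medium vertices})+s$ with at least $2m$ medium vertices, $\ell\leq X/k$, and $X\geq|N(B_1)|$, one gets
\[
p=\ell+m+s\ \leq\ \frac{X}{k}+\frac{n-X-s}{2}+s\ =\ \frac{n}{2}+\frac{s}{2}+X\Bigl(\tfrac1k-\tfrac12\Bigr).
\]
Since $k\geq 2$ the coefficient of $X$ is $\leq 0$, so replacing $X$ by its lower bound $|N(B_1)|\geq s+1$ and using $s\leq\lfloor n/2\rfloor$ collapses the right-hand side, after separating parities, to $\lfloor n/2\rfloor+\lfloor\lceil n/2\rceil/k\rfloor$; the degenerate cases $s=0$ (use only $X\geq 0$) and $n=2s$ (then there are no medium vertices and $X=s$ exactly) are handled directly.

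I expect the only delicate point to be this last collapse through the floor functions: the crude estimate $|N(B_1)|\geq s$ is not enough, and one needs the improvement to $s+1$ (available exactly when $n>2s$, in particular always when $n$ is odd) together with the integrality of $p$. Everything else is the verbatim directed analogue of the cycle proof, the only substantive differences being that ``the two edges at a vertex'' becomes ``the in-arc and out-arc'', and that the relevant existence threshold is $n>k$ rather than $n>2k$, which is why the circuit bound has the shape above.
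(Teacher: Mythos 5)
Your argument is correct, and it takes essentially the approach the paper intends: the paper gives no written proof of this lemma, stating only that it follows the proof of Lemma 7 of Duch\^ene et al.\ for cycles, and your write-up is exactly that argument transposed to arcs (singleton classes are fixed points forming an independent set, their in- and out-neighbours lie in classes of size at least $k$, then the class-size count combined with the neighbourhood bound and integrality of $p$). The delicate points you flag do check out: $|N(B_1)|\ge s+1$ holds whenever $s\ge 1$ and $n>2s$, and together with the degenerate cases $s=0$ and $n=2s$ the floor-function collapse gives $\lfloor n/2\rfloor+\lfloor\lceil n/2\rceil/k\rfloor$ in all parities, so there is no gap.
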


\noindent When $n\geq 2k$, the value of $\lambda^k(\overrightarrow{C_n})$ can be exactly found with the following theorem:

\begin{theorem}  For every circuit $\overrightarrow{C_n}$ of order $n=2km+x$, $k,m\geq 1$, $x<2k$, and $(x,n) \not \in \{(1,4),(1,6)\}$ we have $\lambda^k(\overrightarrow{C_n}) = \lfloor \frac{n}{2} \rfloor + \lfloor \frac{\lceil \frac{n}{2} \rceil}{k} \rfloor.$
\label{borne_sup_circuits}
\end{theorem}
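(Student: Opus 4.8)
The upper bound $\lambda^k(\overrightarrow{C_n})\le \lfloor n/2\rfloor+\lfloor \lceil n/2\rceil/k\rfloor$ is exactly Lemma~\ref{borne_sup_lemma}, so the real content is the matching lower bound, which I would prove by exhibiting an explicit packing built from powers of a single permutation. The plan is to choose one permutation $\rho$ of $V(\overrightarrow{C_n})$, set $\sigma_j=\rho^{\,j-1}$ for $1\le j\le k$, and take for $f$ any labeling that is constant on every orbit of $\rho$ and takes pairwise distinct values on distinct orbits (so in particular every fixed point of $\rho$ gets its own label). For such an $f$ and $\sigma$ the condition $f(\sigma_j(v))=f(\sigma_{j'}(v))$ holds automatically, both sides being the label of the $\rho$-orbit of $v$, and the number of labels used is exactly $(\text{number of fixed points of }\rho)+(\text{number of orbits of }\rho\text{ of size}\ge 2)$. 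So everything reduces to designing $\rho$ so that this quantity equals $\lfloor n/2\rfloor+\lfloor\lceil n/2\rceil/k\rfloor$ while the $k$ circuits $\rho^{\,j-1}(\overrightarrow{C_n})$ remain pairwise arc-disjoint.

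For the design, write $V(\overrightarrow{C_n})=\{v_0,\dots,v_{n-1}\}$ with arcs $v_i\to v_{i+1\bmod n}$, and fix a maximum independent set $S$ of the underlying cycle, of size $\lfloor n/2\rfloor$: the even-indexed vertices when $n$ is even, and $\{v_0,v_2,\dots,v_{n-3}\}$ when $n$ is odd. I would make every vertex of $S$ a fixed point of $\rho$. The complement $V\setminus S$ has $\lceil n/2\rceil$ vertices, and $\lceil n/2\rceil\ge k$ since $n=2km+x$ with $m\ge 1$ forces $n\ge 2k$; hence I can let $\rho$ act on $V\setminus S$ as $q:=\lfloor\lceil n/2\rceil/k\rfloor$ disjoint cycles each of length at least $k$ (for instance $q-1$ cycles of length $k$ and one of length $k+(\lceil n/2\rceil\bmod k)$), the precise arrangement being irrelevant. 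Then $\rho$ has $\lfloor n/2\rfloor$ fixed points and $q$ nontrivial orbits, so $p=\lfloor n/2\rfloor+q=\lfloor n/2\rfloor+\lfloor\lceil n/2\rceil/k\rfloor$, as required; when $k=1$ this degenerates to $\rho=\mathrm{id}$, $p=n$, which is consistent. I would also note that the two excluded pairs $(x,n)\in\{(1,4),(1,6)\}$ cannot actually occur here, since $x=1$ would force $2km\in\{3,5\}$; they are inherited only from Lemma~\ref{borne_sup_lemma}.

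The remaining task, and the one carrying the bookkeeping, is arc-disjointness. Since $S$ is independent, no arc of $\overrightarrow{C_n}$ joins two fixed points; when $n$ is even every arc in fact joins a fixed point to a non-fixed vertex, and when $n$ is odd there is exactly one ``defect'' arc $v_{n-2}\to v_{n-1}$ with both endpoints non-fixed. I would then argue in two steps. First, any arc $v_a\to v_{a+1}$ with a fixed endpoint produces $k$ pairwise distinct image-arcs over the $k$ copies: the fixed endpoint is common to all copies while the non-fixed endpoint runs over $k$ distinct vertices of a $\rho$-orbit of size $\ge k$; furthermore two such arcs with different fixed endpoints give disjoint images, and no image of such an arc can coincide with an image of the defect arc, which has no fixed endpoint. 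Second, the lone defect arc (for odd $n$) also gives $k$ distinct images, because its endpoints lie in orbit(s) of size $\ge k$. Hence all $kn$ image-arcs are distinct, $\sigma$ is a valid packing and $f$ a valid $p$-labeling of it, so $\lambda^k(\overrightarrow{C_n})\ge\lfloor n/2\rfloor+\lfloor\lceil n/2\rceil/k\rfloor$; together with Lemma~\ref{borne_sup_lemma} this yields the claimed equality.

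The hard part is not conceptual but organizational: getting the parity of the chosen independent set $S$ exactly right so that at most one arc remains inside $V\setminus S$, and in the odd case verifying that this single ``defect'' arc genuinely causes no collision — in particular no collision with the many arcs touching fixed points — and checking that $\lceil n/2\rceil$ vertices can really be split into exactly $\lfloor\lceil n/2\rceil/k\rfloor$ cycles of length $\ge k$, which needs $\lceil n/2\rceil\ge k$ and is the only place the hypothesis $n\ge 2k$ is used. The point worth stressing is that, in contrast with the undirected cycle case, the optimum for circuits is reached by a plain power of a single permutation, so no Walecki-type construction is needed.
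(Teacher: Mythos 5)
Your proposal is correct and follows essentially the same route as the paper: the upper bound is quoted from Lemma~\ref{borne_sup_lemma}, and the lower bound comes from the same construction, namely $\lfloor n/2\rfloor$ fixed points on an alternating (independent) set, each carrying its own label, with the remaining $\lceil n/2\rceil$ vertices split into $\lfloor\lceil n/2\rceil/k\rfloor$ cyclically rotated classes of size at least $k$, one label per class, the $\sigma_j$ being powers of that single rotation. Your treatment of arc-disjointness (in particular the single ``defect'' arc when $n$ is odd) is in fact spelled out more carefully than in the paper's own, terser, argument.
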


\begin{proof}
From Lemma \ref{borne_sup_lemma}, we get the upper bound.
\\For the lower bound, we give a construction of the corresponding labeled packing of $\overrightarrow{C_n}$. The idea is to put a fixed point one vertex out of two on the first copy of $\overrightarrow{C_n}$, and to have at least $k$ vertices of any other label so that they will never be sent to themselves and thus never create superpositions of arcs.

\bigbreak

Let $V(C_n)=\{v_0,...,v_n\}$ be the set of vertices of $\overrightarrow{C_n}$ and \\$E(C_n)=\{(v_i,v_{i+1\mod n}, 0\leq i\leq n-1\}$ be its set of arcs.

\bigbreak

We label the vertices with the labeling $f$:
\\$f(v_i)=\left\{
    \begin{array}{ll}
        v_{\frac{i}{2}+1} & \mbox{ if } 0\leq i \leq 2(\lfloor \frac{n}{2}\rfloor -1) \mbox{ and } i \mbox{ is even } \\
        v_{\lfloor \frac{i}{2k} \rfloor + \lfloor \frac{n}{2} \rfloor +1} & \mbox{ if } 1\leq i \leq 2k\lfloor \frac{\lceil \frac{n}{2} \rceil}{k}\rfloor -1 \mbox{ and } i \mbox{ is odd }\\
        v_{\lfloor \frac{i}{2k} \rfloor + \lfloor \frac{n}{2} \rfloor} & \mbox{ otherwise. }\\
    \end{array}
\right.
$

\bigbreak

For each label $l$, we rename the $L$ vertices that have label $l$, following the increasing order of their number, into $l_0,l_1,...,l_L$, and we associate to the labeling the set of permutations $\sigma=\{\sigma_j, 1\leq j\leq k\}$:
\\$
\left\{
    \begin{array}{ll}
        \sigma_j(l_i)=l_{i+1\mod L} & \mbox{ for all $l$ }\\
        \sigma_j(v_i)=v_{i} & \mbox{ if } i \mbox{ is even. }\\
    \end{array}
\right.$

\bigbreak

For any $j$, a vertex and its image by $\sigma_j$ obviously have the same label. Plus, since for all $i$ with $0\leq i\leq n-1$ and $i$ even, $v_i$ is a fixed points, and for all $i$ odd and, $j\neq j'$, $\sigma_j(v_i)\neq \sigma_{j'}(v_i)$, $\sigma$ is a packing. 

\bigbreak

An example of the construction is given in the following figure, for $k=3$ and $n=7$:
 
\vspace{1cm}
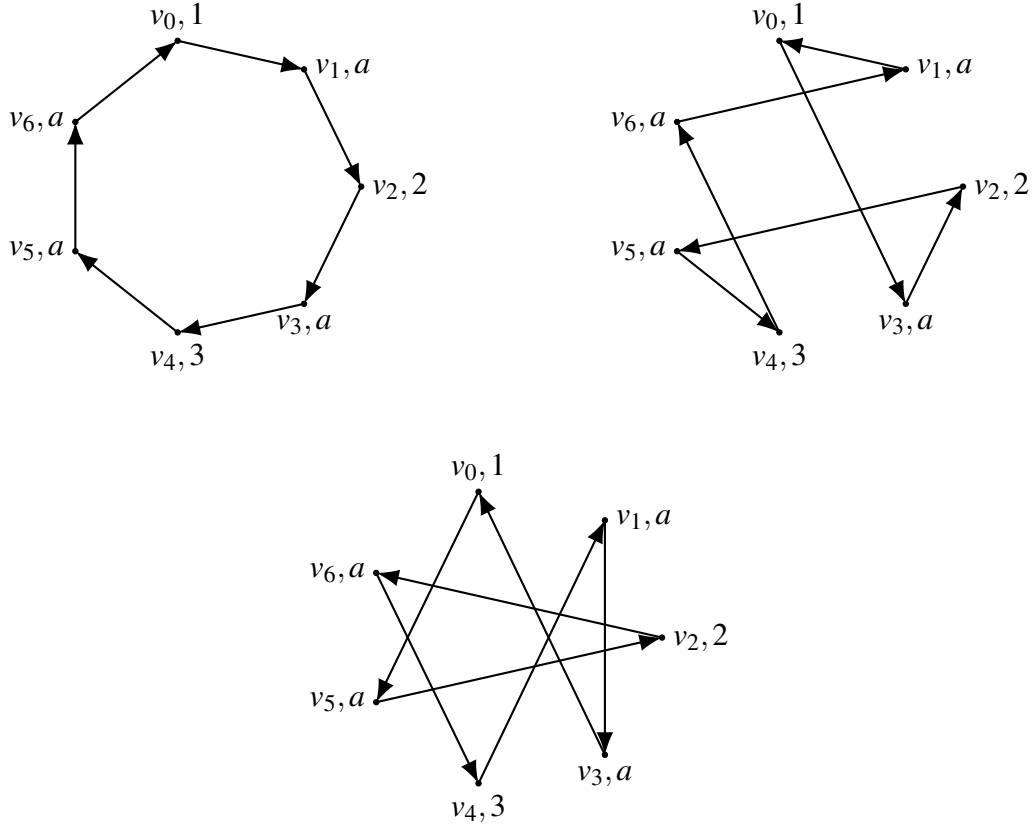
\begin{figure}[H]
	\centering
	\begin{tikzpicture}[scale=0.2]
    
    
    \filldraw (10,0) circle (5pt);
    \draw (10,0) node[right]{$v_2,2$};
    \filldraw (6.2,7.8) circle (5pt);
    \draw (6.2,7.8) node[right]{$v_1,a$};
    \filldraw (-2.2,9.7) circle (5pt);
    \draw (-2.2,9.7) node[above]{$v_0,1$};
    \filldraw (-9,4.3) circle (5pt);
    \draw (-9,4.3) node[left]{$v_6,a$};
    \filldraw (-9,-4.3) circle (5pt);
    \draw (-9,-4.3) node[left]{$v_5,a$};
    \filldraw (-2.2,-9.7) circle (5pt);
    \draw (-2.2,-9.7) node[below]{$v_4,3$};
    \filldraw (6.2,-7.8) circle (5pt);
    \draw (6.2,-7.8) node[below]{$v_3,a$};  
    \draw[thick,-{Latex[length=3mm]}] (37.8,9.7) -- (46.2,-7.8);
    \draw[thick,-{Latex[length=3mm]}] (46.2,-7.8) -- (50,0);
    \draw[thick,-{Latex[length=3mm]}] (50,0) -- (31,-4.3);
    \draw[thick,-{Latex[length=3mm]}] (31,-4.3) -- (37.8,-9.7);
    \draw[thick,-{Latex[length=3mm]}] (37.8,-9.7) -- (31,4.3);
    \draw[thick,-{Latex[length=3mm]}] (31,4.3) -- (46.2,7.8);
    \draw[thick,-{Latex[length=3mm]}] (46.2,7.8) -- (37.8,9.7);
    
    \filldraw (50,0) circle (5pt);
    \draw (50,0) node[right]{$v_2,2$};
    \filldraw (46.2,7.8) circle (5pt);
    \draw (46.2,7.8) node[right]{$v_1,a$};
    \filldraw (37.8,9.7) circle (5pt);
    \draw (37.8,9.7) node[above]{$v_0,1$};
    \filldraw (31,4.3) circle (5pt);
    \draw (31,4.3) node[left]{$v_6,a$};
    \filldraw (31,-4.3) circle (5pt);
    \draw (31,-4.3) node[left]{$v_5,a$};
    \filldraw (37.8,-9.7) circle (5pt);
    \draw (37.8,-9.7) node[below]{$v_4,3$};
    \filldraw (46.2,-7.8) circle (5pt);
    \draw (46.2,-7.8) node[below]{$v_3,a$};  
    \draw[thick,-{Latex[length=3mm]}] (-2.2,9.7) -- (6.2,7.8);
    \draw[thick,-{Latex[length=3mm]}] (6.2,7.8) -- (10,0);
    \draw[thick,-{Latex[length=3mm]}] (10,0) -- (6.2,-7.8);
    \draw[thick,-{Latex[length=3mm]}] (6.2,-7.8) -- (-2.2,-9.7);
    \draw[thick,-{Latex[length=3mm]}] (-2.2,-9.7) -- (-9,-4.3);
    \draw[thick,-{Latex[length=3mm]}] (-9,-4.3) -- (-9,4.3);
    \draw[thick,-{Latex[length=3mm]}] (-9,4.3) -- (-2.2,9.7);
    
   \filldraw (30,-30) circle (5pt);
    \draw (30,-30) node[right]{$v_2,2$};
    \filldraw (26.2,-22.2) circle (5pt);
    \draw (26.2,-22.2) node[right]{$v_1,a$};
    \filldraw (17.8,-20.3) circle (5pt);
    \draw (17.8,-20.3) node[above]{$v_0,1$};
    \filldraw (11,-25.7) circle (5pt);
    \draw (11,-25.7) node[left]{$v_6,a$};
    \filldraw (11,-34.3) circle (5pt);
    \draw (11,-34.3) node[left]{$v_5,a$};
    \filldraw (17.8,-39.7) circle (5pt);
    \draw (17.8,-39.7) node[below]{$v_4,3$};
    \filldraw (26.2,-37.8) circle (5pt);
    \draw (26.2,-37.8) node[below]{$v_3,a$};  
    \draw[thick,-{Latex[length=3mm]}] (17.8,-20.3) -- (11,-34.3);
    \draw[thick,-{Latex[length=3mm]}] (11,-34.3) -- (30,-30);
    \draw[thick,-{Latex[length=3mm]}] (30,-30) -- (11,-25.7);
    \draw[thick,-{Latex[length=3mm]}] (11,-25.7) -- (17.8,-39.7);
    \draw[thick,-{Latex[length=3mm]}] (17.8,-39.7) -- (26.2,-22.2);
    \draw[thick,-{Latex[length=3mm]}] (26.2,-22.2) -- (26.2,-37.8);
    \draw[thick,-{Latex[length=3mm]}] (26.2,-37.8) -- (17.8,-20.3);
    
	\end{tikzpicture}
    \caption{A $4$-labeled packing of $k=3$ copies of $\vec{C_7}$.}
	\label{k2_circuits}
\end{figure}
\end{proof}

The case where $k+1\leq n\leq 2k-1$ is more complicated. We present here two theorems for this case, one for $k$ even, the second for $k$ odd:

\begin{theorem} For every circuit $\overrightarrow{C_n}$ of order $n=k+x$, $k\geq 2$, $1\leq x \leq k-1$, $k$ even, $(x,n) \not \in \{(1,4),(1,6)\}$, we have $\lambda^k(\overrightarrow{C_n}) \geq x+1$.
\end{theorem}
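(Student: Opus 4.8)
I plan to exhibit an explicit $(x+1)$-labeled packing of $k$ copies of $\overrightarrow{C_n}$ into $\overleftrightarrow{K_n}$, in the spirit of the fixed-point construction used for Theorem~\ref{borne_sup_circuits}: a single ``mobile'' label $a$ carried by $k$ vertices that get cyclically shifted, together with $x$ fixed points $p_1,\dots,p_x$, each wearing its own private label. (Note first that when $k$ is even the hypothesis $(x,n)\notin\{(1,4),(1,6)\}$ is automatic, since both exceptions force $k=n-x$ odd.) I would identify the $k$ vertices of label $a$ with $\mathbb{Z}_k$, writing $a_i$ for the vertex indexed by $i$, set $\sigma_j(a_i)=a_{i+j-1\bmod k}$ and $\sigma_j(p_i)=p_i$ for $1\le j\le k$, and then choose the embedding of $\overrightarrow{C_n}$ so that these $k$ copies have pairwise disjoint arc sets. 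The labeling obviously satisfies the label condition, so the whole difficulty lies in the choice of the circuit.

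For the embedding I would use a single ``zigzag run'' of label-$a$ vertices, followed by the fixed points interleaved with the remaining label-$a$ vertices. Precisely, set $s_0=0$, $s_{2i-1}=i$, $s_{2i}=-i\bmod k$, and let $R=\{s_0,s_1,\dots,s_{k-x}\}\subseteq\mathbb{Z}_k$; since $x\ge 1$ these $k-x+1$ residues are distinct (the increasing and decreasing branches of the zigzag do not meet, as $\lceil(k-x)/2\rceil+\lfloor(k-x)/2\rfloor=k-x<k$, and neither hits $0$). Let $b_1,\dots,b_{x-1}$ enumerate the $x-1$ residues of $\mathbb{Z}_k\setminus R$, and take $\overrightarrow{C_n}$ to be the circuit
\[
a_{s_0}\to a_{s_1}\to\cdots\to a_{s_{k-x}}\to p_1\to a_{b_1}\to p_2\to a_{b_2}\to\cdots\to p_{x-1}\to a_{b_{x-1}}\to p_x\to a_{s_0}.
\]
This visits each of the $n$ vertices exactly once, hence is a copy of $\overrightarrow{C_n}$, and no two fixed points are consecutive.

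Next I would check that $\sigma=\{\sigma_1,\dots,\sigma_k\}$ is a packing. Each $p_i$ has in-degree and out-degree $1$, with both neighbours labeled $a$, so the $n$ arcs split into: the $k-x$ arcs $a_{s_{t-1}}\to a_{s_t}$ ($1\le t\le k-x$), both ends labeled $a$; and, for each $i$, one arc $a\to p_i$ and one arc $p_i\to a$. Since $\sigma_j$ preserves the label of each endpoint, arcs of different ``types'' have disjoint images; and since there is exactly one arc of type $a\to p_i$ (likewise $p_i\to a$) and the $k$ shifts move it to $k$ distinct arcs, those never collide across copies. The only substantive point is the $a$--$a$ arcs: the $\sigma_j$-orbit of $a_u\to a_v$ is exactly the set of the $k$ arcs of $\overleftrightarrow{K_k}$ of difference $v-u$, so two distinct $a$--$a$ arcs collide for some $j\neq j'$ iff they have the same difference in $\mathbb{Z}_k$, while a single one never collides with itself. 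The differences of the zigzag arcs are $s_t-s_{t-1}=(-1)^{t+1}t$ for $1\le t\le k-x$, and I would show these are pairwise distinct modulo $k$: if $t<t'$ have the same parity, the two values have the same sign and $0<|t-t'|<k$; if they have opposite parity a coincidence would force $k\mid t+t'$, hence $t+t'=k$ (as $0<t+t'<2k$), which is impossible because $k$ is even while $t+t'$ would then be odd. Hence all $k-x$ differences are distinct, $\sigma$ is a packing, and together with $f$ it is an $(x+1)$-labeled packing of $k$ copies of $\overrightarrow{C_n}$, so $\lambda^k(\overrightarrow{C_n})\ge x+1$.

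The main obstacle is exactly that last parity computation — showing the zigzag difference set $\{(-1)^{t+1}t : 1\le t\le k-x\}$ has no repetition modulo $k$ — and this is precisely where the hypothesis ``$k$ even'' is used: for $k$ odd the pair $t=\tfrac{k-1}{2}$, $t'=\tfrac{k+1}{2}$ already has $t+t'=k$, so it collides once $x$ is small enough to include both, which is why the odd case needs a separate, perturbed construction.
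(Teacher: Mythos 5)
Your construction is correct and essentially the same as the paper's: a Walecki-inspired zigzag through the $k$ vertices of the single mobile label $a$, rotated by the cyclic shifts $\sigma_j$, with the $x$ fixed points carrying private labels, and the packing verified through a parity argument that uses $k$ even exactly where the paper does. The only differences are cosmetic: the paper keeps the full zigzag and subdivides $x-1$ of its arcs with the extra fixed points (checking collisions by adding the two congruences and ruling out $j'=j+\tfrac{k}{2}$), while you truncate the zigzag and append an alternating fixed-point/$a$-vertex segment, phrasing the collision check via pairwise distinct arc differences modulo $k$.
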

\begin{proof}
We give the associated construction, inspired by Walecki's \cite{alspach2008wonderful}. Let $\{v_0,v_1,...,v_{n-1}\}$ be the vertices of $\overrightarrow{C_n}$. We label them with $f$:

$f(v_i)=\left\{
    \begin{array}{ll}
        i-k+1 & \mbox{ if } i\geq k\\
        a & \mbox{ otherwise. }\\
    \end{array}
\right.
$

\bigbreak

We associate to it the set of permutations $\sigma=\{\sigma_j, 1\leq j\leq k\}$:
\\$\sigma_j(v_i)=
\left\{
    \begin{array}{ll}
    	v_{i} & \mbox{ if } i\geq k\\
        v_{i+j-1\mod k} & \mbox{ otherwise. }\\
    \end{array}
\right.$

\bigbreak

We partition the set of vertices $V(\overrightarrow{C_n})$ into a set $F$ containing all the fixed points, and a set $V_a$  containing all the vertices of label $a$. We partition the set of arcs $E(\overrightarrow{C_n})$ into two sets $E_F$ and $E_{a,a}$ defined as follows:
\\$E_F=\{(v_{i-1},v_{k+i})\cup (v_{k+i},v_{k-i}), 1\leq i\leq x-1\}\cup \{(v_k,v_0),(v_{\frac{k}{2}},v_k)\}$
\\$E_{a,a}=\{(v_{k-i},v_{i}), 1\leq i\leq \frac{k}{2}-1\}\cup \{(v_i,v_{k-1-i}), x-1\leq i\leq \frac{k}{2}-1\}$.

\bigbreak

The obtained graph $G$ is isomorphic to $\overrightarrow{C_n}$. Indeed, for $x=1$, if $H=(V(H)=\{v_1,v_2,...,v_n\},E(H)=\{(v_i,v_{i+1 \mod n}),1\leq i\leq n\})$, we have $H\simeq \overrightarrow{C_n}$, and $G=\tau(H)$, with:

$ \tau(v_i)=
\left\{
    \begin{array}{ll}
        v_{\frac{i}{2}} & \mbox{ if } i<k, i \equiv 0\mod 2\\
        v_{k-\frac{i+1}{2}} & \mbox{ if } i<k, i \equiv 1\mod 2\\
        v_{i} & \mbox{ if } i=k.\\
    \end{array}
\right.$

For $x>1$, the extra fixed point are inserted in some of the edges of $\overrightarrow{C_{k+1}}$, so that $G \simeq \overrightarrow{C_n}$.

\bigbreak

For all $v\in F$, since $\sigma_j(v)=v$, we know that $f(\sigma_j(v))=f(v)=f(\sigma_{j'}(v))$. Since $i+j-1\mod k<k$, for any $v\not \in F$, $f(\sigma_j(v))=a=f(\sigma_{j'}(v))$, and $f$ is a valid labeled packing with respect to $\sigma$.

\bigbreak

Thus, in particular, for all $j,j', \sigma_j^*(E_F)\cap \sigma_{j'}^*(E_{a,a})=\emptyset$.

\bigbreak

For all $\{u,v\},\{u',v'\}\in E_{a,a}$ , $\sigma_j(\{u,v\})=\sigma_{j'}(\{u',v'\})$ implies $j=j'$.
\\Indeed, for $1\leq i\leq \frac{k}{2}-1$, if $\sigma_j^*((v_{k-i},v_{i}))=\sigma_{j'}^*((v_{k-i'},v_{i'}))$, then:

\bigbreak

$\left\{
    \begin{array}{ll}
        i+j-1=i'+j'-1\mod k\\
        k-i+j-1=k-i'+j'-1\mod k.\\
    \end{array}
\right.$

\bigbreak

By adding the two equations, since $k$ is even, and $1\leq j,j'\leq k$, we get $j=j'$, or $j'=j+\frac{k}{2}$. But injecting $j'=j+\frac{k}{2}$ into the first equation gives $i\equiv i'+\frac{k}{2}\mod k$, which is a contradiction, as $1\leq i'\leq \frac{k}{2}-1$. 

\bigbreak

Similarly, for $x-1\leq i,i'\leq k-1$, if $\sigma_j^*((v_i,v_{k-1-i}))=\sigma_{j'}^*((v_{i'},v_{k-1-i'}))$, $j=j'$. 

\bigbreak

For $1\leq i\leq k-1$ and $x-1\leq i'\leq k-1$ , $\sigma_j^*((v_{k-i},v_i))=\sigma_{j'}^*((v_{i'},v_{k-1-i'}))$ is impossible, as the sum of both equations gives an equality between an odd number and an even number.

\bigbreak

Thus, for all $j\neq j', \sigma_j^*(E_{a,a})\cap \sigma_{j'}^*(E_{a,a})=\emptyset$.

\bigbreak

To show that $\sigma_j^*(E_F)\cap \sigma_{j'}^*(E_F)=\emptyset$, since each edge of $E_F$ contains one fixed point, we only have to show that its ingoing neighbor $u$ and outgoing neighbor $v$ verify $\sigma_j(u)\neq \sigma_{j'}(u)$ and $\sigma_j(v)\neq \sigma_{j'}(v)$. But since $i+j-1\mod k=i+j'-1\mod k$ implies $j=j'$, for all $v\not \in F$ and $j\neq j'$, $\sigma_j(v) \neq \sigma_{j'}(v)$.

\bigbreak

An example of the construction is given in the following figure, for $k=4$ and $x=3$:
\vspace{1cm}
\begin{figure}[H]
	\centering
	\begin{tikzpicture}[scale=0.2]
    
    
    \filldraw (10,0) circle (5pt);
    \draw (10,0) node[right]{$v_2,a$};
    \filldraw (6.2,7.8) circle (5pt);
    \draw (6.2,7.8) node[right]{$v_1,a$};
    \filldraw (-2.2,9.7) circle (5pt);
    \draw (-2.2,9.7) node[above]{$v_0,a$};
    \filldraw (-9,4.3) circle (5pt);
    \draw (-9,4.3) node[left]{$v_6,3$};
    \filldraw (-9,-4.3) circle (5pt);
    \draw (-9,-4.3) node[left]{$v_5,2$};
    \filldraw (-2.2,-9.7) circle (5pt);
    \draw (-2.2,-9.7) node[below]{$v_4,1$};
    \filldraw (6.2,-7.8) circle (5pt);
    \draw (6.2,-7.8) node[below]{$v_3,a$};  
    \draw[thick,-{Latex[length=3mm]}] (-2.2,9.7) -- (-9,-4.3);
    \draw[thick,-{Latex[length=3mm]}] (-9,-4.3) -- (6.2,-7.8);
    \draw[thick,-{Latex[length=3mm]}] (6.2,-7.8) -- (6.2,7.8);
    \draw[thick,-{Latex[length=3mm]}] (6.2,7.8) -- (-9,4.3);
    \draw[thick,-{Latex[length=3mm]}] (-9,4.3) -- (10,0);
    \draw[thick,-{Latex[length=3mm]}] (10,0) -- (-2.2,-9.7);
    \draw[thick,-{Latex[length=3mm]}] (-2.2,-9.7) -- (-2.2,9.7);
    
    \filldraw (50,0) circle (5pt);
    \draw (50,0) node[right]{$v_2,a$};
    \filldraw (46.2,7.8) circle (5pt);
    \draw (46.2,7.8) node[right]{$v_1,a$};
    \filldraw (37.8,9.7) circle (5pt);
    \draw (37.8,9.7) node[above]{$v_0,a$};
    \filldraw (31,4.3) circle (5pt);
    \draw (31,4.3) node[left]{$v_6,3$};
    \filldraw (31,-4.3) circle (5pt);
    \draw (31,-4.3) node[left]{$v_5,2$};
    \filldraw (37.8,-9.7) circle (5pt);
    \draw (37.8,-9.7) node[below]{$v_4,1$};
    \filldraw (46.2,-7.8) circle (5pt);
    \draw (46.2,-7.8) node[below]{$v_3,a$};  
    \draw[thick,-{Latex[length=3mm]}] (46.2,7.8) -- (31,-4.3);
    \draw[thick,-{Latex[length=3mm]}] (31,-4.3) -- (37.8,9.7);
    \draw[thick,-{Latex[length=3mm]}] (37.8,9.7) -- (50,0);
    \draw[thick,-{Latex[length=3mm]}] (50,0) -- (31,4.3);
    \draw[thick,-{Latex[length=3mm]}] (31,4.3) -- (46.2,-7.8);
    \draw[thick,-{Latex[length=3mm]}] (46.2,-7.8) -- (37.8,-9.7);
    \draw[thick,-{Latex[length=3mm]}] (37.8,-9.7) -- (46.2,7.8);
    
   \filldraw (10,-30) circle (5pt);
    \draw (10,-30) node[right]{$v_2,a$};
    \filldraw (6.2,-22.2) circle (5pt);
    \draw (6.2,-22.2) node[right]{$v_1,a$};
    \filldraw (-2.2,-20.3) circle (5pt);
    \draw (-2.2,-20.3) node[above]{$v_0,a$};
    \filldraw (-9,-25.7) circle (5pt);
    \draw (-9,-25.7) node[left]{$v_6,3$};
    \filldraw (-9,-34.3) circle (5pt);
    \draw (-9,-34.3) node[left]{$v_5,2$};
    \filldraw (-2.2,-39.7) circle (5pt);
    \draw (-2.2,-39.7) node[below]{$v_4,1$};
    \filldraw (6.2,-37.8) circle (5pt);
    \draw (6.2,-37.8) node[below]{$v_3,a$};  
    \draw[thick,-{Latex[length=3mm]}] (10,-30) -- (-9,-34.3);
    \draw[thick,-{Latex[length=3mm]}] (-9,-34.3) -- (6.2,-22.2);
    \draw[thick,-{Latex[length=3mm]}] (6.2,-22.2) -- (6.2,-37.8);
    \draw[thick,-{Latex[length=3mm]}] (6.2,-37.8) -- (-9,-25.7);
    \draw[thick,-{Latex[length=3mm]}] (-9,-25.7) -- (-2.2,-20.3);
    \draw[thick,-{Latex[length=3mm]}] (-2.2,-20.3) -- (-2.2,-39.7);
    \draw[thick,-{Latex[length=3mm]}] (-2.2,-39.7) -- (10,-30);
    
    \filldraw (50,-30) circle (5pt);
    \draw (50,-30) node[right]{$v_2,a$};
    \filldraw (46.2,-22.2) circle (5pt);
    \draw (46.2,-22.2) node[right]{$v_1,a$};
    \filldraw (37.8,-20.3) circle (5pt);
    \draw (37.8,-20.3) node[above]{$v_0,a$};
    \filldraw (31,-25.7) circle (5pt);
    \draw (31,-25.7) node[left]{$v_6,3$};
    \filldraw (31,-34.3) circle (5pt);
    \draw (31,-34.3) node[left]{$v_5,2$};
    \filldraw (37.8,-39.7) circle (5pt);
    \draw (37.8,-39.7) node[below]{$v_4,1$};
    \filldraw (46.2,-37.8) circle (5pt);
    \draw (46.2,-37.8) node[right]{$v_3,a$}; 
    \draw[thick,-{Latex[length=3mm]}] (46.2,-37.8) -- (31,-34.3);
    \draw[thick,-{Latex[length=3mm]}] (31,-34.3) -- (50,-30);
    \draw[thick,-{Latex[length=3mm]}] (50,-30) -- (37.8,-20.3);
    \draw[thick,-{Latex[length=3mm]}] (37.8,-20.3) -- (31,-25.7);
    \draw[thick,-{Latex[length=3mm]}] (31,-25.7) -- (46.2,-22.2);
    \draw[thick,-{Latex[length=3mm]}] (46.2,-22.2) -- (37.8,-39.7);
    \draw[thick,-{Latex[length=3mm]}] (37.8,-39.7) -- (46.2,-37.8);
    
	\end{tikzpicture}
    \caption{A $4$-labeled packing of $k=4$ copies of $\vec{C_7}$.}
	\label{k2_circuits_2}
\end{figure}
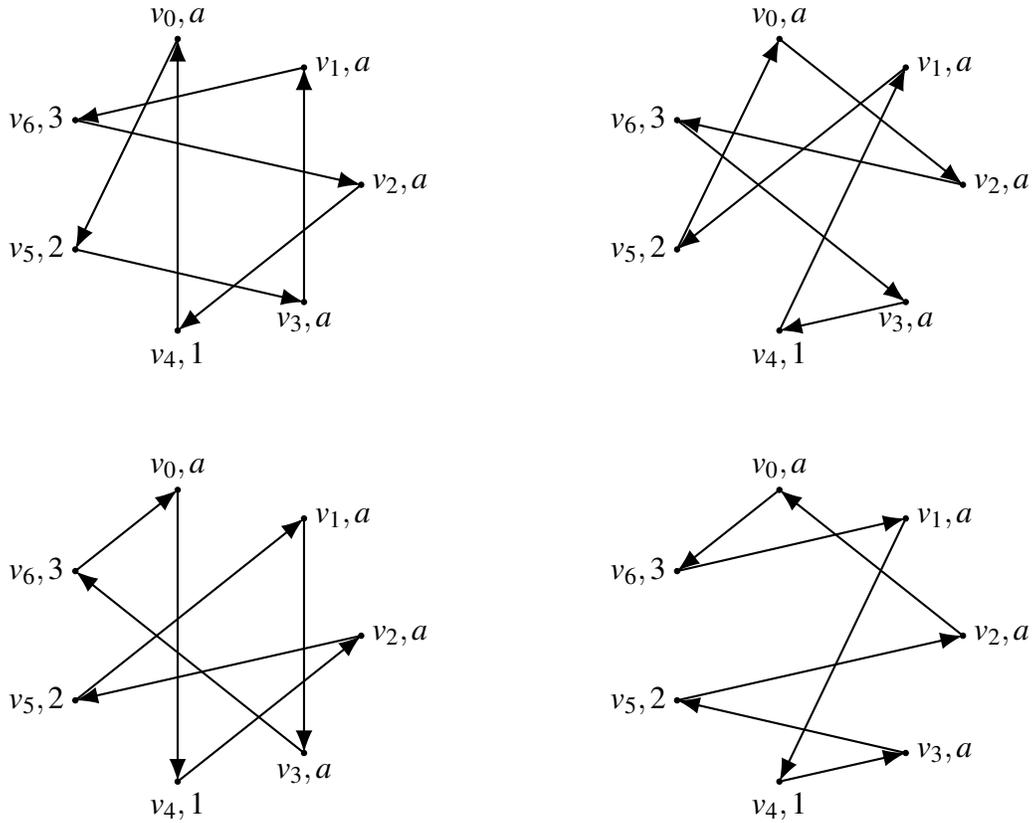
This concludes the proof of the theorem.
\end{proof}

The following result, for the case where $k$ is odd, can be deduced from the previous one:

\begin{theorem} For every circuit $\overrightarrow{C_n}$ of order $n=k+x$, $k\geq 2$, $2\leq x \leq k-1$, $k$ odd, $(x,n) \not \in \{(1,4),(1,6)\}$, we have $\lambda^k(\overrightarrow{C_n}) \geq x$.
\end{theorem}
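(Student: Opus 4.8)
The plan is to deduce the statement from the previous theorem (the case $k$ even) by a parity shift, combined with the trivial monotonicity of $\lambda^k$ in $k$. No genuinely new construction is needed.

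First I would observe that since $k$ is odd, $k+1$ is even, and that the circuit in question can be rewritten as $\overrightarrow{C_n}$ with $n = k+x = (k+1) + (x-1)$. Setting $x' := x-1$, the hypothesis $2 \leq x \leq k-1$ becomes $1 \leq x' \leq k-2 < (k+1)-1$, so $\overrightarrow{C_n}$ is precisely a circuit of the form $\overrightarrow{C_{(k+1)+x'}}$ lying in the range covered by the previous theorem for the even value $k+1$. I would then check that the two excluded pairs cause no problem: $(x', n) = (1,4)$ would force $x=2$ and $k=2$, while $(x', n) = (1,6)$ would force $x=2$ and $k=4$, and both contradict the oddness of $k$; likewise the pair $(x,n)$ in the present statement is never $(1,4)$ or $(1,6)$ since $x \geq 2$. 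Hence the previous theorem applies and gives $\lambda^{k+1}(\overrightarrow{C_n}) \geq x'+1 = x$.

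Second, I would record the elementary fact that $\lambda^{k+1}(G) \leq \lambda^k(G)$ for every digraph $G$: a $p$-labeled packing of $k+1$ copies of $G$ consists of a $p$-labeling $f$ together with injections $\sigma_1,\dots,\sigma_{k+1}$ whose arc-images are pairwise disjoint and which satisfy $f(\sigma_1(v)) = \dots = f(\sigma_{k+1}(v))$ for all $v$; discarding $\sigma_{k+1}$ leaves a $p$-labeled packing of $k$ copies of $G$ with the same labeling. For completeness I would also note that $\overrightarrow{C_n}$ admits a $k$-placement here by the earlier existence theorem, since $x\geq 1$ and $(x,n)$ is not one of the two exceptions. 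Combining the two steps yields $\lambda^k(\overrightarrow{C_n}) \geq \lambda^{k+1}(\overrightarrow{C_n}) \geq x$, which is the claim.

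There is essentially no obstacle in this argument; the only care needed is the parameter bookkeeping — verifying that $x-1$ really lands in the interval $[1,(k+1)-1]$ and that the forbidden pairs $\{(1,4),(1,6)\}$ are automatically avoided because $k$ is odd — and stating explicitly the monotonicity $\lambda^{k+1}\leq \lambda^k$, which the earlier parts of the paper use only implicitly. (If a self-contained construction were preferred, one could instead imitate the even construction of the previous theorem directly, replacing its $k$-element colour class $V_a$ by a cyclically rotated $(k+1)$-element class and using only $x-1$ fixed points; the deduction above is shorter and is presumably what is meant by ``can be deduced from the previous one''.)
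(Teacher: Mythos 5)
Your proof is correct and follows essentially the same route as the paper, which writes $n=(k+1)+(x-1)$ with $k+1$ even and invokes the preceding even-$k$ theorem to get an $x$-labeled packing of $k+1$ copies of $\overrightarrow{C_n}$. You merely make explicit two points the paper leaves implicit — that the excluded pairs $(1,4),(1,6)$ cannot arise because $k$ is odd, and the monotonicity $\lambda^k \geq \lambda^{k+1}$ obtained by discarding one copy — both of which are sound.
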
 
\begin{proof}
We have $n=(k+1)+(x-1)$ with $k+1$ even, and $x-1\geq 1$. Thus, by Theorem\ref{borne_sup_circuits}, there exists a $x$-labeled packing of $k+1$ copies of $\overrightarrow{C_n}$.
\end{proof}

\noindent The next results are interested in finding an upper bound. The proofs are not given as they are similar to the ones for cycles.

\bigbreak

The first result is the analogue of Lemma \ref{2klemma} for the case of circuits. The proof is identical, with the exception that we only count the outgoing neighbors and outgoing arcs of the vertices that have one of the labels that are represented by $q$ vertices. 

\begin{lemma}  Let $\overrightarrow{C_n}$ be the circuit of order $n=k+x$, $k\geq 2$, $1\leq x \leq k-1$, $(x,n) \not \in \{(1,4),(1,6)\}$. Let $f$ be a $p$-labeling of $k$ copies of $\overrightarrow{C_n}$. Let $q=\min \limits_{i\in \{1,...,p\}}|\{v\in V(C_n) \mbox{ such that } f(v)=i\}|$ be the minimum, over all $p$ labels, of the number of vertices that have this label. Then, there exists in $\overrightarrow{C_n}$ a set of at least $k$ vertices that are associated, together, to at most $q$ labels.
\label{2klemma_circuits}
\end{lemma}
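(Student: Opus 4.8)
The plan is to follow the proof of Lemma \ref{2klemma} almost verbatim, the only change being that the undirected neighbourhood is replaced by the out-neighbourhood. First I would fix a label $j$ that is represented by exactly $q$ vertices, let $V_j$ be the set of those $q$ vertices, and consider only the arcs leaving $V_j$. Since every vertex of $\overrightarrow{C_n}$ has out-degree exactly $1$, the $q$ vertices of $V_j$ emit exactly $q$ arcs: say $e$ of them land inside $V_j$ and the remaining $q-e$ land in a set $M_j$ of out-neighbours disjoint from $V_j$. I would then let $l_1,\dots,l_L$ be the distinct labels occurring on $M_j$, let $V(l_i)$ be the set of vertices carrying label $l_i$, set $U=\bigcup_{i=1}^L V(l_i)$, and let $e_i$ be the number of arcs of $\overrightarrow{C_n}$ going from a vertex of $V_j$ to a vertex of $V(l_i)$, so that $\sum_{i=1}^L e_i = q-e$.

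Next I would record the packing constraints in the directed setting. An arc of $\overrightarrow{C_n}$ joining a vertex of label $i$ to a vertex of label $i'$ is copied $k$ times, and each copy must occupy a distinct arc of $\overleftrightarrow{K_n}$ from a label-$i$ vertex to a label-$i'$ vertex; there are $n_i n_{i'}$ such arcs when $i\neq i'$ and $n_i(n_i-1)$ of them when $i=i'$. Applied with $i=j$ this yields $k e_i \le q\,|V(l_i)|$ for every $i$, and $k e \le q(q-1)$. Summing the first family of inequalities gives $q-e=\sum_{i} e_i \le \tfrac{q}{k}\,|U|$, that is, $|U|\ge k(q-e)/q = k - ke/q$.

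Then I would split into the same two cases as in Lemma \ref{2klemma}. If $e=0$, then $|U|\ge k$; moreover $M_j$ has at most $q$ vertices, so it carries at most $q$ labels, hence $L\le q$, and the set $U$ is itself a set of at least $k$ vertices spanning at most $q$ labels. If $e\ge 1$, then $ke\le q(q-1)$ gives $ke/q\le q-1$, so $|U|\ge k-(q-1)$; and now $M_j$ contains at most $q-1$ vertices, so $L\le q-1$, whence $U\cup V_j$ is a set of at least $k+1$ vertices spanning at most $q$ labels. In either case we obtain a set of at least $k$ vertices associated, together, to at most $q$ labels, which is the claim.

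There is essentially no new difficulty here, as the lemma header already announces. The only point requiring care is the out-degree bookkeeping: $V_j$ emits exactly $q$ arcs rather than being incident to $2q$ edges, which is precisely why the bound $2q$ of Lemma \ref{2klemma} becomes $q$ and the threshold $2k$ becomes $k$. I would also note that the excluded orders $(x,n)\in\{(1,4),(1,6)\}$ enter only to guarantee that a $k$-placement, and hence the $p$-labeling $f$, exists at all, and play no further role in the counting argument.
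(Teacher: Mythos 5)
Your proposal is correct and matches the paper's intended argument: the paper gives no separate proof, stating only that the proof of Lemma \ref{2klemma} applies verbatim once one counts only the outgoing neighbours and outgoing arcs of the minimally represented label class, which is exactly the bookkeeping you carry out (out-degree $1$ gives $q$ arcs instead of $2q$ edge-endpoints, $ke\le q(q-1)$ and $ke_i\le q|V(l_i)|$ in the directed complete graph, and the same two cases $e=0$ and $e\ge 1$). No gaps.
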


This lemma leads to the two following theorems:

\begin{theorem}  Let $\overrightarrow{C_n}$ be the circuit of order $n=k+x$, $k\geq 2$, $1\leq x \leq k-1$, $(x,n) \not \in \{(1,4),(1,6)\}$. Let $f$ be a $p$-labeling of $k$ copies of $\overrightarrow{C_n}$. Let $q=\min \limits_{i\in \{1,...,p\}}|\{v\in V(\overrightarrow{C_n}) : f(v)=i\}|$ be the minimum, over all $p$ labels, of the number of vertices that have this label. If $q\leq x$, then $p \leq x+2$.
\label{qtheorem_circuits}
\end{theorem}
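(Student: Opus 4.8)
The plan is to follow the argument of Theorem~\ref{qtheorem} almost verbatim, replacing the use of Lemma~\ref{2klemma} by its directed counterpart Lemma~\ref{2klemma_circuits}. So I would start from a $p$-labeling $f$ of $k$ copies of $\overrightarrow{C_n}$ with $q\leq x$, and aim to bound $p$.

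First I would invoke Lemma~\ref{2klemma_circuits}: it produces a set $S\subseteq V(\overrightarrow{C_n})$ of at least $k$ vertices whose labels span at most $q$ distinct values; call $A$ this set of labels, so $|A|\leq q$. Every label not in $A$ then occurs only among the vertices of $V(\overrightarrow{C_n})\setminus S$, a set of at most $n-k=x$ vertices. Since by definition of $q$ each label is carried by at least $q$ vertices, the number of labels outside $A$ is at most $\lfloor x/q\rfloor$. Hence
\[
p \;=\; |A| + \bigl|\{\text{labels not in }A\}\bigr| \;\leq\; q + \left\lfloor \frac{x}{q}\right\rfloor \;\leq\; q + \frac{x}{q}.
\]

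It then remains to check that $q+\frac{x}{q}\leq x+2$ for every integer $q$ with $1\leq q\leq x$. The function $g(t)=t+x/t$ is convex on $(0,\infty)$, so on $[1,x]$ it attains its maximum at an endpoint; since $g(1)=x+1$ and $g(x)=x+1$, this gives $g(q)\leq x+1\leq x+2$, which finishes the proof (and in fact yields the slightly stronger $p\leq x+1$, the gap with the cycle case coming from the special set here having size $k$ rather than $2k$).

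The argument is essentially mechanical once Lemma~\ref{2klemma_circuits} is available, so I do not expect a serious obstacle; the only point deserving a word of care is the observation that a label absent from $S$ has \emph{all} of its occurrences in the complement of $S$, which is exactly what permits bounding the number of such labels by $\lfloor x/q\rfloor$. I would also flag that the hypothesis $q\leq x$ is precisely what makes the final numerical inequality valid — for $q>x$ one only gets $p\leq q$, which need not be at most $x+2$.
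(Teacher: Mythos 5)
Your proof is correct and follows exactly the route the paper intends: the paper omits this proof, stating it mirrors the cycle case (Theorem~\ref{qtheorem}), and your argument is precisely that adaptation, using Lemma~\ref{2klemma_circuits} to get $p\leq q+\lfloor x/q\rfloor$ and then the numerical bound for $1\leq q\leq x$. Your additional observation that the convexity estimate actually gives the slightly stronger $p\leq x+1$ is a valid minor improvement, not a discrepancy.
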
 

As it was the case for cycles, the previous theorem gives conditions on the considered packing, while we want conditions on $k$ and $x$ only. Thus, we give the following result:

\begin{theorem}  Let $\overrightarrow{C_n}$ be the circuit of order $n=k+x$, $k\geq 2$, $1\leq x \leq k-1$, $(x,n) \not \in \{(1,4),(1,6)\}$. If $x\geq \sqrt{k-1}$, then $\lambda^k(\overrightarrow{C_n}) \leq x+2$.
\label{xtheorem_circuits}
\end{theorem}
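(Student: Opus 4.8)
The plan is to mirror the proof of Theorem~\ref{xtheorem} for cycles, replacing the undirected ingredients by their circuit analogues (Lemma~\ref{2klemma_circuits} and Theorem~\ref{qtheorem_circuits}). First I would argue by contradiction: assume $\lambda^k(\overrightarrow{C_n}) \geq x+3$, so that $\overrightarrow{C_n}$ admits an $(x+3)$-labeling $f$ of $k$ copies. Let $q=\min_{i\in\{1,\dots,x+3\}}|\{v\in V(\overrightarrow{C_n}) : f(v)=i\}|$ be the minimum label multiplicity. By Theorem~\ref{qtheorem_circuits}, if $q\leq x$ then $p\leq x+2$, contradicting $p=x+3$; hence $q\geq x+1$.

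Next I would carry out the counting step. Since each of the $x+3$ labels is carried by at least $q\geq x+1$ vertices, and these vertex sets are pairwise disjoint subsets of $V(\overrightarrow{C_n})$, we obtain $(x+1)(x+3)\leq n = k+x$, i.e.\ $x^2+4x+3\leq k+x$, which simplifies to $x^2+3x+3\leq k$. Finally I would invoke the hypothesis $x\geq\sqrt{k-1}$, equivalently $x^2\geq k-1$: this yields $x^2+3x+3 \geq (k-1)+3x+3 = k+3x+2 > k$ since $x\geq 1$, which contradicts $x^2+3x+3\leq k$. Therefore $\lambda^k(\overrightarrow{C_n})\leq x+2$.

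The argument is essentially routine once Theorem~\ref{qtheorem_circuits} is available, so there is no real obstacle beyond bookkeeping; the one point deserving care is why the threshold here is $\sqrt{k-1}$ rather than the $\sqrt{4k-2}$ of the undirected case. The difference traces back to Lemma~\ref{2klemma_circuits}, which produces a set of $k$ vertices spanning at most $q$ labels (instead of $2k$ vertices spanning $2q$ labels), so the bound in Theorem~\ref{qtheorem_circuits} applies already for $q\leq x$ (rather than $q\leq x/2$), forcing minimum multiplicity $x+1$ (rather than $(x+1)/2$); substituting this into the disjointness count is exactly what makes $\sqrt{k-1}$ the correct cutoff, and I would state this comparison explicitly so the reader sees the parallel with the cycle case.
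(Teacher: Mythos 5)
Your argument is correct and is exactly the adaptation the paper intends: the proof of Theorem~\ref{xtheorem_circuits} is omitted there as being ``similar to the one for cycles,'' and you reproduce the cycle proof of Theorem~\ref{xtheorem} with Theorem~\ref{qtheorem_circuits} in place of Theorem~\ref{qtheorem}, forcing $q\geq x+1$, counting $(x+1)(x+3)\leq k+x$, and deriving the contradiction from $x\geq\sqrt{k-1}$. Your closing remark correctly identifies why the threshold drops from $\sqrt{4k-2}$ to $\sqrt{k-1}$, namely the weaker guarantee ($k$ vertices, $q$ labels) of Lemma~\ref{2klemma_circuits}.
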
 

Finally, considering necessary conditions on the number of arcs between each pair of labels gives an analogue of Theorem \ref{max_nb_edges} for circuits:

\begin{theorem} Let $\overrightarrow{C_n}$ be the circuit of order $n=k+x$, $(x,n) \not \in \{(1,4),(1,6)\}$. For $p \in \mathbb{N}^*$, if $p\leq \lambda^k(\overrightarrow{C_n})$,  there exists a partition $n_1,n_2,...,n_p$ of $n$ into $p$ parts such that $\sum \limits_{i=1}^{p} \lfloor \frac{n_i(n_i-1)}{k} \rfloor + \sum \limits_{i=1}^{p} \sum \limits_{j=i+1}^{p} \lfloor \frac{2n_in_j}{k} \rfloor \geq n$.
\label{max_nb_edges_circuits}
\end{theorem}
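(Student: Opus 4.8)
The plan is to mirror the proof of Theorem~\ref{max_nb_edges} essentially verbatim, replacing ``edge'' by ``arc'' everywhere and keeping track of the one genuine difference: between two distinct vertex classes the complete digraph $\overleftrightarrow{K_n}$ has twice as many arcs as $K_n$ had edges. First I would unfold the hypothesis: since $p\leq\lambda^k(\overrightarrow{C_n})$, by definition there is a $p$-labeled packing $f$ of $k$ copies of $\overrightarrow{C_n}$ into $\overleftrightarrow{K_n}$, realized by injections $\sigma_1,\dots,\sigma_k$ with $f(\sigma_\ell(v))=f(v)$ for all $\ell$ and $v$ and with pairwise disjoint images of the arc sets; the assumption $(x,n)\notin\{(1,4),(1,6)\}$ is needed only so that $\lambda^k(\overrightarrow{C_n})$ is defined, i.e. so that at least one $k$-placement exists. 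Naming the labels $1,\dots,p$ and setting $n_i=|\{v\in V(\overrightarrow{C_n}) : f(v)=i\}|$, each $n_i\geq 1$ and $\sum_{i=1}^p n_i=n$, so $n_1,\dots,n_p$ is the required partition of $n$ into $p$ parts.

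Next I would set up the arc bookkeeping inside a single copy. For $i\in[1,p]$ let $m_i$ be the number of arcs of $\overrightarrow{C_n}$ joining two vertices of label $i$, and for $i<j$ let $m_{i,j}$ be the number of arcs of $\overrightarrow{C_n}$ having one endpoint of label $i$ and the other of label $j$ (in either direction). Every arc of $\overrightarrow{C_n}$ falls into exactly one of these categories, so $\sum_{i=1}^p m_i+\sum_{i=1}^p\sum_{j=i+1}^p m_{i,j}=n$.

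The core step is then the packing inequality. Because $f$ preserves labels, each $\sigma_\ell$ sends the $m_i$ internal arcs of class $i$ to arcs of $\overleftrightarrow{K_n}$ with both endpoints in $\sigma_\ell(V_i)$, and sends the $m_{i,j}$ mixed arcs to arcs between $\sigma_\ell(V_i)$ and $\sigma_\ell(V_j)$. Over the $k$ copies these images are pairwise disjoint. Since a fixed set of $n_i$ vertices spans exactly $n_i(n_i-1)$ arcs of $\overleftrightarrow{K_n}$, and a fixed set of $n_i$ vertices together with a disjoint fixed set of $n_j$ vertices has exactly $2n_in_j$ arcs between them, we get $k\,m_i\leq n_i(n_i-1)$ and $k\,m_{i,j}\leq 2n_in_j$, hence $m_i\leq\lfloor n_i(n_i-1)/k\rfloor$ and $m_{i,j}\leq\lfloor 2n_in_j/k\rfloor$. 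Summing these bounds over all $i$ and all pairs $i<j$ and invoking $\sum m_i+\sum m_{i,j}=n$ gives $\sum_{i=1}^p\lfloor n_i(n_i-1)/k\rfloor+\sum_{i=1}^p\sum_{j=i+1}^p\lfloor 2n_in_j/k\rfloor\geq n$, which is the claim.

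I do not expect a real obstacle: the only point needing attention is the factor $2$ in the mixed term, which arises precisely because $\overleftrightarrow{K_n}$ contains both orientations of every pair (this factor is absent in the undirected case of Theorem~\ref{max_nb_edges}). One should also make sure that the $k$ copies use genuinely disjoint arc sets even when some copies fix vertices, but that is exactly the packing condition $\sigma_\ell^*(E(\overrightarrow{C_n}))\cap\sigma_{\ell'}^*(E(\overrightarrow{C_n}))=\emptyset$ for $\ell\neq\ell'$, so no extra argument is required.
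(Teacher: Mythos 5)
Your proposal is correct and follows exactly the route the paper intends: the paper omits the proof of Theorem~\ref{max_nb_edges_circuits}, stating it is the analogue of Theorem~\ref{max_nb_edges}, and your argument is precisely that proof transposed to arcs, with the counts $n_i(n_i-1)$ within a label class and $2n_in_j$ between classes in $\overleftrightarrow{K_n}$ yielding $m_i\leq\lfloor n_i(n_i-1)/k\rfloor$ and $m_{i,j}\leq\lfloor 2n_in_j/k\rfloor$. No gaps to report.
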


\bibliographystyle{abbrv}
\bibliography{biblio}

\end{document}